\theoremstyle:=definition,remark,plain\do{%
        \expandafter\g@addto@macro\csname th@\theoremstyle\endcsname{%
            \addtolength\thm@preskip\parskip
            }%
        }
\newtheorem{thm}{Theorem}[section]
\newtheorem{cor}[thm]{Corollary}
\newtheorem*{thm:fl_intro}{Theorem~\ref{thm:fl_intro}}
\newtheorem{lemma}[thm]{Lemma}
\newtheorem{obs}[thm]{Observation}
\def\BB{\Lambda}
\def\wt{\widetilde}
\newcommand{\N}{\mathbb{N}}
\newcommand{\powerset}[1]{\operatorname{Pow}(#1)}
\newcommand{\cDP}{\chi_\mathrm{DP}}
\newcommand{\fDP}{f_\mathrm{DP}}
\title{\vspace{-0.5in} A lower bound on the number of edges in DP-critical graphs}
\author{Peter Bradshaw}
\address{Department of Mathematics, University of Illinois Urbana-Champaign--Champaign, Urbana, IL 61801, USA}
\email{pb38@illinois.edu}
\thanks{Peter Bradshaw received support from NSF RTG grant DMS-1937241 and an AMS Simons Travel Grant.}
\author{Ilkyoo Choi}
\address{Hankuk University of Foreign Studies, Yongin-si, Gyeonggi-do, Republic of Korea, and  Discrete Mathematics Group, Institute for Basic Science (IBS), Daejeon, Republic of Korea}
\email{ilkyoo@hufs.ac.kr}
\thanks{Ilkyoo Choi %%% of this author
is supported by the Hankuk University of Foreign Studies Research Fund, the Institute for Basic Science (IBS-R029-C1), and the National Research Foundation of Korea(NRF) grant funded by the Korea government(MSIT) (RS-2025-23324220).}
\author{Alexandr Kostochka}
\address{Department of Mathematics, University of Illinois Urbana-Champaign--Champaign, Urbana, IL 61801, USA}
\email{kostochk@illinois.edu}
\thanks{Alexandr Kostochka 
is supported in part by  NSF  Grant DMS-2153507 and by NSF RTG Grant DMS-1937241.}
\author{Jingwei Xu}
\address{Department of Mathematics, University of Illinois Urbana-Champaign--Champaign, Urbana, IL 61801, USA}
\email{jx6@illinois.edu}
\thanks{Jingwei Xu
is supported in part by   Campus Research Board Award RB24000 of the University of Illinois Urbana-Champaign.}
\begin{document}
\maketitle

\vspace{-0.3in}

\begin{abstract}
A graph $G$ is $k$-critical (list $k$-critical,  DP $k$-critical) if $\chi(G)= k$ ($\chi_\ell(G)= k$, $\cDP(G)= k$) and for every proper subgraph $G'$ of $G$, $\chi(G')<k$ ($\chi_\ell(G')< k$, $\cDP(G')<k$). 
Let $f(n, k)$ ($f_\ell(n, k), \fDP(n,k)$) denote the minimum number of edges in an $n$-vertex
$k$-critical (list $k$-critical, DP $k$-critical) graph. 
Our main result is that if $k\geq 5$ and $n\geq k+2$, then
$$\fDP(n,k)>\left(k - 1 + \left \lceil \frac{k^2 - 7}{2k-7}  \right \rceil^{-1}\right)\frac{n}{2}.
$$
This is the first bound on $\fDP(n,k)$ that is asymptotically better than the well-known bound on $f(n,k)$ by Gallai from 1963.
The result also yields a slightly better bound on $f_{\ell}(n,k)$ than the ones known before.
\vspace{0.15in}
 
\medskip\noindent
{\bf{Mathematics Subject Classification:}}  05C07, 05C15, 05C35.\\
{\bf{Keywords:}}  Color-critical graphs, DP-coloring, sparse graphs.
\end{abstract}

 \section{Introduction}
%Let $[k]$ denote the set $\{1, \ldots, k\}$.
In this paper, a {\em graph} has neither parallel edges nor loops. 
A {\em multigraph} may have parallel edges, but no loops. 
Given a multigraph $G$, let $V(G)$ and $E(G)$ denote the vertex set and edge set, respectively, of $G$. 
Given a vertex $v\in V(G)$, the {\em degree} of $v$, denoted $d_G(v)$, is the number of edges incident with $v$. 
The {\em neighborhood} $N_G(v)$ of a vertex $v$ is the set of vertices adjacent to $v$.
Note that $|N_G(v)| \leq d_G(v)$, and equality holds if and only if $v$ has no incident parallel edges.
A vertex of degree $d$ (at least $d$) is a {\em $d$-vertex} ({\em $d^+$-vertex}). 
% The minimum degree of a multigraph $G$ is denoted by $\delta(G)$.
For vertex subsets $S_1$ and $S_2$, let $E_G(S_1, S_2)$ denote the set of edges $xy \in E(G)$ where $x\in S_1$ and $y\in S_2$. For integers $a, b$ with $a\leq b$, let $[a,b]$ denote the set $\{a, a+1, \ldots, b\}$ of integers.

 \subsection{Known results on proper coloring}

A {\em (proper) $k$-coloring} of a graph $G$ is a mapping $\varphi\,: \,V(G)\to [1,k]$ such that $\varphi(u)\neq \varphi(v)$ for each $uv\in E(G)$.
The {\em chromatic number} of $G$,  denoted $\chi(G)$, is the minimum positive integer $k$ for which $G$ has a proper $k$-coloring.
A graph $G$ is {\em $k$-colorable} if $\chi(G)\leq k$. 
 For a positive integer $k$, a graph $G$ is {\em $k$-critical} if $\chi(G)=k$, but every
proper subgraph of $G$ is $(k-1)$-colorable.

The notion of $k$-critical graphs was introduced and systematically studied by
Dirac~\cite{1951Dirac%,1952DiracAproperty
 ,1952DiracSome,1953Dirac,1957DiracMap,1957DiracAtheorem,
1974Dirac} starting from 1951.  
In particular, Dirac considered the minimum number $f(n,k)$ of edges in an $n$-vertex $k$-critical graph.
With this notation, $f(k,k)=\binom{k}{2}$
and $f(k+1,k)$ is not well defined because there is no $(k+1)$-vertex $k$-critical graph. 
Since every vertex of a $k$-critical graph has degree at least $k-1$, we trivially have $f(n,k)\geq (k-1)\frac{n}{2}$. Even this
 simple bound  yields the Heawood Formula~\cite{Hea}
that every graph $G$ embeddable in an orientable surface $S_{\gamma}$ with
$\gamma\geq 1$ handles satisfies $\chi(G)\leq \lfloor c_\gamma\rfloor$, where 
$c_\gamma= \frac{7+\sqrt{1+48\gamma}}{2}$. The well-known Brooks' Theorem~\cite{Bro} implies that for $k\geq 4$ and $n\geq k+2$, $f(n,k)\geq \frac{(k-1)n+1}{2}$.
Dirac sharpened Heawood's result by showing that for $\gamma\geq 1$, every graph embeddable in $S_{\gamma}$ with chromatic number $\lfloor c_{\gamma}\rfloor$ contains the complete graph with $\lfloor c_{\gamma}\rfloor$ vertices as a subgraph. 
 For this, he used the following lower bound on $f(n,k)$:

\begin{thm}[Dirac~\cite{1957DiracAtheorem}]\label{D2}
If $k\geq 4$ and $n \geq k+2$, 
%and $G$ is an $n$-vertex $k$-critical graph, 
then 
\begin{equation}\label{Di1}
f(n,k)\geq \left(k-1\right)\frac{n}{2}+\frac{k-3}{2}.
%2|E(G)|\geq (k-1)n+k-3.
 \end{equation}
\end{thm}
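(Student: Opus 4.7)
The plan is to upgrade the trivial inequality $2|E(G)|\ge (k-1)n$, which follows from the standard fact that every $k$-critical graph has minimum degree at least $k-1$, by exhibiting an extra contribution of $k-3$ to the degree sum. Let $A=\{v:d_G(v)=k-1\}$ and $B=\{v:d_G(v)\ge k\}$, so that
\[
2|E(G)| \;=\; (k-1)n \;+\; \sum_{v\in B}\bigl(d_G(v)-(k-1)\bigr).
\]
It therefore suffices to prove the ``excess'' bound $\sum_{v\in B}(d_G(v)-(k-1))\ge k-3$.

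My first step would be to record structural information about the low-degree set $A$. Let $H:=G[A]$. By Gallai's classical structural result for color-critical graphs, each block of $H$ is either a clique or an odd cycle. Because $G$ is $k$-critical, connected, and distinct from $K_k$ (using $n\ge k+2$), no component of $H$ can be isomorphic to $K_k$: such a component would carry every edge at each of its vertices and hence be isolated in $G$. Combined with Brooks' theorem and the assumption $k\ge 4$, this shows that every component of $H$ admits a proper $(k-1)$-coloring. If $B$ were empty, pasting these colorings would yield a $(k-1)$-coloring of $G$, contradicting $\chi(G)=k$; hence $|B|\ge 1$, which already gives $2|E(G)|\ge (k-1)n+1$.

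The remaining task is to promote $|B|\ge 1$ into the full excess bound $k-3$. I would split into cases on $|B|$. When $|B|\ge k-3$, each vertex of $B$ contributes at least one unit of excess and the bound is immediate. When $s:=|B|\le k-4$, the plan is to force some $v\in B$ to carry much larger excess via a Kempe-chain argument on a proper $(k-1)$-coloring $\phi$ of $G-v$, guaranteed by criticality: every color must appear on $N_G(v)$, so $d_G(v)\ge k-1$, and if some color $c$ appeared on a unique neighbor $u\in N_G(v)$, then swapping colors $c$ and $c'$ along the $(c,c')$-Kempe component at $u$—for a color $c'$ appearing more than once on $N_G(v)$—would free $c$ at $u$ and extend $\phi$ to all of $G$, contradicting $\chi(G)=k$. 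Carrying this through over choices of $c,c'$ and vertices $v\in B$, and using the Gallai-tree structure of $H$ together with the scarcity of edges from $A$ to $B$ guaranteed by $s\le k-4$, should force the required excess.

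The main obstacle is controlling Kempe-chain ``collisions'': the $(c,c')$-component containing $u$ may also contain another neighbor of $v$ of color $c'$, blocking the intended swap. In this small-$|B|$ regime, such failures must be ruled out either directly from the clique/odd-cycle block structure of $H$ or by performing successive swaps and alternating across colorings of $G-v'$ for different $v'\in B$. Once this local-to-global consolidation is handled, the two cases combine to give $\sum_{v\in B}(d_G(v)-(k-1))\ge k-3$, and hence $2|E(G)|\ge (k-1)n+(k-3)$, as required.
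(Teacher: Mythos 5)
The reduction $2|E(G)| = (k-1)n + \sum_{v\in B}\bigl(d_G(v)-(k-1)\bigr)$ and the observation that $|B|\ge 1$ (via Brooks) are fine, and the case $|B|\ge k-3$ is indeed immediate. The genuine gap is the case $s=|B|\le k-4$: you sketch a Kempe-chain plan but do not carry it through, and you explicitly flag that you cannot yet control the collisions (``\ldots must be ruled out either directly\ldots or by performing successive swaps\ldots Once this local-to-global consolidation is handled\ldots''). That is not a proof; it is a to-do list, and the hardest part of the theorem lives exactly there. Moreover, the target in that regime is quantitative: with, say, $|B|=1$, you must show that the unique high vertex has degree at least $2k-4$, and nothing in the Kempe-chain sketch explains how swaps convert into a degree lower bound of that strength for a single vertex of $B$.

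There is also a conceptual mismatch. The Kempe-chain machinery you invoke is precisely what is used to \emph{prove} Gallai's structure theorem (Theorem~\ref{Ga2} in the paper), which you are already taking for granted; running it again on a coloring of $G-v$ gives back the same kind of structural information about $N(v)$, not new degree excess. The classical route to Dirac's bound instead \emph{uses} the Gallai-forest structure of $G[A]$ together with the $(k-1)$-edge-connectivity of a $k$-critical graph: for each component $C$ of $G[A]$ one bounds $|E(C,B)|$ from below (since $C$ is a Gallai tree of maximum degree at most $k-1$ and is not $K_k$, its internal edges cannot absorb all of the degree $k-1$ at each vertex, and edge-connectivity forces at least $k-1$ edges out of $C$); summing over components bounds $|E(A,B)|$, which feeds directly into the excess $\sum_{v\in B}(d_G(v)-(k-1)) = |E(A,B)|+2|E(G[B])|-(k-1)|B|$. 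Replacing your Kempe-chain paragraph by this edge-counting argument is the natural way to close the gap.
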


Fifteen years later, Dirac~\cite{1974Dirac} described the $k$-critical graphs for which the bound~\eqref{Di1} is exact.

In his fundamental papers~\cite{1963Gallai1} and~\cite{1963Gallai2} from 1963, Gallai proved a series of important properties of color-critical graphs. In particular, he found the exact values of $f(n,k)$ for
all $n\in[k+2, 2k-1]$.

Recall that
 a {\em block} in a graph $G$ is a maximal 
connected subgraph of $G$ that has no cut vertices. 
A {\em Gallai tree} is a connected graph whose every block is a complete graph or an odd cycle, and a~\emph{Gallai forest} is a graph in which every connected component is a Gallai tree.

\begin{thm}[Gallai~\cite{1963Gallai2}]\label{Ga2}
Let $k\geq 4$, and let $G$ be a $k$-critical graph. If $B$ is the set of $(k-1)$-vertices in $G$,
then $G[B]$ is a Gallai forest.
\end{thm}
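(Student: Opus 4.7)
The plan is to derive a contradiction from $k$-criticality by producing a proper $(k-1)$-coloring of $G$ whenever $G[B]$ fails to be a Gallai forest, using list coloring as the main tool.

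Suppose for contradiction that some connected component $C$ of $G[B]$ is not a Gallai tree, and set $S = V(C) \subseteq B$. Since $G$ is $k$-critical and $S \neq \emptyset$, the proper subgraph $G - S$ admits a proper $(k-1)$-coloring $\phi$. For each $v \in S$, I would define
$$L(v) = \{1, \ldots, k-1\} \setminus \phi(N_G(v) \setminus S).$$
Because $v \in B$ satisfies $d_G(v) = k-1$ and $d_C(v) = |N_G(v) \cap S|$, we obtain
$$|L(v)| \geq (k-1) - (d_G(v) - d_C(v)) = d_C(v),$$
so $L$ is a list assignment on $C$ with $|L(v)| \geq d_C(v)$ at every vertex.

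The key step is to invoke the degree-choosability characterization of Borodin and of Erd\H{o}s--Rubin--Taylor: a connected graph is $L$-choosable for every list assignment $L$ with $|L(v)| \geq d(v)$ unless every block is a complete graph or an odd cycle, that is, unless it is a Gallai tree. Applying this to $C$, which is not a Gallai tree, yields a proper $L$-coloring of $C$, and this together with $\phi$ extends to a proper $(k-1)$-coloring of $G$, contradicting $\chi(G) = k$.

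The main obstacle is the degree-choosability theorem itself, which postdates Gallai's 1963 paper. A self-contained (Gallai-style) argument would instead take a minimum counterexample, locate a non-Gallai-tree block $H_0$ inside $G[B]$, argue that $H_0$ is $2$-connected with $\delta(H_0) \geq 2$, and then exploit the structural fact that a $2$-connected graph which is neither complete nor an odd cycle contains a suitable configuration (for instance, an even cycle with a chord, or two odd cycles sharing a vertex or joined by a path) that admits a partial coloring extendible greedily to the rest of $H_0$ and then to all of $G$. The delicate point is ordering the vertices so that each one sees a free color when it is processed, which is precisely the content of the Borodin/Erd\H{o}s--Rubin--Taylor result; in practice one simply cites it.
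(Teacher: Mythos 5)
The paper does not prove this theorem; it simply cites Gallai's 1963 paper, so there is no ``paper's own proof'' to compare against. Your argument is correct, and it is precisely the derivation the authors implicitly endorse when they remark that Theorem~\ref{theo:list_Brooks} ``can be viewed as a generalization of Theorem~\ref{Ga2}'': take a component $C$ of $G[B]$ that is not a Gallai tree (it is nonempty, so $G-V(C)$ is a proper subgraph), color $G-V(C)$ with $k-1$ colors by criticality, form the induced degree list assignment $L$ on $C$ exactly as you do, apply Theorem~\ref{theo:list_Brooks} to get an $L$-coloring of $C$, and merge to obtain a proper $(k-1)$-coloring of $G$, contradicting $\chi(G)=k$. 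Two small points worth making explicit. First, the boundary case $V(C)=V(G)$ (every vertex has degree $k-1$) is fine: then $G-V(C)$ is empty, $L(v)=\{1,\dots,k-1\}$ for all $v$, and $L$-colorability of $C=G$ contradicts $\chi(G)=k$ directly. Second, your closing paragraph is accurate as a historical remark: Gallai's original 1963 proof necessarily did not cite Borodin/Erd\H{o}s--Rubin--Taylor but instead analyzed $2$-connected non-Gallai-tree blocks directly (finding an even cycle with a chord, or a theta subgraph, etc.), which is essentially the combinatorial core that the 1979/1980 degree-choosability theorem later abstracted; citing the latter, as you do, is the standard modern shortcut.
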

Gallai~\cite{1963Gallai1} showed that his 
Theorem~\ref{Ga2}  implies the following lower bound on $f(n,k)$:
%\begin{thm}[Gallai~\cite{1963Gallai1}]\label{Ga4}
if $k \geq 4$ and $n\geq k+2$, then
\begin{equation}\label{in3}
f(n,k)\geq \left({k-1}+\frac{k-3}{k^2-3}\right)\frac{n}{2}.
\end{equation}
%\end{thm}
For large $n$, this bound is much stronger than~\eqref{Di1}.
%the bound in Theorem~\ref{D2}.

Krivelevich~\cite{1997Krivelevich,1998Krivelevich} improved the lower bound on $f(n, k)$ in~\eqref{in3} %Theorem~\ref{Ga4} 
 to
\begin{equation}\label{in5}
f(n,k)\geq \left(k-1+\frac{k-3}{k^2-2k-1}\right)\frac{n}{2}
\end{equation}
and demonstrated nice applications of his bound.
Then, Kostochka and Stiebitz~\cite{2003KS} proved that for $k\geq 6$ and $n\geq k+2$,
\begin{equation}\label{in6}
f(n,k)\geq \left(k-1+\frac{2(k-3)}{k^2+6k-9-6/(k-2)}\right)\frac{n}{2}.
\end{equation}
Note that the last term in the bound~\eqref{in6} is asymptotically (in $k$) twice larger than the one in~\eqref{in3}. Then, Kostochka and Yancey~\cite{2014KoYa} proved an asymptotically exact bound:
%\begin{thm} [Kostochka and Yancey~\cite{2014KoYa}] \label{k-critical}
if  $k\geq 4$ and $n\geq k,\,n\neq k+1$, then
\begin{equation}\label{KYa}
    f(n,k)\geq   \left(k-1+\frac{k-3}{k-1}\right)\frac{n}{2}-
\frac{k(k-3)}{2(k-1)}.
\end{equation}
This bound on $f(n,4)$ yields a short proof of the well-known theorem by Gr\" otzsch~\cite{Gro} that every triangle-free planar graph is $3$-colorable.
%\end{thm}
%In this bound, the last term in the coefficient of $\frac n2$
%is asymptotically (in $k$) $k$ times larger than the one in~\eqref{in3}.

 \subsection{Known results on list coloring}

\emph{List coloring} was introduced independently by Vizing~\cite{1976Vizing} and Erd\H os, Rubin, and Taylor~\cite{1980ErRuTa}. 
For a set $X$, let $\powerset{X}$ denote the power set of a set $X$, and denote $\bigcup_{v\in X}f(v)$ by $f(X)$.
A \emph{list assignment} for a graph $G$ is a function $L \colon V(G) \to \powerset{Y}$, where $Y$ is a set, whose elements are referred to as \emph{colors}. 
For each $u \in V(G)$, the set $L(u)$ is called the \emph{list} of $u$ and its elements are said to be \emph{available} for~$u$. 
A proper coloring $\varphi \colon V(G) \to Y$ is called an \emph{$L$-coloring} if  $\varphi(u) \in L(u)$ for each $u \in V(G)$. 
A graph $G$ with a list assignment $L$ is said to be \emph{$L$-colorable} if it admits an $L$-coloring. 
The \emph{list chromatic number} $\chi_\ell(G)$ of $G$  is the least positive integer $k$ such that $G$ is $L$-colorable whenever $L$ is a list assignment for $G$ with $|L(u)| \geq k$ for all $u \in V(G)$. 
If $L(u) = [1,k]$ for a positive integer $k$ for all $u \in V(G)$, then $G$ is $L$-colorable if and only if it is $k$-colorable; in this sense, list coloring generalizes proper coloring. In particular, $\chi_\ell(G) \geq \chi(G)$ for all graphs $G$.

The definition of critical graphs can be naturally extended to the list coloring setting. 
A graph $G$ is {\em list $k$-critical} if $\chi_\ell(G) = k$, but
$\chi_\ell(G') \leq k-1$ for each proper subgraph $G'$ of $G$.
So, we can define $f_\ell(n,k)$  to be the minimum number of edges in an $n$-vertex list $k$-critical graph.

A list assignment $L$ for a graph $G$ is called a \emph{degree-list assignment} if $|L(u)| \geq d_G(u)$ for all $u \in V(G)$. A fundamental result of Borodin~\cite{1979Borodin} and Erd\H os, Rubin, and Taylor~\cite{1980ErRuTa} provides a complete characterization of all graphs that are not $L$-colorable with respect to some degree-list assignment $L$. 
This result can be viewed as a generalization of Theorem~\ref{Ga2}.
	
\begin{thm}[Borodin~\cite{1979Borodin}; Erd\H os, Rubin, and Taylor~\cite{1980ErRuTa}]\label{theo:list_Brooks}
Let $G$ be a connected graph and $L$ be a degree-list assignment for $G$. If $G$ is not $L$-colorable, then $G$ is a Gallai tree; furthermore, $|L(u)| = d_G(u)$ for all $u \in V(G)$ and if $u, v \in V(G)$ are two adjacent non-cut vertices, then $L(u) = L(v)$.
	\end{thm}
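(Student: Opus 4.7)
The plan is to prove all three conclusions by induction on $|V(G)|$. The base case $|V(G)|=1$ is immediate: non-$L$-colorability forces $|L(v)|=0=d_G(v)$, and $K_1$ is a Gallai tree. For the inductive step I first reduce to the case $|L(u)|=d_G(u)$ for every $u\in V(G)$. Suppose some $v$ satisfies $|L(v)|>d_G(v)$. Since $G$ is connected and $|V(G)|\geq 2$, each component $C$ of $G-v$ contains a neighbor $w$ of $v$, and $d_C(w)\leq d_G(w)-1<|L(w)|$. The contrapositive of the inductive hypothesis applied to $C$ forces $C$ to be $L$-colorable; gluing these colorings and extending to $v$ (for which fewer than $|L(v)|$ colors are forbidden) produces an $L$-coloring of $G$, a contradiction.

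Next I would establish list-equality for adjacent non-cut vertices. Suppose $u,w$ are both non-cut in $G$, $uw\in E(G)$, and $c\in L(u)\setminus L(w)$. Since $u$ is non-cut, $G-u$ is connected. Define $L'$ on $V(G)\setminus\{u\}$ by $L'(x)=L(x)\setminus\{c\}$ for $x\in N_G(u)$ and $L'(x)=L(x)$ otherwise. Then $L'$ is a degree list assignment on $G-u$, and at $w$ the fact $c\notin L(w)$ gives $L'(w)=L(w)$ and $|L'(w)|=d_G(w)>d_{G-u}(w)$, so $w$ is flexible in $G-u$. By the first reduction, $G-u$ is $L'$-colorable; since no neighbor of $u$ uses color $c$ in such a coloring, extending by $u\mapsto c$ produces an $L$-coloring of $G$, a contradiction.

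Finally, I would show each block of $G$ is a complete graph or odd cycle. Assume first that $G$ has a cut vertex and fix an endblock $B$ with cut vertex $v$; set $G'=G-(V(B)\setminus\{v\})$. Both $B$ and $G'$ are connected, strictly smaller than $G$, and have $v$ as a flexible vertex since $|L(v)|=d_G(v)=d_B(v)+d_{G'}(v)$. Let $A_B$ (resp.\ $A_{G'}$) be the set of $c\in L(v)$ for which $B$ (resp.\ $G'$) admits an $L$-coloring with $v$ colored $c$. Any $L$-coloring of $B-v$ uses at most $d_B(v)$ colors on $N_B(v)$, giving $|A_B|\geq|L(v)|-d_B(v)$, and similarly $|A_{G'}|\geq|L(v)|-d_{G'}(v)$, so $|A_B|+|A_{G'}|\geq|L(v)|$. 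Non-$L$-colorability of $G$ forces $A_B\cap A_{G'}=\emptyset$, so $A_B$ and $A_{G'}$ partition $L(v)$. Setting $L^*(v)=L(v)\setminus A_B$ and $L^*(u)=L(u)$ for $u\ne v$ gives a degree list assignment on $B$ with $|L^*(v)|=d_B(v)$ under which $B$ is not $L^*$-colorable; by induction $B$ is a Gallai tree, and being 2-connected it is a complete graph or odd cycle. When $G$ is itself 2-connected, paragraph 2 combined with transitivity of list-equality along paths gives a common list $L_0$ of size $d_G(v)=:d$ at every vertex; if $G$ is neither a complete graph nor an odd cycle, a classical Brooks-type structural lemma supplies $x,y,z$ with $xy,xz\in E(G)$, $yz\notin E(G)$, and $G-\{y,z\}$ connected, and coloring $y,z$ with a common color in $L_0$ followed by a greedy ordering ending at $x$ produces an $L$-coloring of $G$, a contradiction. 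The main obstacle is the counting step in the endblock reduction: converting the extremality $|A_B|+|A_{G'}|=|L(v)|$ into a non-$L^*$-colorable degree list assignment on $B$ that is tight enough to feed into the inductive hypothesis and produce the Gallai structure.
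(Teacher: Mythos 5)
The paper cites this theorem from Borodin and from Erd\H{o}s--Rubin--Taylor without giving a proof, so there is no in-paper argument to compare against; I will evaluate your proof on its own terms.

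Your inductive argument is essentially correct and follows the standard textbook route. In particular, the step you flag as the ``main obstacle'' is in fact not an obstacle: with $A_B$ and $A_{G'}$ disjoint and $|A_B|\geq |L(v)|-d_B(v)$, $|A_{G'}|\geq |L(v)|-d_{G'}(v)$, together with $d_B(v)+d_{G'}(v)=d_G(v)=|L(v)|$ and $A_B\cup A_{G'}\subseteq L(v)$, both inequalities must be equalities and $A_B\cup A_{G'}=L(v)$. Hence $L^*(v)=A_{G'}$ has size exactly $d_B(v)$, every interior vertex $u$ of $B$ already has $|L(u)|=d_G(u)=d_B(u)$, and $B$ is not $L^*$-colorable since any $L^*$-coloring would color $v$ in $A_{G'}\cap A_B=\emptyset$; so the inductive hypothesis applies cleanly to $(B,L^*)$. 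Two small points you should make explicit. First, by symmetry you also need to apply the endblock argument to $(G',L^{**})$ with $L^{**}(v)=A_B$ to conclude $G'$ is a Gallai tree, and then $G=B\cup G'$ glued at $v$ is a Gallai tree; as written you only extract the structure of $B$. Second, in proving $L(u)=L(w)$ your argument literally shows $L(u)\setminus L(w)=\emptyset$; you should add the one-line remark that, by the symmetry of $u$ and $w$ (both non-cut), $L(w)\setminus L(u)=\emptyset$ as well, which is needed since you cannot yet assume $d_G(u)=d_G(w)$. With these two remarks supplied, the proof is complete and correct.
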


Kostochka, Stiebitz, and Wirth~\cite{KSW} and independently Thomassen~\cite{Tho} described for each Gallai tree $G$ the degree-list assignments $L$    such that $G$ is not $L$-colorable.
 Kostochka and  Stiebitz~\cite{KS02} proved that for $n \geq k+2$, Dirac’s bound (1) holds  for $f_\ell(n, k)$. 

Exactly as Theorem~\ref{Ga2} implies~\eqref{in3}, Theorem~\ref{theo:list_Brooks} yields the same lower bound on~$f_\ell(n,k)$. 
Kostochka and Stiebitz~\cite{2003KS} improved~\eqref{in3} for $k\geq 9$: the last term in their bound is asymptotically (in $k$) $1.5$ times larger than the one in~\eqref{in3}.
Then, in a series of papers by Kierstead and Rabern~\cite{2020KiRa}, 
Cranston and Rabern~\cite{2018CrRa}, and Rabern~\cite{2016Rabern,2018Rabern}, 
this bound was significantly improved and also extended to $4\leq k\leq 8$. 
The best known bounds are due to Rabern~\cite{2016Rabern,2018Rabern}:

\begin{thm}[Rabern~\cite{2016Rabern,2018Rabern}]
\label{Cr-Ra}
For $k\geq 4$ and $n\geq k+2$, 
\[\displaystyle f_\ell(n,k)\geq
\begin{cases}
     \displaystyle\left(k-1+\frac{(k-3)^2(2k-3)}{k^4-2k^3-11k^2+28k-14}\right)\frac{n}{2} & \mbox{ if } k\geq 7\\[2ex]
    \displaystyle\left(5+\frac{93}{766}\right)\frac{n}{2}& \mbox{ if } k=6\\[2ex]
    \displaystyle\left(k-1+\frac{k-3}{k^2-2k+2}\right)\frac{n}{2}& \mbox{ if } k\in\{4,5\}\\
\end{cases}\]
\end{thm}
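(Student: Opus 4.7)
The plan is to attack this via a discharging argument refining the classical bound coming from Theorem~\ref{theo:list_Brooks}, in the spirit of Gallai, Krivelevich, and Kostochka--Stiebitz. Let $G$ be an edge-minimum list $k$-critical graph on $n \geq k+2$ vertices. Criticality forces $\delta(G) \geq k - 1$, so call vertices of degree exactly $k-1$ \emph{low} and let $B \subseteq V(G)$ denote this set; write $H = V(G) \setminus B$ for the \emph{high} vertices. Applying Theorem~\ref{theo:list_Brooks} to an appropriate degree-list assignment on $G[B]$ yields the first structural input: $G[B]$ is a Gallai forest, and within each block the lists of non-cut vertices must all coincide.

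The second, more delicate step is to extract sharper restrictions on how each Gallai-tree component $T$ of $G[B]$ can attach to $H$. The heuristic is that if $|E_G(T,H)|$ is too small relative to $|V(T)|$, then one can build a $(k-1)$-list assignment on $G$ that extends any proper coloring of $G-T$: inside the block structure of $T$ the lists can be synchronized enough (by the ``non-cut vertices share lists'' conclusion) that the resulting obstruction is itself a Gallai tree, contradicting the list-Brooks theorem or a paintability variant of it. Translating these obstructions block-by-block and over the different possible block types (complete graphs $K_j$ for $3 \leq j \leq k-1$ and odd cycles of various lengths) produces linear inequalities of the form $|E_G(T,H)| \geq \alpha_j |V(T) \cap B_j| - \beta_j$ for each block type. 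The exact optimum of the LP that combines these inequalities is what produces the unusual-looking rational coefficient $\frac{(k-3)^2(2k-3)}{k^4 - 2k^3 - 11k^2 + 28k - 14}$.

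With these bounds in hand, I would run the discharging. Assign each vertex $v$ the initial charge $\mu(v) = d_G(v)$, so $\sum_v \mu(v) = 2|E(G)|$. Let high vertices send a fixed amount $\tau$ across each edge of $E_G(H,B)$, chosen so that (a) each high vertex retains charge at least $k - 1 + c$, and (b) after a further internal redistribution of received charge inside each Gallai-tree component of $G[B]$ (weighted by block type), every low vertex also ends with charge at least $k - 1 + c$. Summing gives $2|E(G)| \geq (k - 1 + c)\,n$, the claimed bound.

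The main obstacle, and the real content of Rabern's contribution over Krivelevich and Kostochka--Stiebitz, is step two: obtaining tight per-component lower bounds on $|E_G(T,H)|$ that are strong enough to survive the LP optimization and yield the stated coefficient rather than a weaker one. I expect the small cases $k = 6$ and $k \in \{4,5\}$ to require separate ad-hoc analysis, since for small $k$ the Gallai-forest structure admits proportionally more exceptional blocks (e.g.\ triangles, $C_5$), so the general-$k$ LP does not extrapolate cleanly and must be solved by hand, explaining the jump to $(5 + 93/766)\,n/2$ at $k = 6$ and the different closed form for $k \in \{4,5\}$.
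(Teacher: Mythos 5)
The paper does not prove Theorem~\ref{Cr-Ra} at all: it is a background result, stated with citations to Rabern's papers~\cite{2016Rabern,2018Rabern} and used only to contextualize the new bounds (see Table~1). So there is no ``paper's own proof'' to compare against. What the paper \emph{does} prove is a \emph{stronger} lower bound on $f_\ell(n,k)$ for $k \geq 5$, namely Corollary~\ref{cor62}, and it does so by a genuinely different route: it first establishes the bound for DP $k$-critical graphs (Theorem~\ref{thm:stronger}) by working with a weighted potential function on multigraphs, exploiting that low vertices induce a GDP-forest (Lemma~\ref{lem:ERT}), ruling out $(k-1)$- and $(k-2)$-regular blocks (Lemmas~\ref{lem:k-1-reg},~\ref{Nothing}), and discharging; it then transfers the bound to list coloring via Theorem~\ref{thm:HLcrit}. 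None of this overlaps with your proposal, because your proposal tries to reprove Rabern's theorem directly in the list-coloring setting.

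Evaluated on its own terms, your proposal is an outline of the Krivelevich/Kostochka--Stiebitz/Rabern framework (Gallai forest on the low vertices, per-component edge bounds, discharging), but the central step is missing. You assert that if $|E_G(T,H)|$ is small relative to $|V(T)|$ then ``one can build a $(k-1)$-list assignment on $G$ that extends any proper coloring of $G-T$.'' As written this is circular: after deleting the colors used in $H$, each low vertex $v \in V(T)$ has a residual list of size at least $d_T(v)$, which is precisely the degree-list setting of Theorem~\ref{theo:list_Brooks}, and the conclusion is exactly that $T$ is a Gallai tree --- which it already is. That does not give any lower bound on $|E_G(T,H)|$ beyond what~\eqref{in3} already encodes. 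The actual content of the Kostochka--Stiebitz and Rabern improvements is a sharper reducibility lemma (Rabern's versions run through kernel-perfect orientations and the Alon--Tarsi/online-list-coloring machinery, plus a careful accounting via a maximum independent set of low vertices) that forces stronger inequalities on $|E_G(T,H)|$ than Theorem~\ref{theo:list_Brooks} alone. Without stating and proving such a lemma, the LP you invoke has no input, and the discharging parameters $\tau$ and $c$ cannot be determined. You correctly guess that $k \in \{4,5\}$ and $k=6$ need separate treatment, but the reason for the boundary at $k=7$ is tied to exactly the missing lemma, so that observation cannot be substantiated either.
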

In fact,  the bounds were proved not only for $ f_\ell(n,k)$ but also for the analogous parameter of paintability,
which is defined in terms of an on-line list coloring process where available colors are revealed one at a time (see~\cite{2016Rabern,2018Rabern} for a complete definition).
%whose definition can be found in~\cite{2016Rabern,2018Rabern}.
The last term in Theorem~\ref{Cr-Ra} is
asymptotically (in $k$) twice 
larger than the one in~\eqref{in3}.
Below is a table similar to ones in~\cite{2018CrRa,2020KiRa,2016Rabern} with an added column of our results in this paper.

% \setlength\tabcolsep{3pt} % learn new things every day, thanks peter
%I didn't know about this either until I googled it haha
\begin{table}[h!]
\centering
\begin{tabular}{|r||c|c||c|c|c|c||c|} % fix alignment later
 \hline
 & \multicolumn{2}{c||}{$k$-critical} 
 & \multicolumn{4}{c||}{list $k$-critical} & 
  list and DP $k$-critical  \\
 \hline
$k$ &Ga~\cite{1963Gallai1}  & KY~\cite{2014KoYa} & KS~\cite{2003KS} & KR~\cite{2020KiRa} &CR~\cite{2018CrRa} & Ra~\cite{2016Rabern,2018Rabern} & This paper \\ %[0.5ex] 
 \hline\hline
4 & \phantom{0}3.0769 & \phantom{0}3.3333 &   &   &   & \phantom{0}3.1000$^*$ &   \\ 
5 & \phantom{0}4.0909 & \phantom{0}4.5000 &   & \phantom{0}4.0983 & \phantom{0}4.1000 & \phantom{0}4.1176 & \phantom{0}4.1666$^*$ \\ 
6 & \phantom{0}5.0909 & \phantom{0}5.6000 &   & \phantom{0}5.1052 & \phantom{0}5.1076 & \phantom{0}5.1214 & \phantom{0}5.1666$^*$ \\ 
7 & \phantom{0}6.0869 & \phantom{0}6.6666 &   & \phantom{0}6.1149 & \phantom{0}6.1192 & \phantom{0}6.1296 & \phantom{0}6.1666$^*$ \\ 
8 & \phantom{0}7.0819 & \phantom{0}7.7142 &   & \phantom{0}7.1127 & \phantom{0}7.1167 & \phantom{0}7.1260 & \phantom{0}7.1428$^*$ \\ 
9 & \phantom{0}8.0769 & \phantom{0}8.7500 & \phantom{0}8.0838 & \phantom{0}8.1093 & \phantom{0}8.1130 & \phantom{0}8.1213 & \phantom{0}8.1428$^*$ \\ 
10 & \phantom{0}9.0721 & \phantom{0}9.7777 & \phantom{0}9.0793 & \phantom{0}9.1054 & \phantom{0}9.1088 & \phantom{0}9.1162 & \phantom{0}9.1250$^*$ \\ 
15 & 14.0540 & 14.8571 & 14.0610 & 14.0863 & 14.0884 & 14.0930 & 14.1000$^*$ \\ 
20 & 19.0428 & 19.8947 & 19.0490 & 19.0718 & 19.0733 & 19.0762 & 19.0833$^*$ \\ 
\hline 
\end{tabular}
\caption{Current lower bounds on the average degree of $k$-critical and list $k$-critical graphs. 
Best results for list $k$-critical graphs are marked with an asterisk.}
\label{table:1}
\end{table}

\subsection{Known results on DP-coloring and our results}

In this paper we focus on a generalization of list coloring that was recently introduced by Dvo\v r\' ak and Postle~\cite{2018DvPo}; 
they called it \emph{correspondence   coloring}, and we call it \emph{DP-coloring} for short. 
Dvo\v r\' ak and Postle invented DP-coloring in order to approach an open problem about list colorings of planar graphs with no cycles of certain lengths.
This hints at the usefulness of DP-coloring for graph coloring problems, in particular list coloring problems. 

For a multigraph $G$, a \emph{(DP-)cover} of $G$ is a pair $(H, L)$, where $H$ is a multigraph and $L \colon V(G) \to \powerset{V(H)}$ is a function such that 
\begin{itemize}
    \item The family $\{L(u) : u \in V(G)\}$ forms a partition of $V(H)$. 
    \item For each $u \in V(G)$, $H[L(u)]$ is an independent set.
%    \item If $u$, $v \in V(G)$ and $L(v) \cap N_H(L(u)) \neq \emptyset$, then $v \in \set{u} \cup N_G(u)$. %either $u = v$, or else, $uv \in E(G)$;  
    \item For each $u,v \in V(G)$, if $|E_G(u,v)|=s$, then $E_H(L(u), L(v))$ is the union of $s$ matchings (where each matching is not necessarily perfect and possibly empty). 
\end{itemize}
% {\PB The third item was implied by the fourth one so I commented out the third one.}
An $(H,L)$\emph{-coloring} of $G$ is a function
$\phi:V(G)\rightarrow \bigcup_{v\in V(G)}L(v)$
with $\phi(v)\in L(v)$ such that $\{(v,\phi(v)):v\in V(G)\}$ is independent in $H$. In view of this definition, we often refer to the vertices of $H$ as \emph{colors}.
%A multigraph $G$ with a cover $(H,L)$ has an \emph{$(H,L)$-coloring} if $H$ has an independent set containing exactly one vertex from $L(v)$ for each $v \in V(G)$.
%Equivalently, an independent set $I$ of $H$ is an $(H,L)$-coloring of $G$ if $I \cap L(u) \neq \emptyset$ for all $u \in V(G)$.
The \emph{DP chromatic number} $\cDP(G)$ of a multigraph $G$  is the least positive integer $k$ such that $G$ has an $(H,L)$-coloring
whenever $(H,L)$ is a cover of $G$ with $|L(u)| \geq k$ for all $u \in V(G)$.
Every list coloring problem can be represented as a DP-coloring problem.
In particular, $\cDP(G) \geq \chi_\ell(G)$ for all multigraphs $G$.
%Let $G$ be a graph and $(H,L)$ be a cover of $G$. 

A cover $(H, L)$ of a multigraph $G$ is called a \emph{degree-cover} if $|L(u)| \geq d_G(u)$ for all $u \in V(G)$. 
We say a multigraph $G$ is \emph{DP degree-colorable} if $G$ has an $(H, L)$-coloring whenever $(H, L)$ is a degree-cover.
% of $G$ with a degree list $L$.
%with $|L(u)|\geq d_G(u)$ for all $u\in V(G)$. 

A multigraph $G$ is \emph{DP $k$-critical} if $\cDP(G) = k$ and $\cDP(G') \leq k-1$ for every proper subgraph $G'$ of $G$.
Let $\fDP(n,k)$ denote the minimum number of edges in an $n$-vertex DP $k$-critical  graph.

For a graph $G$ and a positive integer $s$, the {\em multiple} $s G$ of $G$ is the multigraph obtained from $G$ by replacing each edge  $e \in E(G)$
with $s$  edges joining the endpoints of $e$. 
In particular, $1 G=G$.
A \emph{GDP-forest} is a multigraph such that for every block $B$, there exist $n$ and $t$ such that
$B$ is either a $t K_n$ or a $t C_n$.
 (A {\em double cycle} is a multigraph $2 C_n$.)
A {\em GDP-tree} is a connected GDP-forest. 
Note that every Gallai tree is also a GDP-tree.
An example of a GDP-tree is shown in Figure~\ref{fig:Gallai-tree}.

\begin{figure}
\begin{center}
\begin{tikzpicture}
[scale=1.2,auto=left,every node/.style={circle,fill=gray!30,minimum size = 6pt,inner sep=0pt}]

\node(v4) at (0,1) [draw = black] {};
\node(v1) at (0,1.8) [draw = black] {};
\node(v2) at (-0.4,1.4) [draw = black] {};
\node(v3) at (0.4,1.4) [draw = black] {};
\node(p1) at ($(-0.4,1.4) + (-1,0.577) $)[draw = black] {};
\node(p3) at ($(-0.4,1.4) + (-1,-0.577) $)[draw = black] {};
\node(w) at (0,2.6) [draw = black] {};
\foreach \i in {1,...,7}
    \node[draw = black] (q\i) at ($ (1.4-0.4,1.4) + ({180+360/8*\i}:0.6) $)  {};
\node(r) at ($(p1) + (-0.8,0)$) [draw=black] {};
\node(x1) at ($(r) + (-0.6,0.2)$) [draw = black] {};
\node(x2) at ($(r) + (-0.6,-0.2)$) [draw = black] {};

\node(y1) at ($(p3) + (-0.6,0.2)$) [draw = black] {};
\node(y2) at ($(p3) + (-0.6,-0.2)$) [draw = black] {};
%\node(h1) at (-1,0) [draw=black] {};
%\node(h2) at (1,0) [draw = black] {};
\foreach \from/\to in {r/p1,p1/v2,p1/p3,v2/p3,v3/q1,q1/q2,q2/q3,q3/q4,q4/q5,q5/q6,q6/q7,q7/v3,v1/v2,v2/v3,v3/v1,v1/v4,v2/v4,v3/v4,w/v1,r/x1,r/x2,x1/x2,y1/y2,y1/p3,y2/p3}
    \draw (\from) -- (\to);

\draw[bend left=15] (p1) to (p3);
\draw[bend left=15] (p3) to (v2);
\draw[bend right=15] (p1) to (v2);
\end{tikzpicture}
\end{center}
\caption{A GDP-tree with two $K_3$ blocks, a $2K_3$ block, two $K_2$ blocks, a $K_4$ block, and a $C_8$ block.}
\label{fig:Gallai-tree}
\end{figure}

Theorems~\ref{Ga2} and~\ref{theo:list_Brooks} extend  to DP-coloring as follows. 

\begin{thm}[Dvo\v r\' ak and Postle~\cite{2018DvPo} for graphs; Bernshteyn, Kostochka, and Pron’~\cite{2017BeKoPr} for multigraphs]
\label{thm:deg-choosable}
Let $G$ be a connected multigraph, and let $(H,L)$ be a degree-cover of $G$. 
% such that $|L(v)| \geq d_G(v)$ for every vertex $v \in V(G)$. 
If $G$ has no $(H,L)$-coloring, then $G$ is a GDP-tree.
\end{thm}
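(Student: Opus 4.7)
The plan is to prove both implications. The easier ``only if'' direction constructs a bad degree-cover by induction on the block structure of the GDP-tree, while the harder ``if'' direction is by induction on $|V(G)| + |E(G)|$, showing that every connected multigraph which is not a GDP-tree admits an $(H,L)$-coloring with respect to every degree-cover.

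For the ``only if'' direction, suppose $G$ is a GDP-tree. I would proceed by induction on the number of blocks. For the base case of a single block $B$, I construct a specific degree-cover $(H,L)$ admitting no coloring. If $B = K_n^t$, the idea is to design the matchings of the cover so that any $(H,L)$-coloring would produce a proper coloring of $K_n$ using only $t(n-1)$ color classes (impossible since $\cDP(K_n^t) = t(n-1)+1$); when $t \geq 2$ a small ``twist'' in one matching destroys any transversal. If $B = C_n^t$, I exploit the classical twisted-matching cover on a cycle that shows $\cDP(C_n) = 3$, then scale it by $t$. For the inductive step, if $v$ is a cut vertex splitting $G$ into $G_1$ and $G_2$, I choose bad covers on $G_1$ and $G_2$ with the property that the unique ``forced'' color at $v$ coming from each side can be arranged to coincide, so that identifying them along $L(v)$ yields a bad cover on $G$.

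For the converse, assume $G$ is connected, not a GDP-tree, and $(H,L)$ is a cover with $|L(u)| \geq d_G(u)$ for all $u$, and prove existence of an $(H,L)$-coloring by induction on $|V(G)|+|E(G)|$. Routine reductions allow me to assume $|L(u)| = d_G(u)$ everywhere, that no two parallel edges are joined by identical matchings (else delete a redundant edge), and that $G$ is $2$-connected: at a cut vertex $v$, inductively color both sides and identify the colors at $v$ using the slack in the cover. The main work is the $2$-connected case. Since $G$ is $2$-connected but not isomorphic to any $K_n^t$ or $C_n^t$, the underlying simple graph of $G$ contains either a chord of an induced cycle, an induced path of length $\geq 2$ at a vertex of degree $\geq 3$, or an even cycle; in each such configuration I select a vertex $v$ and a color $c \in L(v)$ so that removing $v$ from $G$ and the matching-partners of $c$ from the neighboring lists leaves a strictly smaller cover on $G - v$ that is still a degree-cover and not on a GDP-tree. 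Induction then produces a coloring of $G - v$ which, by choice of $c$, extends to $v$.

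The main obstacle is this last step in the $2$-connected case: because DP-cover matchings are arbitrary and need not share a global color palette, removing a vertex may destroy the degree-cover condition at its neighbors, and the residual multigraph may collapse back to a GDP-tree, blocking the induction. The delicate point is choosing $v$ and $c$ using the specific structure that witnesses $G$'s not being a GDP-block — an extra edge beyond a $K_n^t$, a chord of a cycle, or two adjacent vertices of degree $\geq 3$ — together with a kernel or alternating-path argument on the relevant cover matchings to guarantee that a compatible color remains at $v$. Handling the borderline multigraphs $C_n^t$ and $K_n^t$ with small perturbations, so that after a carefully chosen deletion the residual cover is colorable by the inductive hypothesis, is where most of the technical effort of Dvo\v r\'ak--Postle and Bernshteyn--Kostochka--Pron' is spent.
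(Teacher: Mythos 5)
This theorem is stated in the paper purely as a citation to Dvo\v r\'ak--Postle~\cite{2018DvPo} and Bernshteyn--Kostochka--Pron'~\cite{2017BeKoPr}; the authors give no proof, so there is no proof in the paper to compare your proposal against. (The closest the paper comes is Lemma~\ref{lem:clique-cover}, which proves a \emph{refinement}---a characterization of the bad covers on single GDP-blocks---but that lemma itself invokes Theorem~\ref{thm:deg-choosable} as a black box.) Your sketch does capture the standard high-level shape of the known proofs: bad covers built block-by-block for the GDP-tree direction, and an induction on $|V(G)|+|E(G)|$ reducing to the $2$-connected case for the converse.

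That said, two details in your sketch are off. First, in the gluing at a cut vertex $v$ for the ``only if'' direction, you do not want the ``forced color at $v$ from each side to coincide''; the correct move is to take $L(v)$ to be the \emph{disjoint concatenation} $L_1(v)\sqcup L_2(v)$ (with the matchings from $H_1$ acting only on $L_1(v)$ and those from $H_2$ only on $L_2(v)$), so that any coloring of $G-v$ has all of $L_1(v)$ blocked by the $G_1$-side and all of $L_2(v)$ blocked by the $G_2$-side. No identification is needed and ``coincidence'' is not the right mechanism, since DP-covers have no global palette. Second, the $2$-connected ``if'' case is the entire substance of the theorem, and you leave it as a pointer (``chord / induced path / even cycle, plus a kernel or alternating-path argument''). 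That is the right ballpark---the classical step is that a $2$-connected multigraph whose skeleton is not complete nor a cycle has two nonadjacent vertices $u,w$ with a common neighbor $v$ such that $G-u-w$ is connected, and one then chooses $H$-twins for $u,w$ so that $v$ keeps a spare color and finishes by degeneracy---but as written your proposal does not actually carry out this step, so it is a gap rather than a proof. Since the paper itself defers to the cited references for exactly this work, deferring likewise would be perfectly acceptable here, but the sketch as stated should not be mistaken for a complete argument.
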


Kim and Ozeki~\cite{KimO} characterized for each GDP-tree the degree-covers $(H,L)$ 
% with degree lists $L$   
for which $G$ is not $(H, L)$-colorable.

Analogous to how Theorems~\ref{Ga2} and~\ref{theo:list_Brooks} imply lower bounds on $f(n, k)$ and $f_\ell(n, k)$, respectively, Theorem~\ref{thm:deg-choosable} yields that the lower bound in~\eqref{in3} holds also for $\fDP(n,k)$ when $n\geq k+2$. 
Note that the bound~\eqref{in3} does not hold for DP-coloring of general multigraphs: for example, multiples of cycles
$k C_n$ are $2k$-regular and DP $(2k+1)$-critical.

%\begin{thm}\label{thm:DPgallai}
%If $G$ is an $n$-vertex DP $k$-critical multigraph, then $|E(G)|\geq\left(k-1\right)\frac{n}{2}$.
%\end{thm}

 Bernshteyn and Kostochka~\cite{2018BeKo} proved that for $n\geq k+2$,
Dirac's bound~\eqref{Di1} 
%{\JX in \cite{2018BeKo} the bound proven is a Dirac bound $2m > kn + k-2$ that's distinct from \eqref{Di1}.} 
 holds also for $\fDP(n,k)$.
The reader can find more history of the study of color-critical graphs, including seminal results by Gallai, Haj\' os, Sachs, and Stiebitz  in the recent monographs~\cite{Cran,2024StScTo}.

The main result of this paper is a lower bound on $\fDP(n,k)$
that is asymptotically better than~\eqref{in3}. 

\begin{thm}\label{thethm} 
   \iffalse
    Let $k\geq 5$ and $\lambda  = \left \lceil \frac{k^2 - 7}{2k-7}\right \rceil$.
    If $G$ is an $n$-vertex DP $k$-critical graph, then one of the following holds:
    \begin{enumerate}
        \item $G=K_k$, or
        %\item $k=5$ and either $G=4 K_2$ or $G$ is a double cycle, or {\JX We maybe do not need this item since the theorem is about simple graphs.}
        %{\IC I agree.}
        \item $|E(G)| \geq \left(k-1+\frac{1}{\lambda }\right)\frac{n}{2}
        +\frac{1}{\lambda }.$ %(Potential is $\leq -2$.)
    \end{enumerate}
    \fi
 If $k \geq 5$,  $\lambda  = \left \lceil \frac{k^2 - 7}{2k-7}\right \rceil$, and $n \geq k+2$,     then 
    \begin{equation}\label{eqcor62'}
        \fDP(n,k) \geq \left(k-1 + \frac {1}{\lambda} \right) \frac n2 + \frac 1 {\lambda}.        
    \end{equation}    
\end{thm}
 %Note that in  Case 3 of Theorem~\ref{thethm}, the average degree of $G$ is greater than \[k - 1 + \frac 1\lambda  = k - 1 + \left \lceil \frac{k^2 - 7}{2k-7}  \right \rceil^{-1}.\] 

We do not know whether $f_{\ell}(n,k)\geq \fDP(n,k)$ for all $n$ and $k$.
However, our method for proving Theorem~\ref{thethm} also allows us to show the following bound on $f_{\ell}(n,k)$.

\begin{thm}\label{thm:fl_intro}
    If $k \geq 5$,  $\lambda  = \left \lceil \frac{k^2 - 7}{2k-7}\right \rceil$, and $n \geq k+2$,     then 
    \begin{equation}\label{eqcor62}
        f_{\ell}(n,k) \geq \left(k-1 + \frac {1}{\lambda} \right) \frac n2 + \frac 1 {\lambda}.        
    \end{equation}
\end{thm}

This is slightly better than the bound of Theorem~\ref{Cr-Ra} for $k\geq 5$, see the last column of Table~1.
We do not know how to prove~\eqref{eqcor62} without using DP-coloring.
Thus, this is another example of usefulness of DP-coloring for proving results on list coloring.
As it is, an analog of Theorem~\ref{thethm} for $k=4$ requires additional ideas to work. In~\cite{k=4}, we use a somewhat more elaborate technique to prove that for $n\geq 11$, $\fDP(n,4) > \frac{8}{5}n$, which is better than~\eqref{eqcor62'} for $k=4$.

\subsection{Outline of the proof and structure of the paper}

The plan to prove Theorem~\ref{thethm} is as follows.

\begin{enumerate}
    \item We want to prove by induction on $n$ that if $G\neq K_k$ is an $n$-vertex DP $k$-critical graph with $m$ edges, then $2m\geq \left(k - 1 + \frac 1\lambda\right)n+\frac{2}{\lambda}$, which is equivalent to
    \begin{equation}\label{out1}
      ( \lambda (k-1)+1)n-2\lambda m\leq -2.
    \end{equation}
 \item  We wish to consider a smallest counterexample $G$ to~\eqref{out1} and 
 use the minimality of $G$ to
 consider smaller graphs for which~\eqref{out1} holds.  In doing so, we hope to
 infer some structural properties of $G$. 
  For this, we use a weighted version of~\eqref{out1}. If we assign
 $w(v)= \lambda (k-1)+1$ for each $v\in V(G)$ and $w(e)=2\lambda$ for each $e\in E(G)$, then we can rewrite~\eqref{out1} as
  \begin{equation}\label{out2}
      \sum_{v\in V(G)}\left( w(v)-\sum_{e\in E(G): v\sim e}\frac{w(e)}{2}\right) \leq -2.
    \end{equation}
   Note that vertices of degree at most $k-1$ contribute positive terms to the sum, while vertices of degree at least $k$ contribute negative terms. 
   Therefore, in order to prove the inequality, 
   we plan to prove that the $k^+$-vertices of $G$ contribute enough to the sum in order to negate the positive terms from the vertices of degree at most $k-1$.
   
 \item Sometimes it is convenient to consider smaller graphs obtained from $G$ in which some vertices have lists with fewer than $k-1$ colors.
 For induction, it is also convenient to consider multigraphs rather than just graphs.
 Therefore,
 we develop a more general and sophisticated model in which the weights of  vertices with smaller list sizes are less than 
$ \lambda (k-1)+1$ and the weights of multiple edges are greater than
$2\lambda$. In this new model, we still aim to prove~\eqref{out2}.

\item In our new model, every positive term of~\eqref{out2} corresponds to a {\em low} vertex, that is, a vertex $v$ for which the list size $h(v)$ is equal to the degree $d_G(v)$.
We let $\BB$ be the set of low vertices in $G$, 
a minimum counterexample to our new model.
We show that the induced subgraph $G[\BB]$ is a GDP-forest.

\item Overall, roughly speaking, the proof of the weighted version of~\eqref{out2} has three parts:

(a) We show that GDP-forests of maximum degree $k-1$ and without $(k-1)$-regular and $(k-2)$-regular blocks do not have too many edges.

(b) We show that in a minimum counterexample $G$, $G[\BB]$ has neither $(k-1)$-regular nor $(k-2)$-regular blocks.

(c) We use discharging to show that negative terms in~\eqref{out2} compensate for all positive ones, proving the inequality.
    
\end{enumerate}

The structure of the paper is as follows.
In Section~\ref{sec:setup}, we introduce our model discussed above, which allows multiple edges and variable list sizes of the vertices.
In Section~\ref{sec:prelim} we discuss properties of GDP-forests.
Properties of a minimum counterexample are shown in Section~\ref{sec:propG}, and we finish the proof via the discharging method in Section~\ref{sec:discharging}. 
In Section~\ref{listco} we prove the bound for $f_\ell(n,k)$.
In conclusion, we briefly discuss possible further research directions.

\section{The setup}\label{sec:setup}

We will prove a  statement stronger than Theorem~\ref{thethm} in the setting of multigraphs with the language of potentials, and we will obtain Theorem~\ref{thethm} as a corollary.

Let $G$ be a multigraph,
and let $h:V(G) \to [0,k-1]$ be a function. 
For each $v,x,y \in V(G)$, % and $xy$ with $x,y\in V(G)$,
we define the \emph{potential} $\rho_{G,h}(v)$ and $\rho_{G,h}(xy)$ to be as follows:
\[
\hfill
\rho_{G,h}(v)=
\begin{cases}
h(v)\lambda +1 & \textrm{if } h(v) = k-1 \\
h(v)\lambda -1 & \textrm{if } h(v)\in [2,k-2] \\
h(v)\lambda -2 & \textrm{if } h(v) \in \{0,1\}.
\end {cases}
\phantom{0}\hfill\phantom{0}
\rho_{G,h}(xy) = 
\begin{cases}
0 & \textrm{if } |E_G(x,y)| = 0 \\
%xy\notin E(G)\\
1 - (2\lambda +1)|E_G(x,y)| 
%\textrm{ if } |E_G(x,y)| \geq 1
& \textrm{otherwise.}
\end{cases}
\hfill
\]
In other words, if $x$ and $y$ are joined by a single edge in $G$, then $\rho_{G,h}(xy) = -2\lambda $, and each additional edge joining $x$ and $y$ adds $-(2\lambda +1)$ to $\rho_{G,h}(xy)$.

Given a vertex subset $A \subseteq V(G)$, 
we define the \emph{potential} of $A$ as 
\[\rho_{G,h}(A) = \sum_{x \in V(A) } \rho_{G,h}(x) + \sum_{xy \in  \binom{A}{2} } \rho_{G,h}(xy) .\]
When $A$ consists of a single vertex $v$, we often write $ \rho_h(v) = \rho_{G,h}(\{v\})$.
We also write $\rho_h(G) = \rho_{G,h}(V(G))$. 
We often omit the subscripts $G$ and $h$ when they are clear from the context.

Let $\N$ denote the set of positive integers.
Given a multigraph $G$, let $h:V(G) \to \N \cup\{0\}$.
A cover $(H,L)$ of $G$ is an {\em $h$-cover} of $G$ if $|L(v)| \geq h(v)$ for each $v \in V(G)$.
When $h\equiv t$ for some constant $t\in \N$, we call an $h$-cover simply a {\em $t$-cover}.
We say that $G$ is {\em DP $h$-colorable} if $G$ has an $(H,L)$-coloring for every $h$-cover $(H,L)$ of $G$. 
We say that $G$ is (DP) {\em $h$-minimal}
if $G$ is not DP $h$-colorable, but every proper subgraph $G'$ of $G$ is DP $h\vert_{V(G')}$-colorable.
Given a vertex subset $A \subseteq V(G)$, we often write $h$ for the restriction $h\vert_{A}$ when this does not cause confusion.

We now present our main result:

\begin{thm}
\label{thm:stronger}
    Let $k \geq 5$.
    Let $G$ be a multigraph
    and  $h:V(G) \to [0,k-1]$.
     If $G$ is $h$-minimal, then one of the following holds:
    \begin{enumerate}
        \item $G=K_{k}$ and $h(v) \in \{k-1, k-2\}$ for each $v \in V(G)$,
        with at most one vertex $w \in V(G)$ satisfying $h(w) = k-2$, 
        %\sout{$G=K_{k}$ and $h(v) = k-1$ for at least $k-1$ vertices $v \in V(G)$,
       % with $0$ or $1$ vertices $w \in V(G)$ satisfying $h(w) = k-2$,}
        
        \item $k=5$, either $G =4 K_2$ or $G$ is a double cycle, and $h(v) = 4$ for each $v \in V(G)$, or
        \item $\rho_h(G) \leq -2$.
    \end{enumerate}
\end{thm}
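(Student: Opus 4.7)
The plan is to proceed by strong induction on $|V(G)| + |E(G)|$. Suppose for contradiction that $(G,h)$ is a minimum counterexample: $G$ is $h$-minimal, neither of cases 1 and 2 applies, yet $\rho_h(G) > -2$. After routine reductions (splitting components, absorbing isolated vertices with $h(v) = 0$), I may assume $G$ is connected and $h(v) \geq 1$ for every $v$. Rewriting the potential vertex-by-vertex as
\[
\rho_h(G) \;=\; \sum_{v \in V(G)} \mu(v), \qquad \mu(v) \;:=\; \rho(v) \;+\; \tfrac12 \sum_{e \in E(G),\ v \in e} \rho(e),
\]
a short computation using the choice of $\lambda$ shows that $\mu(v) \leq 0$ on $V(G) \setminus \BB$ (where $\BB := \{v : h(v) = d_G(v)\}$), and among vertices of $\BB$ only simple vertices with $h(v) = k-1$ can have $\mu(v) > 0$; in that case $\mu(v) = +1$. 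Thus the entire positive mass of $\rho_h(G)$ is concentrated on such low $(k-1)$-vertices, and the task reduces to bounding their number (with suitable multiplicities) by the negative mass elsewhere.

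Next, I would extract strong structural information about $G[\BB]$. By Theorem~\ref{thm:deg-choosable} together with the $h$-minimality of $G$, every connected component of $G[\BB]$ is a GDP-tree: for otherwise one colors $G - V(C)$ first using minimality and then extends over $C$, since each $v \in V(C)$ retains at least $d_C(v)$ available colors after the outside is colored. Following part (b) of the outline, I would next rule out $(k-1)$-regular and $(k-2)$-regular blocks of $G[\BB]$. Such a block $B$, if present, either coincides with $G$ (yielding exceptional case 1 or 2), or else admits a reduction: color $G$ outside $B$ using minimality, then exploit the one extra color available on $B$'s boundary to apply Theorem~\ref{thm:deg-choosable} inside $B$ and produce a contradiction with $h$-minimality. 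This step also accounts for the fact that when $k=5$ the exceptional blocks $K_2^4$ and $C_n^2$ can themselves be the whole of $G$.

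With $G[\BB]$ a GDP-forest of maximum degree at most $k-1$ and no $(k-1)$- or $(k-2)$-regular blocks, part (a) of the outline then gives a tight upper bound on $|E(G[\BB])|$ relative to $|\BB|$, which in particular forces many edges from $\BB$ to $V(G)\setminus \BB$. The proof concludes via a discharging argument starting from initial charge $\mu(v)$, with rules that move charge from the (strongly negative) vertices in $V(G)\setminus \BB$ across the cut $E_G(\BB, V(G)\setminus\BB)$ onto the $+1$-charged low $(k-1)$-vertices of $\BB$; the block structure just established ensures enough cut edges per such vertex to absorb the required shipment, and the final total sums to at most $-2$. The main obstacle I anticipate is step (b): proving that any $(k-1)$- or $(k-2)$-regular block of $G[\BB]$ either forces $G$ to be one of the two exceptional graphs or permits a concrete reduction contradicting minimality, because with multi-edges and variable list sizes $h$ the boundary-color accounting required to extend a partial coloring into such a block is delicate. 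Once that is in hand, the discharging in step (c) becomes a careful but essentially mechanical bookkeeping exercise.
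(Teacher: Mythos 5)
Your outline tracks the paper's own plan from Section~1.4 quite faithfully: minimum counterexample, the vertex-local rewriting of $\rho_h(G)$ so that only low $(k-1)$-vertices carry positive charge, $G[\BB]$ a GDP-forest by Theorem~\ref{thm:deg-choosable}, the exclusion of $(k-1)$- and $(k-2)$-regular blocks, a numerical lemma about sparse GDP-forests, and discharging. You also correctly flag step~(b) as the crux. But two ideas are missing, and without them the sketch does not close.

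First, the paper does \emph{not} rule out $(k-2)$-regular blocks of $G[\BB]$ globally. It introduces a smallest pendent edge-block $B^*$ of $G$ with vertex set $S^*_0$, and proves only that $\BB_0 := \BB\cap S^*_0$ has no $(k-2)$-regular block (Lemma~\ref{Nothing}); the discharging is then carried out over $S^*_0$ alone, with Lemma~\ref{lem:k-1k} controlling the potential of the complementary part across the cut edge $x^*_0 y^*_0$. Your proposed reduction --- ``color outside $B$, exploit the extra color on $B$'s boundary'' --- breaks down precisely when the $(k-2)$-regular block is attached to the rest of $G$ by cut edges: then the residual lists on $B$ after coloring the outside have no slack and Theorem~\ref{thm:deg-choosable} gives nothing. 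The paper's fix is Lemma~\ref{comp} (replace $B$ by an edge $uu'$ between two external neighbours, show the resulting multigraph is still not DP $h$-colorable, using a careful case analysis of the cover via Lemma~\ref{lem:clique-cover}), plus the $S^*_0$/$\rho$-bounds (Lemma~\ref{lem:j(k-2)}, Lemma~\ref{lem:terminal}) to derive a contradiction. You would also need the intermediate Lemma~\ref{lem:NoKk-1}, whose proof is an iterated potential argument over the neighbours of a putative $K_{k-1}$ of low $(k-1)$-vertices, and whose exceptional $k=5$ case requires separate handling via Lemma~\ref{lem:rho5}.

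Second, you reduce to $h(v)\geq 1$ but the GDP-forest inequality (Lemma~\ref{GDP'}, your step (a)) requires $h(v)\geq 3$ on $\BB$. The paper derives $h(v)\geq 3$ through Lemma~\ref{lem:low} and Lemma~\ref{lem:no2}; the latter is a genuine reduction argument (contracting through a $2$-vertex, adding a matching edge to the cover between its two neighbours) and again needs the potential machinery of Lemmas~\ref{lem:j(k-2)} and \ref{lem:terminal}. Without $h\geq 3$ your step (a) is simply false (a pendant vertex of degree $1$ in a GDP-tree would violate the inequality). So the proposal is aimed in the right direction, but the missing pieces --- the pendent-edge-block restriction, the potential lemmas that support it, and the lower bound $h\geq 3$ --- are exactly where the real content of the proof lives.
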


Note that  when $G$ is a graph and $h(v) = k-1$ for each $v \in V(G)$,
the statement $\rho_h(G) \leq -2$ is equivalent to the statement
$|E(G)| \geq (k-1  + \frac{1}{\lambda})\frac{|V(G)|}{2} + \frac{1}{\lambda}$. Therefore, 
since $4 K_2$ and double cycles are not graphs,
Theorem~\ref{thm:stronger} implies Theorem~\ref{thethm}.

A multigraph is {\em exceptional} if it is either a complete graph, or $4 K_2$ or a double cycle in the case  $k=5$.

We end this section with some lemmas for the potential function.
The following well-known lemma (c.f.~\cite[Fact 11]{2018KoYa}) shows that our potential function is submodular.
\begin{lemma}\label{lem:submodularity}
    If $U_1, U_2 \subseteq V(G)$, then
    %\[\rho(U_1 \cup U_2) + \rho(U_1 \cap U_2) = \rho(U_1) + \rho(U_2) + \rho_{G,h}(E_G(U_1 \setminus U_2, U_2 \setminus U_1)),\]
    %where $E'$ is the set of edges with one endpoint in $U_1 \setminus U_2$ and one endpoint in $U_2 \setminus U_1$. 
    %As a corollary, 
    $\rho_{G,h}(U_1 \cup U_2) + \rho_{G,h}(U_1 \cap U_2) \leq \rho_{G,h}(U_1) + \rho_{G,h}(U_2)$.
\end{lemma}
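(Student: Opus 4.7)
The plan is to split the potential $\rho_{G,h}(A) = \sum_{x \in A}\rho_{G,h}(x) + \sum_{xy \in \binom{A}{2}}\rho_{G,h}(xy)$ into its vertex term and its pair term, and verify the claimed inequality for each separately. For the vertex term, each vertex $x \in V(G)$ appears the same number of times on the two sides of the inequality: $2$ times if $x \in U_1 \cap U_2$, once if $x$ lies in exactly one of $U_1, U_2$, and $0$ times otherwise. Consequently the vertex contributions on the two sides are equal, and the entire content of the lemma lies in the pair contribution.

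For the pair term I would fix an unordered pair $\{x,y\} \subseteq U_1 \cup U_2$ and examine its multiplicity in each of the four sums $\binom{U_1 \cup U_2}{2}$, $\binom{U_1 \cap U_2}{2}$, $\binom{U_1}{2}$, and $\binom{U_2}{2}$. A short case analysis based on which of the three sets $U_1 \cap U_2$, $U_1 \setminus U_2$, $U_2 \setminus U_1$ contains $x$ and which contains $y$ shows that the multiplicities of $\{x,y\}$ on the two sides agree in every case except one: when $x \in U_1 \setminus U_2$ and $y \in U_2 \setminus U_1$, the pair $\{x,y\}$ contributes $\rho_{G,h}(xy)$ to the left-hand side (through $\binom{U_1 \cup U_2}{2}$), while it contributes $0$ to the right-hand side and $0$ to the other sum on the left.

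It therefore suffices to check that $\rho_{G,h}(xy) \leq 0$ for every pair $xy \in \binom{V(G)}{2}$. This is immediate from the definition of the pair potential: either $xy \notin E(G)$, in which case $\rho_{G,h}(xy) = 0$, or $\rho_{G,h}(xy) = 1 - (2\lambda+1)|E_G(x,y)| \leq 1 - (2\lambda+1) = -2\lambda$. Since $k \geq 5$ gives $\lambda = \left\lceil (k^2-7)/(2k-7)\right\rceil \geq 1$, each occurrence of the exceptional case contributes a nonpositive quantity only to the left-hand side, which yields the desired submodularity.

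I do not foresee any serious obstacle: the argument is a routine submodularity calculation. The only substantive observation is that the pair potential is nonpositive, which reflects the fact that edges are being treated as penalties in the potential function; once this is recorded, the inequality follows automatically from the vertex/pair decomposition and a mechanical check of multiplicities.
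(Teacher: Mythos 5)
Your proof is correct. The paper does not include its own proof (it cites the lemma as well-known), but your argument is the standard one: the vertex contributions cancel exactly, the pair multiplicities agree except for pairs $\{x,y\}$ with $x \in U_1 \setminus U_2$ and $y \in U_2 \setminus U_1$, and the surplus term $\rho_{G,h}(xy)$ on the left is nonpositive because $\rho_{G,h}(xy)$ is $0$ for non-edges and $1 - (2\lambda+1)|E_G(x,y)| \leq -2\lambda < 0$ for edges.
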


% For a multigraph $G$, let $G^-$ denote any graph obtained by removing an edge from $G$. 

% \iffalse
% \begin{lemma}
% \label{lem:Kr-sub-rho}
%     % Let $F^-$ be any graph obtained by removing an edge from $K_k$, 
%     Let $F$ be a copy of $K_k$ and let $h(v) = k-1$ for each $v \in V(F)$. 
%     For a proper induced subgraph $F'$ of $F^-$,
%     $\rho_h(F') \geq \rho_{h}(F^-) + (k-3)(\lambda -1)-2$.    
% \end{lemma}
% \begin{proof}
%     The potential of a complete graph on $j$ vertices is 
%     \[j((k-1)\lambda +1) - 2\lambda  \binom j2 = j\lambda k - j\lambda  + j - \lambda j(j-1) = -\lambda j^2 + j(\lambda k+1).\]
%     In order to minimize the potential of $F'$, we assume that $F'$ is a complete graph.
%     If $|V(F')| = j$, then 
%     $\rho_h(F') = -\lambda j^2 + j(\lambda k+1)$.
%     As a function of $j$, $\rho_h(F')$ is a concave quadratic function and hence is minimized at $j=1$ or $j=k-1$. 
%     Therefore, 
%     \[\rho_h(F') \geq \min \{ (k-1)\lambda +1, (k-1)(\lambda +1)\} = (k-1)\lambda +1.\]
%     Therefore, $\rho_h(F') - \rho_h(F^-) \geq (k-1)\lambda  + 1  - (k+2\lambda ) = (k-3)(\lambda -1)-2$.
% \end{proof}
% \fi

Let $G^-$ denote a multigraph obtained from $G$ by removing any edge.

\begin{lemma}
\label{lem:Kr-sub-rho}
    % Let $F^-$ be any graph obtained by removing an edge from $K_k$, 
    Let $F$ be a copy of $K_k$ and let $h(v) \in \{k-2,k-1\}$ for each $v \in V(F)$, where at most one vertex $v$ has $h(v)=k-2$. 
    For a proper induced subgraph $F'$ of $F^-$,
    $\rho_h(F') \geq \rho_{h}(F^-) + (k-3)(\lambda -1)-2$.    
\end{lemma}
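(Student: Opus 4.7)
The plan is a direct computation using the explicit formulas defining the potential function. First I would evaluate $\rho_h(F^-)$ exactly: each of the $k$ vertices contributes $(k-1)\lambda + 1$ since $h(v) = k-1$, and each of the $\binom{k}{2} - 1$ (single) edges of $F^-$ contributes $-2\lambda$. Summing gives $\rho_h(F^-) = k + 2\lambda$. Substituting this into the target inequality reduces the lemma to showing
\[
\rho_h(F') \;\geq\; (k-1)\lambda + 1
\]
for every (nonempty) proper induced subgraph $F'$ of $F^-$.

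Next, I would parameterize by $j = |V(F')|$, so $1 \leq j \leq k-1$. Since $F^-$ is $K_k$ with exactly one edge $e^\ast$ removed, there are only two cases: either $V(F')$ omits at least one endpoint of $e^\ast$, in which case $F' \cong K_j$, or $V(F')$ contains both endpoints of $e^\ast$ (forcing $j \geq 2$), in which case $F' \cong K_j^-$. A direct count of the vertex and edge contributions yields $\rho_h(F') = j\lambda(k-j) + j$ in the first case, and the same expression plus $2\lambda$ in the second case.

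Finally, I would verify that $j\lambda(k-j) + j \geq (k-1)\lambda + 1$ for every integer $j$ with $1 \leq j \leq k-1$. Since $j(k-j)$ is concave in $j$, its minimum over $\{1, 2, \ldots, k-1\}$ is attained at the endpoints $j = 1$ and $j = k-1$, where it equals $k-1$; combined with $j \geq 1$, this gives the desired inequality in the first case, with equality precisely when $j = 1$. The second case only strengthens the bound by an additional $2\lambda$, so the same inequality holds there as well.

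There is no real conceptual obstacle; the argument is essentially bookkeeping. The only mild subtlety is separating the two cases according to whether $F'$ inherits the deleted edge, and noticing that the extremal configuration making the constant $(k-3)(\lambda-1) - 2$ on the right-hand side sharp is the isolated-vertex case $j = 1$, rather than any intermediate induced subgraph, thanks to the concavity of $j \mapsto j(k-j)$.
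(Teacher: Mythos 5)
Your proof is correct and takes essentially the same route as the paper: both reduce the claim to $\rho_h(F')\geq(k-1)\lambda+1$, observe that among $j$-vertex induced subgraphs the complete graph minimizes the potential, note the resulting expression $-\lambda j^2+(\lambda k+1)j$ is a concave quadratic in $j$, and check the endpoints $j=1$ and $j=k-1$. Your version is marginally more explicit in splitting into the two cases according to whether $F'$ inherits the deleted edge, but this is exactly the observation the paper compresses into ``we assume $F'$ is a complete graph.''
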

\begin{proof}
    The potential of a complete graph $K_j$ having  $j_1 \in \{j-1,j\}$ vertices $v$ with $h(v)=k-1$ and %$0\leq j_2\leq 1$
     $j_2 = j-j_1$
    vertices $u$ with $h(u)=k-2$
    is 
        \[
        j_1((k-1)\lambda +1)+j_2((k-2)\lambda -1) - 2\lambda  \binom j2 = j k \lambda - j \lambda - j_2 \lambda + j_1 - j_2 - \lambda j (j-1)
        \]
        \[
        = -\lambda j^2 + j k\lambda + j_1 - j_2(\lambda + 1).
        \]
   % \[j_1((k-1)\lambda +1)+j_2((k-2)\lambda -1) - 2\lambda  \binom j2 = j\lambda k - j\lambda  + j - \lambda j(j-1)\]
   % \[ = -\lambda j^2 + j(\lambda k+1).\]
    In order to minimize the potential of $F'$, we assume that $F'$ is a complete graph.
    If $|V(F')| = j$, then 
    $\rho_h(F') = -\lambda j^2 + j(\lambda k+1)$ when $j_2=0$ and $\rho_h(F') = -\lambda j^2 + j(\lambda k+1)-\lambda-2$ when $j_2=1$.
   In both cases, as a function of $j$, $\rho_h(F')$ is a concave quadratic function and hence is minimized at $j=1$ or $j=k-1$. 
    Therefore, if $j_2=0$, then
    \[\rho_h(F') \geq \min \{ (k-1)\lambda +1, (k-1)(\lambda +1)\} = (k-1)\lambda +1,\]
  and if $j_2=1$, then
    \[\rho_h(F') \geq \min \{ (k-2)\lambda -1, (k-2)(\lambda +1) - 1\} = (k-2)\lambda -1.\]
 Thus, in both cases,
\[
\rho_h(F') \geq (k-1)\lambda + 1 - j_2(\lambda + 2).
\]
As $\rho_h(F^-) = k - j_2(\lambda+2) + 2\lambda $,
we get %it follows that
\[\rho_h(F') - \rho_h(F^-) \geq  (k-1)\lambda + 1 - k  - 2\lambda = (k-3)(\lambda -1)-2.
\]

%  In both cases, $\rho_h(F') - \rho_h(F^-) \geq (k-1)\lambda  + 1  - (k+2\lambda ) = (k-3)(\lambda -1)-2$.
\end{proof}

\begin{lemma}\label{lem:rho5}
Let $k=5$, and let $F$ be an exceptional multigraph such that $h(v) = 4$ for each $v \in V(F)$. 
Then $\rho_h(F)\in\{5, 0, -1\}$. 
% \begin{enumerate}
%     \item If $F = K_5$, then $\rho(F) = 5$.
%     \item If $F = 4 K_2$, then $\rho(F) = -1$.
%     \item If $F = 2 C_n$, then $\rho(F) = 0$.
% \end{enumerate}
Moreover, if $A$ is a non-empty proper subset of $V(F^-)$, then $\rho_{F^-}(A)\geq \rho_h(F^-)+8$.
\end{lemma}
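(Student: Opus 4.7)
The plan is to verify both statements by direct computation, using the very short list of exceptional multigraphs. For $k=5$ we have $\lambda = \lceil 18/3\rceil = 6$, so a vertex $v$ with $h(v)=4$ contributes $\rho_h(v) = 4\lambda + 1 = 25$, a single edge contributes $-2\lambda = -12$, a double edge contributes $1 - 2(2\lambda+1) = -25$, and a quadruple edge contributes $1 - 4(2\lambda+1) = -51$. Plugging in: $K_5$ has $\rho_h = 5\cdot 25 + 10\cdot(-12) = 5$; $K_2^4$ has $\rho_h = 2\cdot 25 + (-51) = -1$; and a double cycle $C_n^2$ has $\rho_h = 25n + n\cdot(-25) = 0$. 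This establishes the first claim that $\rho_h(F)\in\{5,0,-1\}$.

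For the moreover statement, I would first compute $\rho_h(F^-)$ by observing that deleting one edge changes the contribution of the affected pair: from $-12$ to $0$ in the $K_5$ case (a $+12$ change), and from $-25$ to $-12$ or from $-51$ to $-38$ in the other two cases (a $+13$ change). Hence $\rho_h(F^-)$ equals $17$, $12$, $13$ respectively, and the claimed inequalities $\rho_{F^-}(A)\geq \rho_h(F^-)+8$ all follow from the single uniform bound $\rho_{F^-}(A) \geq 25$ for every nonempty proper $A\subsetneq V(F^-)$.

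To prove that uniform bound, I would handle each exceptional $F$ separately. When $F=K_2^4$, the only nonempty proper subset is a single vertex with $\rho = 25$. When $F=K_5$, I would stratify by $|A|$, using that the induced subgraph on $A$ in $F^-$ has $\binom{|A|}{2}$ or $\binom{|A|}{2}-1$ edges depending on whether the removed edge of $F^-$ lies inside $A$. A short check then gives minimum values of $25$, $38$, $39$, $28$ for $|A|=1,2,3,4$, all at least $25$. When $F = C_n^2$, the key observation is that any proper subset $A$ of $V(C_n)$ induces a disjoint union of paths in $C_n$, hence at most $|A|-1$ cycle-edges and therefore at most $|A|-1$ double edges in $C_n^2$; thus $\rho_F(A) \geq 25|A| - 25(|A|-1) = 25$, and passing from $F$ to $F^-$ can only increase the potential.

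I do not foresee a serious obstacle: this lemma is a bookkeeping check over three tiny families of multigraphs, and the only care required is to track whether the deleted edge of $F^-$ lies inside $A$ and to avoid off-by-one errors in the per-edge contributions.
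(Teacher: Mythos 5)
Your proposal is correct and follows essentially the same route as the paper: a direct computation of $\rho_h$ for the three exceptional multigraphs, followed by a case-by-case lower bound on $\rho_{F^-}(A)$ for proper subsets $A$ (single vertex for $K_2^4$, bounding edges by $\binom{|A|}{2}$ for $K_5$, and the paths-in-a-cycle bound for $C_n^2$). The arithmetic checks out throughout, and your observation that deleting an edge only increases pair potentials is a clean way to reduce the $C_n^2$ case from $F^-$ to $F$, but it is the same underlying bound as the paper's.
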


\begin{proof}
    As $k = 5$, $\lambda = 6$, so, $(k-1)\lambda + 1= 25$ and $2 \lambda = 12$. 
    If $F = K_5$, then $\rho_h(F) = 5(25) - 10(12) = 5$. If $F = 4 K_2$, then $\rho_h(F) = 2(25) - 12 - 3(13) = -1$. If $F = 2 C_n$, then $\rho_h(F) = 25n - 12n - 13n = 0$.
    % Therefore, $\rho_h(F)\leq 5$. 

Let $A$ be a nonempty proper subset of $V(F^-)$.
We consider the following cases.
    % \begin{enumerate}[(a)]
        % \item 
        If $F=4 K_2$, then $\rho_h(F^-)=12$, and $\rho_{h}(A)=25$. % for any $1$-vertex subset of $V(F^-)$.
        % \item 
        If $F=K_5$, then $\rho_h(F^-)=17$, and $\rho_{F^-}(A) \geq 25|A| - 12 \binom{|A|}{2} \geq 25$.
    % \item 
    If $F$ is a double cycle, then $\rho_h(F^-)=13$ and $\rho_{F^-,h}(A)\geq 25 |A| - (12+13)(|A| - 1) = 25$.% for any proper nonempty subset $A \subseteq V(F^-)$.
    % \end{enumerate}
% This proves the lemma.
\end{proof}

\section{Properties of GDP-forests}\label{sec:prelim}

For a positive integer $q$, the \emph{$q$-blowup} of a graph $G$ is the multigraph obtained by replacing each $v \in V(G)$ with an independent set $I_v$ of size $q$ and replacing each edge $uv \in E(G)$ with a $K_{q,q}$ joining $I_u$ and $I_v$. 

Suppose that $(H,L)$ is a DP-cover of a multigraph $G$.
The \emph{$q$-blowup} of $(H,L)$ is a DP-cover of $q G$ obtained by replacing each vertex $u \in V(H)$ with an independent set $I_u$ of size $q$, replacing each edge $uw \in E(H)$ with a $K_{q,q}$ joining $I_u$ and $I_w$, and replacing each list $L(v)$ with the set $\bigcup_{u \in L(v)} I_u$.

%Below is a special case of Theorem~\ref{thm:deg-choosable}.
The following lemma is a partial case of a result by Kim and Ozeki~\cite[Theorem~5]{KimO}, see also~\cite[Theorem~1.17]{2024StScTo}.
%See also~\cite{2019Schweser,2024StScTo}.

\begin{lemma}
\label{lem:clique-cover}
Let $G$ be a multigraph and $h$ be a function on $V(G)$. 
Let $(H, L)$ be an $h$-cover of $G$.

(a) If $G=q K_{t}$ and $h(v)=(t-1)q$ for all vertices $v$, then $G$ has no $(H, L)$-coloring if and only if $(H,L)$ is a $q$-blowup of a $(t-1)$-cover $(H',L')$ of 
$K_t$ where $H'$ forms 
 $t-1$ disjoint copies 
of $K_t$.

(b) If $G=q C_{2t}$ and $h(v)=2q$ for all vertices $v$, then $G$ has no $(H, L)$-coloring if and only if $(H,L)$ is  a $q$-blowup of a $2$-cover $(H',L')$ of $C_{2t}$ where $H'$ is $C_{4t}$.

(c) If $G=q C_{2t+1}$ and $h(v)=2q$ for all vertices $v$, then $G$ has no $(H, L)$-coloring if and only if $(H,L)$ is 
a $q$-blowup of a $2$-cover $(H',L')$ of $C_{2t+1}$ where $H'$ forms
 two disjoint copies of $C_{2t+1}$.
\end{lemma}

For the next lemma, we need some more definitions and notation.
Let ${\rm dist}_G(u,v)$ denote the distance from $u$ to $v$ in a multigraph $G$.
The {\em diameter}, ${\rm diam}(G)$, of a connected multigraph $G$ is $\max_{u,v\in V(G)}{\rm dist}_G(u,v)$.
A vertex $x\in V(G)$ is {\em peripheral} if there is $y\in V(G)$ such that ${\rm dist}_G(u,v)={\rm diam}(G)$. The {\em block tree}
of a connected multigraph $G$ is the bipartite graph $H$ whose parts are the set of cut vertices and the set of the blocks of $G$, with a cut vertex $v$ adjacent to block $B$ in $H$ if and only if $v\in B$. This definition implies that the block tree of a connected multigraph $G$ is always a tree whose leaves are some blocks of $G$.
Call a block of a connected multigraph $G$ {\em peripheral} if the corresponding vertex in the block tree of $G$ is peripheral.

For a multigraph $F$, let $\wt{F}$ denote the {\em underlying graph of $F$}, i.e. the graph from which $F$ is obtained by multiplying some edges.
In other words, $\wt F$ is the graph on $V(F)$ in which $u$ and $v$ are adjacent if and only if $|E_F(u,v)| \geq 1$.
For a vertex $v\in V(F)$, let $\wt{d}(v)$ denote $d_{\wt F}(v)$, the degree of $v$ in $\wt{F}$, which is equal to $|N_F(v)|$.

For a multigraph $F$, let $m(F):=|E(F)|-|E(\wt F)|$.
 So, $2m(F)=\sum_{v\in V(F)}(d_F(v)-\wt d_F(v))$.

Given a function $h: V(F) \to \N\cup\{0\}$ with $h(v)\geq d_F(v)$ for each $v\in V(F)$, define $\sigma_h(F):=\sum_{v\in V(F)}(h(v)-d_F(v))$.
For a positive integer $j$, let $V_j(F)$ and $V^-_j(F)$ denote the  sets of vertices $v\in V(F)$ with $h(v)=j$ and $h(v)<j$, respectively, in $F$.
% For a positive integer $j$, let $V_j(F)$ denote the  set of vertices $v\in V(F)$ with $h(v)=j$ in $F$ and $V^-_j(F)$ denote the  set of vertices $v\in V(F)$ with $h(v)<j$ in $F$.

\iffalse

% {\JX
% \begin{lemma}\label{lem:k24twist}
%     Let $F$ be a graph with vertex set $\{u, v, w\}$, where $u$ and $w$ induce a $K_2^3$, and $v$ is adjacent to $u$ and $w$ by one edge each. Let $h(v) = 2$, and $h(u) = h(w) = 4$. Then $F$ is $h$-colorable.
%     {\PB Why doesn't  this follow from Theorem \ref{thm:deg-choosable}?} {\AK I think, it does.}
% \end{lemma}
% \begin{proof}
%     Suppose $(H, L)$ is an $h$-cover of $F$ that has no coloring.
%     We may assume that $L(u) = \{u_i:i\in [4]\}, L(w) = \{w_i:i\in [4]\}$, and $u_iw_i\notin E(H)$ for each $i\in [4]$.
%     Then in a proper $(H,L)$-coloring, $u$ and $w$ have to be colored by $u_i, w_i$ for some $i\in [4]$.
%     We color $v$ arbitrarily. Since the color of $v$ in $H$ forbids at most two pairs of $u_i, w_i$, we can always pick a pair $u_j, w_j$ such that neither of which is adjcent to the color of $v$ in $H$, a contradiction.
% \end{proof}

% \begin{lemma}\label{lem:C2ttwist}
%     Let $t\geq 3$ be an integer,  $F$ be obtained from $2 C_t$ by deleting a single edge $e$ and adding a degree two vertex $v$ adjacent to the endpoints of $e$. Let $h(v) = 2$, and $h(x) = 4$ for $x\in V(F - v)$. Then $F$ is $h$-colorable.
%     {\PB Why doesn't this follow from Theorem \ref{thm:deg-choosable}?} {\AK I think, it does.}
% \end{lemma}

% \begin{proof}
%      Suppose $(H, L)$ is an $h$-cover of $F$ that has no coloring.
%      Let $N(v) = \{u, w\}$, $L(v) = \{a, b\}$, and denote the neighbor of $a, b$ in $L(u), L(w)$ by $a_u, b_u, a_w, b_w$, respectively.
%      We add edges $a_ub_w, a_wb_u$ between $L(u)$ and $L(w)$. Call this new cover graph by $(H', L')$, where $L'$ is $L$ taking restriction on $V(F-v)$. Note that $(H', L')$ is a ${\bf 4}$-cover for $2 C_t$.
%      Since we only add two extra edges between $L(v)$ and $L(w)$, by Lemma~\ref{lem:clique-cover} (a), $(H', L')$ has a coloring $\phi$.
%      By construction, $\phi(u)$ and $\phi(w)$ cannot forbid both colors in $L(v)$. We then can extend $\phi$ to an $(H,L)$-coloring, a contradcition.
% \end{proof}
% }

\fi

\begin{lemma}\label{GDP'} Let $k\geq 5$ and $\alpha=\frac{k-2}{2k-7}$.
Suppose that $T$ is a GDP-tree and $h: V(T)\to \N\cup\{0\}$ satisfies

(i) $3\leq h(v)\leq k-1$ for each $v\in V(T)$,

(ii) $h(v)\geq d_T(v)$ for each $v\in V(T)$, and

(iii) $T$ has neither $(k-1)$-regular nor $(k-2)$-regular blocks.
%(iv) $T$ contains no $K_{k-1}$.

%(v) for each double edge $uv$ in $T$, $h(u)>d(u)$ and $h(v)>d(v)$.
 Define $\Phi_k(T)= \alpha \sigma_h(T)+m(T)+|V^-_{k-1}(T)|-|V_{k-1}(T)|$.
 Then 
 \begin{equation}\label{s(T)'}
\Phi_k(T)>1+\alpha.
 \end{equation}
\end{lemma}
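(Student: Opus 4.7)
I proceed by induction on $|V(T)|$. For the base case $T = \{v\}$: $\Phi_k(\{v\}) = \alpha h(v) + 1$ if $h(v) < k-1$ and $\alpha h(v) - 1$ if $h(v) = k-1$. Checking the two subcases, $\Phi_k(\{v\}) > 1 + \alpha$ reduces to either $\alpha(k-2) > 2$, equivalently $(k-4)^2 + 2 > 0$, or $\alpha(h(v)-1) > 0$; both hold since $k \geq 5$, $h(v) \geq 3$, and $\alpha > 0$.

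For the inductive step I use the decomposition $\Phi_k(T) = \sum_v \pi(v) + \sum_B \mu(B)$, where $\pi(v) = \alpha h(v) \pm 1$ as in the base case and $\mu(B) = (1-2\alpha)|E(B)| - |E(\wt{B})|$. Letting $B$ be a leaf block of $T$ with cut vertex $v$ and $T' = T - (V(B) \setminus \{v\})$, a direct computation gives $\Phi_k(T) - \Phi_k(T') = \tau(B, v) := \sum_{u \in V(B) \setminus \{v\}} \pi(u) + \mu(B)$. A case analysis on the block type ($K_n^t$ or $C_n^t$), using condition~(iii) to enforce $d_B \leq k-3$ so that each non-cut vertex with $h(u) = k-1$ contributes slack $\alpha(k-1-d_B) - 1 \geq 2\alpha - 1 = 3/(2k-7) > 0$, shows $\tau(B,v) \geq 0$ in every case except when $B = K_{k-2}$ with $k \geq 6$ and $h$ is close to identically $k-1$ on $V(B)$, where $\tau(B,v)$ can drop to $(k-3)(\alpha - 1) < 0$.

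In this exceptional case, condition~(ii) combined with $d_B(v) = k-3$ forces $d_{T'}(v) \leq 2$, so the block $B'$ of $T'$ through $v$ has regularity at most $2$ and hence lies in $\{K_2,\, K_2^2,\, K_3,\, C_n \text{ for odd } n \geq 5\}$. I use a \emph{double peel}: remove $V(B) \cup V(B') \setminus \{w\}$ for a suitable anchor $w \in V(B') \setminus \{v\}$. The net change equals $\tau(B,v) + \tau(B', w)$, and direct calculation (using $\alpha - 1 = -(k-5)/(2k-7)$) yields $\tau(K_{k-2}) + \tau(B') \geq (k-5)/(2k-7) \geq 0$ for each of the four possible types of $B'$. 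The single-block case $T = K_{k-2}$ (where no $B'$ exists) is verified directly: $\Phi_k(K_{k-2}) = 3\alpha = (1+\alpha) + 3/(2k-7) > 1 + \alpha$. The inequality $2(k-3) > k-1$ for $k \geq 6$ ensures that no two $K_{k-2}$ blocks share a cut vertex, so such a $B'$ always exists whenever $T \neq K_{k-2}$.

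The main obstacle is executing this double-peel step cleanly: when $w$ is itself a cut vertex of $T$ and $B'$ has more than two cut vertices, removing $V(B) \cup V(B') \setminus \{w\}$ may disconnect $T$, and the inductive hypothesis must be applied to each component of the resulting GDP-forest before summing (which more than suffices since the lemma provides $\Phi_k > 1+\alpha$ per component). Verifying $\tau(K_{k-2}) + \tau(B') \geq 0$ for each allowed $(k, B')$-pair, and confirming that non-worst-case $h$-assignments only increase $\tau(B, v)$, constitutes the computational bulk of the argument.
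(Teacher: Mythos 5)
Your decomposition $\Phi_k(T) = \sum_v \pi(v) + \sum_B \mu(B)$ with the leaf-block peel $\tau(B,v)$ is sound, the base case is correct, and you correctly identify $B = K_{k-2}$ with $k\geq 6$ as the one block type for which the single peel fails, with worst-case drop $(k-3)(\alpha-1)$. But the ``double peel'' step has a genuine gap.

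First, the claimed set of possible types for the next block $B'$ is wrong: since $d_{T'}(v)\le 2$, the constraint is only that $B'$ is $1$- or $2$-regular, so $B' \in \{K_2,\ K_2^2,\ C_n : n\ge 3\}$; even cycles $C_4, C_6,\dots$ are perfectly valid GDP-tree blocks and are missing from your list (the ``odd'' restriction would be correct for Gallai trees but not here).

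Second and more seriously, the claim that ``the net change equals $\tau(B,v) + \tau(B', w)$'' is incorrect when $B'$ is a cycle with internal cut vertices. Removing $V(B)\cup V(B')\setminus\{w\}$ also removes every edge from a deleted cut vertex of $B'$ to the subtrees hanging off it, and each such (single) edge contributes $-2\alpha$; these are not accounted for in $\tau(B',w)$. And the per-component inductive bound $\Phi_k(T_i) > 1+\alpha$ does \emph{not} suffice to cover this deficit. Concretely, take $k=6$ (so $\alpha = 4/5$), let $B = K_4$ attach at $v$ to a cycle $B' = C_n$, and hang a $K_4$ off each of the $n-2$ remaining internal vertices of $B'$, with $h\equiv 5$ everywhere. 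Direct computation gives $\Phi_k(T) = \tfrac{11}{5} + \tfrac{4(n-2)}{5} > 1+\alpha$, as the lemma demands. Yet after deleting $V(B)\cup V(B')\setminus\{w\}$ the remaining forest has $n-1$ components ($\{w\}$ and $n-2$ copies of $K_3$), the ``removed stuff'' equals $-\tfrac{4}{5} - \tfrac{17(n-2)}{5}$ (far below $\tau(B,v)+\tau(B',w) = \tfrac{7n-18}{5}$ because of the $3(n-2)$ boundary edges contributing $-\tfrac{8}{5}$ each), and the resulting bound is $\Phi_k(T) > (n-1)\cdot\tfrac{9}{5} + \bigl(-\tfrac{4}{5}-\tfrac{17(n-2)}{5}\bigr) = \tfrac{21-8n}{5}$, which is already negative at $n=3$ and decreasing. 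So the argument does not recover the lemma.

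The paper's proof sidesteps exactly this obstacle: for $B_0$ a cycle of length $\ge 4$, it does not peel $B_0$ at all; instead it removes only $V(B)$ (including the cut vertex $u$) and \emph{adds} the edge $u_1u_2$ to close $B_0-u$ back into a shorter cycle $C_{n-1}$, producing a connected GDP-tree with strictly fewer blocks to which induction applies cleanly (Case 2.1). The full double peel is used only when the neighbouring block is a triangle (Cases 2.2, 2.3), where the bookkeeping is tractable because at most one extra vertex is deleted. Your plan would need to replace the double peel for $B'=C_n$, $n\ge 4$, with some device --- such as the paper's edge addition --- that keeps the smaller tree connected; the inductive hypothesis alone, applied component by component, does not close the gap.
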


\begin{proof} We use induction on the number of blocks in $T$. 

Suppose first that $T$ has exactly one block. 
%Then by (iii), $V_{k-1}(T)=\emptyset$ and $V^-_{k-1}(T)=V(T)$. So, instead of~\eqref{s(T)}, it is enough to prove
%\begin{equation}\label{s(T)2} \sigma(T)+m(T)+|V(T)|\geq 3. 
% \end{equation}
If $T$ is a single vertex $v$, then $d_T(v)=0$.
So, if $h(v)\leq k-2$, then $V_{k-1}(T)=\emptyset$ and
 by (i), $\sigma_h(T)= h(v)-d_T(v) \geq 3$,  implying that $\Phi_k(T) = \alpha(h(v) - d_T(v)) +1\geq 3 \alpha+1>1+\alpha$.
 Otherwise,
 $|V_{k-1}(T)|=1$ and
  $$\Phi_k(T)=\alpha(h(v)-d_T(v))-1= \alpha(k-1) -1 =\alpha-1+\frac{(k-2)^2}{2k-7}> 1+\alpha$$ for $k \geq 5$, again implying~\eqref{s(T)'}.
Suppose $T$ has two vertices, say $v_1$ and $v_2$, and $j\geq 1$ edges, so $d_T(v_1)=d_T(v_2)=j$, $|V(T)|=2$ and $m(T)=j-1$.
By (i)--(iii), $j\leq k-3$.
We can write $\Phi_k(T)$ in  the form $\Phi_k(T)=\sum_{i=1}^2 \Phi'_k(v_i)$, where
$$\Phi'_k(v_i)=\alpha(h(v_i)-d_T(v_i))+\frac{j-1}{2}+|V_{k-1}^-(T)\cap \{v_i\}|-|V_{k-1}(T)\cap \{v_i\}|.
$$
For each $i \in \{1,2\}$,
if $h(v_i)=k-1$, then since $j\leq k-3$ and $k\geq 5$,
\begin{eqnarray*}
\Phi'_k(v_i)&=& \alpha (k-1-j)+\frac{j-1}{2}+0-1=\alpha(k-1)-\frac{3}{2}-\frac{3j}{4k-14} \\
&\geq & \alpha(k-1) -\frac{3}{2}- \frac{3k-9}{4k-14}=\frac{2k^2-15k+34}{2(2k-7)}>  1.
\end{eqnarray*}
%{\PB $\geq 3 \alpha$ for $r \geq 8$}
If $h(v_i)<k-1$, then %and $j\geq 2$, then 
\[\Phi'_k(v_i)\geq
\begin{cases}
    \displaystyle
    0+\frac{j-1}{2}+1-0\geq\frac{3}{2} & \mbox{if $j\geq 2$}\\[2ex]
\alpha (3-j)+0+1-0>2 & \mbox{if $j=1$}
\end{cases}\]
Thus in all cases, $\Phi_k(T)=\sum_{i=1}^2 \Phi'_k(v_i)> 2\geq 1+\alpha,$
as claimed.
%{\PB This part should definitely be $\geq 3 \alpha$}

 If $T$ has more than two vertices, then $T$ 
 is a multiple of a cycle or a complete graph.
 In particular, $T$ is $j$-regular for some $1 \leq j\leq k-3$. Similarly to above, we can write
 $\Phi_k(T)=m(T)+\sum_{v\in V(T)} \Phi_k(v)$, where
 $$\Phi_k(v)=\alpha (h(v)-d_T(v))+|V_{k-1}^-(T)\cap \{v\}|-|V_{k-1}(T)\cap \{v\}|.$$
For every $v\in V_{k-1}^-(T)$, 
$\Phi_k(v)\geq 0+1-0=1.$
If $j\leq k-4$, then for every $v\in V_{k-1}(T)$, 
\begin{equation}\label{k-1}
\Phi_k(v) = \alpha (k-1-j)+0-1\geq 3\alpha-1
\end{equation}
In this case, $\Phi_k(T) \geq |V(T)|\geq 3\cdot\min\{1,3\alpha-1\} >  1+\alpha$, and we are done. Thus, we suppose $j=k-3$.
Then instead of~\eqref{k-1}, we get
\begin{equation}\label{k-1'}
\Phi_k(v) = \alpha (k-1-j)+0-1 =2\alpha-1=\frac{3}{2k-7}>0.
\end{equation}

So if
 $|V_{k-1}^-(T)|+m(T)\geq 2$, then 
$\Phi_k(T) >  2>1+\alpha$, as claimed. Otherwise,
$G$ is a $(k-3)$-regular graph and $|V_{k-1}^-(T)|\leq 1.$
It follows that $G=K_{k-2}$ and by~\eqref{k-1'},
$$\Phi_k(T) \geq  (k-2)\left(2\alpha -1
\right)= (k-2) \frac{3}{2k-7}= 3\alpha >1+\alpha.  $$
This proves the base of induction.

\medskip

Now, let $b \geq 1$, and suppose that the lemma holds for GDP-trees with at most $b$ blocks, and that $T$ has $b+1\geq 2$ blocks.
Choose a peripheral block $B$ in $T$ so that the cut vertex $u$ in $B$ has the minimum number of neighbors in $B$. 
%Let $R$ be a \emph{root block} in $T$, and let $B$ be a \emph{peripheral block}, that is, a block in $T$ whose distance from $R$ in the block cut tree of $T$ is maximum; the {\em block–cut tree} of a graph is the tree whose vertices represent the graph’s blocks and cut vertices and edges indicating which cut vertices belong to which blocks.
%Choose $B$ 
 Let $s=|V(B)|-1$.
 
Let $T'=T-(V(B)-u)$. Then $T'$ is a GDP-tree satisfying (i)--(iii) and has fewer blocks than $T$.
So, by induction, $\Phi_k(T')>1+\alpha$. 
Since $T$ is a GDP-tree, $B$ is $j$-regular for some $1\leq j\leq k-3$. 
In particular, $d_T(u)-d_{T'}(u)=j$, and the  multiplicity of every existing edge in $B$ is the same, say $q$.

Suppose first that $s=1$, so that $V(B)=\{u,v\}$. If $v\in V_{k-1}^-(T)$,
then $\Phi_k(T)\geq \Phi_k(T')-\alpha j+(j-1)+1-0\geq \Phi_k(T')$. 
%{\PB I think this is $\geq 0 $ since $\alpha=1$ when $k=5$}.
If  $v\in V_{k-1}(T)$, then since $j\leq k-3$, 
$$\Phi_k(T)- \Phi_k(T')= -\alpha j+(j-1)-1+\alpha(k-1-j)=
j(1-2\alpha)-2+\alpha(k-1)$$
$$\geq (k-3)(1-2\alpha)-2+\alpha(k-1)=-\frac{3(k-3)}{2k-7}-2+\frac{(k-1)(k-2)}{2k-7}=
\frac{k^2-10k+25}{2k-7}\geq 0.$$

On the other hand, suppose $s\geq 2$. Then
\begin{equation}\label{2j}
\mbox{$\wt B$ is $t$-regular for some $t\geq 2$, and $j=qt\geq 2q$.}
\end{equation}
For each $v\in (V(B)-u)\cap  V_{k-1}(T)$,
$v$ contributes $\alpha (k-1-j)+0-1$ to
$\Phi_k(v)$, 
and for each $v\in (V(B)-u)\cap  V_{k-1}^-(T)$,  $v$ contributes at least $1$ to $\Phi_k(v)$.
For two adjacent vertices $x,y\in V(B)$, the $q$ edges in $E_B(x, y)$ contribute $q-1$ to $\Phi_k(T)- \Phi_k(T')$ due to the increase in $m(T)$.
Moreover, for each neighbor $x\in V(B)$ of $u$,  the $q$ edges in $E_G(u,x)$ contribute an additional $-\alpha q$ to $\Phi_k(T)- \Phi_k(T')$ due to the decrease in $h(u)-d_T(u)$.
Note that $s\geq \frac{j}{q}$,
%{\PB Is this inequality necessary?}, 
and $B$ has at least $(s+1)q$ edges, so $m(B)\geq (s+1)(q-1)$.
% , so $j(s+1)\geq 2(s+1)q$.
% Therefore, 
% \[\Phi_k(T)-\Phi_k(T')\geq s\cdot\min\{\alpha(k-1-j)-1,1\}+(s+1)(q-1)-\frac{j}{q}(q\alpha).\]

If $j\leq k-4$, then by~\eqref{2j}, $k\geq 4+j\geq 4+t\geq 6$. In this case,
each vertex $v \in V(B) - u$ contributes at least 
$\min\{\alpha(k-1-j)-1,1\}\geq\min\{3\alpha-1,1\}\geq\alpha$ to $\Phi_k(T)$. Therefore,
\begin{eqnarray*}
\Phi_k(T)-\Phi_k(T')&\geq& s\alpha+(s+1)(q-1)-\frac{j}{q}(q\alpha) 
=  \alpha(s-j)+(s+1)(q-1) \\  &\geq& \alpha(s-j)+j-s+q-1=(j-s)(1-\alpha)+q  -1 \geq 0,
\end{eqnarray*}
    since the third 
    expression is nonnegative when $s \geq j$, and the last expression is nonnegative when $j \geq s$.

Otherwise, $j=k-3$, so
each vertex $v \in V(B) - u$ contributes at least 
$\min\{\alpha(k-1-j)-1,1\}=\min\{2\alpha-1,1\}=2\alpha-1$ to $\Phi_k(T)$.
If $q\geq 2$, 
then $j  = qt \geq 4$.
Then,  by~\eqref{2j}, $k=3+j \geq 7.$
If $s=2$, then $\wt{B}$ is a $3$-cycle, so $j$ is even and $m(B)=\frac{3j}{2}-3\geq j-1$ since $j\geq 4$.
If $s\geq 3$, then 
$m(B) = \frac{j-t}{2}(s+1) \geq  \frac j4 (3+1) =  j$.
Hence, $m(B)\geq j-1$.

So, $m(B)-\alpha (d_T(u)-d_{T'}(u))\geq j-1-\alpha j=(j-1)(1-\alpha)-\alpha$.
Therefore, since $s\geq 2$ and $j\geq 3$, 
$$\Phi_k(T)- \Phi_k(T')\geq (j-1)(1-\alpha)-\alpha+s(2\alpha-1)\geq
2(1-\alpha)-\alpha+4\alpha-2=\alpha
>0.$$

The remaining possibility is that $j=k-3$ and $q=1$, which means  that  
$B=K_{k-2}$ or $k = 5$ and $B$ is a cycle. 
In the latter case, each $v \in T'$ contributes at least $1\cdot (3-2) = 1$, since $\alpha = 1, h(v)\geq 3$, and $j = 2$. Thus $\Phi_k(T)- \Phi_k(T') > 0$.
In the former case, we have
$d_{T'}(u) = d_T(u)-j\leq 2$, so write $N_{T'}(u)=\{u_1,u_2\}$ (possibly, $u_2=u_1$).

We say a tree $T_0$ is {\em good} if $T_0$ is a GDP-tree satisfying (i)--(iii) with fewer blocks than $T$ and $V(T_0)\subseteq V(T)-(V(B)-u)$.
 By induction, $\Phi_k(T_0)>1+\alpha$. 
 In the remainder of the proof, we will demonstrate $\Phi_k(T)-\Phi_k(T_0)\geq 0$, which proves the lemma. 
 We will often use the fact that 
 \begin{enumerate} \label{eqn:contribute}
 \item [(*)] 
  Each $v\in (V(B)-u)\cap V_{k-1}^-(T)$  contributes $1$ 
  to $|V_{k-1}^-(T)\cap \{v\}|$, and  \\
  each $v\in (V(B)-u)\cap V_{k-1}(T)$ contributes $2\alpha-1$
   to $\alpha (h(v)-d_{T}(v) )-|V_{k-1}(T)\cap \{v\}|$,
 \end{enumerate}
so the vertices in $V(B)-u$ collectively contribute at least $(k-3)\min\{1,2\alpha-1\}=(k-3)(2\alpha-1)$ to $\Phi_k(T)$. 

 Write $T'' = T'- u$ and observe that $T''$ is a good tree.
Since $B$ is peripheral, $u_1$ and $u_2$ are in the same block, say $B_0$. 
Hence, $B_0$ is either a $K_2$ or a cycle.

\medskip
{\bf Case 1:} Assume $u_1=u_2$, so $N_{T'}(u)=\{u_1\}$. 
Note that $u$ contributes no less than $-1$ to $\Phi_k(T)$.

If $d_T(u)=k-2$, then $h(u_1)-d(u_1)$ decreases exactly by $1$. Together with (*),
$$\Phi_k(T)- \Phi_k(T'')\geq -\alpha-1 +(k-3)(2\alpha-1) 
= \alpha (2k-7)-(k-2)= 0.
$$

If $d_T(u)=k-1$, then $m(B_0)=1$ and $d_T(u_1)-d_{T''}(u_1)=2$.
 Together with (*),
$$\Phi_k(T)- \Phi_k(T'')\geq -2\alpha+1-1 +(k-3)(2\alpha-1)
= \alpha(2k-8)-(k-3)
= \frac{k-5}{2k-7} \geq  0.
$$

For the remaining cases, assume $u_1\neq u_2$, so $d_T(u)=k-1$. 
As $B$ is chosen so that the cut vertex of $B$ has minimum degree in $B$ over all cut vertices of peripheral blocks of $T$,
$B_0$ is not peripheral, and thus $T$ has at least three blocks.

\medskip
{\bf Case 2.1:} $|V(B_0)|\geq 4$, so $B_0$ is a cycle of length at least $4$, and $u_1u_2$ is not an edge.
Obtain $T_1$ from $T''$ by adding the edge $u_1u_2$, so $T_1$ is a good tree. 
Then $u$ contributes $-1$ to $\Phi_k(T)- \Phi_k(T_1)$.  
Together with (*),
$$\Phi_k(T)- \Phi_k(T_1)\geq -1 +(k-3)(2\alpha-1) 
= \alpha (2k-6)-(k-2)> 0.
$$

\medskip
{\bf Case 2.2:} $|V(B_0)|=3$ and $d_T(u_1)=2$. 
Obtain $T_2$ from $T''$ by deleting $u_1$, so that $T_2$ is a good tree. 
Then  $d_T(u_2)-d_{T_2}(u_2)=2$, and $u_1$ contributes to $\Phi_k(T)- \Phi_k(T_2)$ either $\alpha (k-1-2)-1$ (when $h(u_1)=k-1$) or at least $1$ (when $h(u_1) \leq k-2$). 
As in Case 2.1, $u$ contributes $-1$ to $\Phi_k(T)- \Phi_k(T_2)$. 
Together with (*),
$$\Phi_k(T)- \Phi_k(T_2)\geq -2\alpha +(3\alpha-1)-1 +(k-3)(2\alpha-1)
= \alpha (2k-5)-(k-1)=\frac{3}{2k-7}> 0.
$$

By the symmetry between $u_1$ and $u_2$, the last case is as follows.
\medskip

{\bf Case 2.3:}  $|V(B_0)|=3$ and both $u_1$ and $u_2$ are cut vertices. 
For $i\in\{1,2\}$, let $B_i$ be a block of $T$ containing $u_i$ and distinct from $B_0$. Since $B$ is peripheral, at least one of $B_1,B_2$ is also peripheral, say $B_1$. Then by the choice of $B$,
$B_1$ is $(k-3)$-regular, and so by the previous cases, $B_1=K_{k-2}$.
In this case, $T_3=T-V(B)-V(B_1)$ is a good tree. 
Then  $d_T(u_2)-d_{T_3}(u_2)=2$, and each of $u,u_1$ contributes $-1$ to $\Phi_k(T)- \Phi_k(T_2)$.  
Together with (*) for both $B$ and $B_1$,
$$\Phi_k(T)- \Phi_k(T_3)\geq -2\alpha-2  +2(k-3)(2\alpha-1) 
= 2\alpha (2k-7)-2(k-2)= 0.
$$
This proves the lemma.
\end{proof}

\section{Properties of a minimum counterexample $G$}\label{sec:propG}

Suppose that Theorem~\ref{thm:stronger} does not hold for some $k\geq 5$. Then there are a multigraph $G$, a function $h: V(G)\to [0,k-1]$, and an $h$-cover $(H,L)$ 
such that

\begin{enumerate}[(A)]
    \item $G$ has no  $(H,L)$-coloring, but every proper subgraph $G'$ of $G$ is DP $h\vert_{V(G')}$-colorable,
 and
    \item none of the claims (1)--(3) of Theorem~\ref{thm:stronger} holds for $G$ and $h$.
\end{enumerate}
 
Among such quadruples $(G,h,H,L)$, choose one    
   with minimum $|G|$ and fix it for the rest of the paper. 

% \iffalse
% For the rest of the paper, we fix a counterexample $G$, which is a multigraph with a function $h:V(G)\to [0,k-1]$, to Theorem~\ref{thm:stronger} with minimum $|V(G)|$.
% We also fix an $h$-cover $(H,L)$ of $G$ such that $G$ does not have an $(H,L)$-coloring.
% \fi
For every $uv\in E(G)$, we assume $E_H(L(u), L(v))$ is the union of $|E_G(u, v)|$ maximal matchings.
We write $L(v) = \{1_v, 2_v, \dots, t_v\}$, where $t = |L(v)|$.

\subsection{General observations}

We make some observations about our minimum counterexample $G$ and the potentials of its subgraphs.

\begin{obs}
\label{obs:zero}
    $G$ has no vertex $v$ for which $h(v) = 0$. 
\end{obs}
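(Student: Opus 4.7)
The plan is to argue by contradiction: suppose some $v \in V(G)$ has $h(v) = 0$, and derive a contradiction from the $h$-minimality of $G$ or from its status as a counterexample.

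First I would observe that, because $h(v) = 0$, we may construct an $h$-cover of any subgraph of $G$ containing $v$ in which the list $L(v)$ is literally empty, so any such subgraph trivially fails to have an $(H',L')$-coloring. This is the key mechanism.

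Then I would split into two cases based on $|V(G)|$. If $|V(G)| \geq 2$, pick any other vertex $u \in V(G) \setminus \{v\}$ and consider the proper subgraph $G' = G - u$, which still contains $v$. By $h$-minimality of $G$, the subgraph $G'$ must be DP $h|_{V(G')}$-colorable; but by the observation above, taking a cover of $G'$ with $L'(v) = \emptyset$ prevents any $(H',L')$-coloring, a contradiction. If instead $|V(G)| = 1$, so $G$ is a single isolated vertex $v$ with $h(v) = 0$, then $\rho_h(G) = h(v)\lambda - 2 = -2$ by the case $h(v) \in \{0,1\}$ in the definition of $\rho$. Since this single-vertex graph is obviously neither $K_k$ nor one of the $k=5$ exceptional graphs, conclusion~3 of Theorem~\ref{thm:stronger} would be satisfied with equality, contradicting the assumption that $G$ is a counterexample.

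There is no real obstacle here: the argument is a direct consequence of the definition of $h$-minimality plus the convention that an $h$-cover need only satisfy $|L(v)| \geq h(v)$ (so an empty list is permissible when $h(v) = 0$). The only mild care required is handling the degenerate one-vertex case separately, since in that case there are no proper subgraphs other than the empty graph to leverage.
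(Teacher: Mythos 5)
Your proof is correct and takes essentially the same approach as the paper: in both cases the argument is to find a proper subgraph containing $v$ (the paper takes $G[\{v\}]$, you take $G-u$ for some other vertex $u$, but both work equally well) and observe it cannot be DP $h$-colorable because $h(v)=0$ permits an empty list, contradicting $h$-minimality; the one-vertex case is handled identically by computing $\rho_h(G)=-2$.
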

\begin{proof}
    If $V(G) = \{v\}$, then $\rho_h(G) = \rho_h(v) = -2$, so $G$ is not a counterexample. If $|V(G)| \geq 2$, then $G[\{v\}]$ is a proper subgraph of $G$ that is not DP $h$-colorable, contradicting the assumption that $G$ is $h$-minimal.
\end{proof}

\begin{obs}
\label{obs:reducible}
    For each induced subgraph $X \subseteq G$, $X$ is not DP $(h-d_G + d_X)$-colorable.
\end{obs}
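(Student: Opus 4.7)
The plan is to argue by contradiction using the $h$-minimality of $G$: if some induced subgraph $X$ were DP $(h - d_G + d_X)$-colorable, I would use this together with a coloring of the complement to build an $(H,L)$-coloring of $G$, contradicting the choice of $(H,L)$. The boundary cases are trivial: $V(X) = \emptyset$ is vacuous, and if $V(X) = V(G)$ then $d_G - d_X \equiv 0$ and the claim reduces to the standing assumption that $G$ is not $(H,L)$-colorable. So I would assume that $Y := V(G) \setminus V(X)$ is a nonempty proper subset of $V(G)$.

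First, I would color $Y$. Since $G[Y]$ is a proper subgraph of $G$ and $G$ is $h$-minimal, $G[Y]$ is DP $h|_Y$-colorable, and the restriction of $(H,L)$ to the vertices of $Y$ is an $h$-cover of $G[Y]$, so it admits some $(H,L)|_Y$-coloring $\phi$. Next, for each $v \in V(X)$, I would restrict $L(v)$ to the set $L'(v)$ of colors in $L(v)$ having no $H$-neighbor in $\{\phi(u) : u \in Y \cap N_G(v)\}$. The key counting step is that each of the $|E_G(v,u)|$ parallel edges from $v$ to $u$ contributes one matching in $H$ between $L(v)$ and $L(u)$, so $\phi(u)$ blocks at most $|E_G(v,u)|$ elements of $L(v)$; summing over $u \in Y \cap N_G(v)$ shows that at most $d_G(v) - d_X(v)$ colors are lost, giving $|L'(v)| \geq h(v) - d_G(v) + d_X(v)$. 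Thus $(H[L'(V(X))], L')$ is an $(h - d_G + d_X)$-cover of $X$.

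Finally, I would invoke the assumed DP $(h - d_G + d_X)$-colorability of $X$ to extract an $(H[L'(V(X))], L')$-coloring $\psi$ of $X$, and then glue: $\phi \cup \psi$ is an $(H,L)$-coloring of $G$, since the colors chosen by $\psi$ lie in $L'$ and are therefore non-adjacent to every $\phi$-choice on $Y$, while the internal edges of $X$ are handled by $\psi$ itself. This contradicts the non-$(H,L)$-colorability of $G$. The argument is essentially a routine extension; the only point requiring care is the multigraph bookkeeping in the second paragraph, which is why the statement uses the multi-degree difference $d_G - d_X$ rather than a count of simple neighbors.
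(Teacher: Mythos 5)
Your argument is correct and follows essentially the same route as the paper: both use $h$-minimality to obtain an $(H,L)$-coloring of $G - V(X)$, restrict each $L(v)$ for $v \in V(X)$ to the colors unblocked by that coloring, observe that at most $d_G(v) - d_X(v)$ colors are lost (one per incident matching crossing into $V(G)\setminus V(X)$), and conclude that $X$ has a cover of the required size with no coloring. One minor slip: the case $V(X)=\emptyset$ is not vacuous --- the empty graph is trivially DP $h$-colorable for any $h$, so strictly speaking the observation requires $X$ nonempty, a convention the paper leaves implicit.
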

\begin{proof}
     As $G$ is $h$-minimal, $G-V(X)$ has an $(H,L)$-coloring $\varphi$. 
    For each $x \in V(X)$, let $L'(x) \subseteq L(x)$ consist of the colors of $L(x)$ that are not adjacent to a color of $\varphi$.
    Then, let $H'$ be the subgraph of $H$ induced by the set $\bigcup_{x \in V(X)} L'(x)$. As $G$ has no $(H,L)$-coloring,  $\varphi$ cannot be extended to $X$; therefore, $X$ has no $(H',L')$-coloring. As $|L'(x)| \geq h(x) - d_G(x) + d_X(x) $ for each $x \in V(X)$,  $X$ is not DP $(h-d_G+d_X)$-colorable.
\end{proof}

\begin{obs}
\label{obs:list-degree}
    For each vertex $v \in V(G)$, $h(v) \leq d(v)$.
    
\end{obs}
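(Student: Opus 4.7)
My plan is to derive this as a one-line consequence of Observation~\ref{obs:reducible}, and then, for clarity, to record the direct greedy-extension argument that underlies it.

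The cleanest approach is to instantiate Observation~\ref{obs:reducible} with the single-vertex induced subgraph $X = \{v\}$. Since $d_X(v) = 0$, the observation asserts that $\{v\}$ is not DP $(h(v) - d_G(v))$-colorable. However, a lone isolated vertex is trivially DP $t$-colorable for every $t \geq 1$: any $t$-cover assigns a non-empty list to the single vertex, from which any color may be selected. Hence we must have $h(v) - d_G(v) \leq 0$, giving $h(v) \leq d_G(v)$ as desired.

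For the reader who prefers to see the argument in its naked form (bypassing Observation~\ref{obs:reducible}), I would proceed as follows. Suppose for contradiction that $h(v) \geq d_G(v) + 1$. By the $h$-minimality of $G$, the proper induced subgraph $G - v$ admits an $(H', L')$-coloring $f$, where $(H', L')$ is the restriction of the fixed cover $(H,L)$ to $V(G) - v$. For each neighbor $u$ of $v$, the edges of $H$ between $L(u)$ and $L(v)$ form $|E_G(u,v)|$ matchings, so the color $f(u)$ forbids at most $|E_G(u,v)|$ colors in $L(v)$. Summing over all neighbors of $v$ yields at most $\sum_{u \in N_G(v)} |E_G(u,v)| = d_G(v)$ forbidden colors in $L(v)$. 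Since $|L(v)| \geq h(v) > d_G(v)$, some color in $L(v)$ remains available and extends $f$ to an $(H,L)$-coloring of $G$, contradicting the assumption that $G$ has no $(H,L)$-coloring.

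There is no real obstacle here: the statement is the standard greedy-extension fact for critical graphs, and it serves primarily as a basic building block for the structural analysis of $G$ in the remainder of Section~\ref{sec:propG}.
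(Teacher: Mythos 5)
Your first paragraph is precisely the paper's proof: instantiate Observation~\ref{obs:reducible} at $X = G[\{v\}]$ and note that an isolated vertex with a non-empty list is trivially colorable, which forces $h(v) - d_G(v) + d_X(v) = h(v) - d(v) \leq 0$. Your second paragraph is a sound but redundant unrolling of the proof of Observation~\ref{obs:reducible} specialized to a single vertex, so the whole argument matches the paper's approach.
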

\begin{proof}
We let $X = G[v]$. 
By Observation~\ref{obs:reducible},
$X$ is not DP $(h-d_G+d_X)$-colorable. This implies that $h(v) - d_G(v) + d_X(v) = h(v) - d(v) \leq 0$, completing the proof.
\end{proof}

We say that a vertex $v \in V(G)$ is \emph{low} if $h(v) = d(v)$.

\begin{lemma}
\label{lem:ERT}
    If $U \subseteq V(G)$ is a set of low vertices, then each block of $G[U]$ is isomorphic to $s K_{t}$ or $s C_{t}$, for some $t$ and $s$.
\end{lemma}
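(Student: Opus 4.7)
The plan is to reduce the statement to the characterization of non-DP-degree-colorable multigraphs provided by Theorem~\ref{thm:deg-choosable}. Since a graph all of whose connected components are GDP-trees is exactly a GDP-forest (whose blocks are by definition $K_t^s$ or $C_t^s$), it suffices to show that each connected component of $G[U]$ is a GDP-tree.

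To do this, I would fix a connected component $C$ of $G[U]$. Since $G[U]$ is induced in $G$, so is $C$. I plan to apply Observation~\ref{obs:reducible} to $X := C$, which yields that $C$ is not DP $(h - d_G + d_C)$-colorable. The key point is now to exploit the hypothesis that every vertex of $U$ is low: for every $v \in V(C) \subseteq U$, we have $h(v) = d_G(v)$, so $h(v) - d_G(v) + d_C(v) = d_C(v)$. In other words, $C$ is not DP degree-colorable.

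Applying Theorem~\ref{thm:deg-choosable} to the connected multigraph $C$ then forces $C$ to be a GDP-tree, i.e.\ each of its blocks is isomorphic to $K_t^s$ or $C_t^s$ for some $t$ and $s$. Running this argument over every connected component of $G[U]$ produces the desired conclusion, since blocks of $G[U]$ are the same as blocks of its components.

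I do not anticipate a real obstacle here: Observation~\ref{obs:reducible} is already stated in a form tailor-made for this application, and the low-vertex hypothesis is exactly what is needed to convert ``$(h-d_G+d_C)$-colorable'' into ``$d_C$-colorable.'' The only delicate point is making sure that $C$ is legitimately an induced subgraph of $G$ (so that Observation~\ref{obs:reducible} applies verbatim) and that the restriction of $h$ to $V(C)$ is what is being used in the hypothesis of Theorem~\ref{thm:deg-choosable}; both of these are immediate from the definitions.
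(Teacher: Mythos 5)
Your proof is correct and follows essentially the same approach as the paper: both rely on coloring $G$ minus a piece of $G[U]$, using lowness to turn the residual lists into a degree list assignment, and then invoking Theorem~\ref{thm:deg-choosable}. The only cosmetic difference is that you package this via Observation~\ref{obs:reducible} applied to a connected component of $G[U]$, whereas the paper argues directly (without citing that observation) at the level of a single offending block; the two are interchangeable here since a connected component is a GDP-tree exactly when all its blocks have the required form.
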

\begin{proof}
    Suppose that $G[U]$ contains a block $B$ that is isomorphic to neither $s K_{t}$ nor $s C_{t}$.
    Let $(H,L)$ be an $h$-cover of $G$ for which $G$ has no $(H, L)$-coloring.
    As $G$ is $h$-minimal, $G -V(B)$ has an $(H,L)$-coloring $\varphi$. 
    Now, for each $v \in V(B)$, let $L'(v) \subseteq L(v)$ consist of the colors in $L(v)$ with no neighbor in $\varphi$. Each $v \in V(B)$ is low, so $|L'(v)| \geq d_{B}(v)$.
    Then, $B$ has an $(H,L')$-coloring by Theorem~\ref{thm:deg-choosable}, and thus $G$ is $(H,L)$-colorable, a contradiction.
\end{proof}

\begin{lemma}
\label{lem:j(k-2)}
For each $j \geq 1$, if $S \subsetneq V(G)$ satisfies 
$|E_G(S,\overline S)| \geq j$, then
$\rho_{G,h}(S) \geq j(\lambda -2) + 1$. 
\end{lemma}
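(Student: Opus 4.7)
The plan is to apply Observation~\ref{obs:reducible} with $X = G[S]$ and then exploit the minimality of $G$ as a counterexample. Since $\overline S$ is nonempty and $G$ is $h$-minimal, $G[\overline S]$ admits an $(H,L)$-coloring $f$; reducing the lists on $S$ by removing colors adjacent in $H$ to $f(\overline S)$ gives a cover of $G[S]$ with no coloring, whose list sizes are at least $h^\circ(v) := \max\{h(v) - |E_G(v, \overline S)|,\,0\}$. Equivalently, $G[S]$ is not DP $h^\circ$-colorable, so we may extract a subgraph $G^* \subseteq G[S]$ that is $h^\circ|_{V(G^*)}$-minimal. Writing $R := V(G^*) \subseteq S$, we have $|R| \leq |S| < |V(G)|$, and the minimality of $G$ forces $G^*$ to satisfy Theorem~\ref{thm:stronger}: either (a) $G^* = K_k$ with $h^\circ \equiv k-1$ on $R$; (b) $k=5$ and $G^*$ is exceptional with $h^\circ \equiv 4$ on $R$; or (c) $\rho_{G^*, h^\circ}(R) \leq -2$.

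In cases (a) and (b), the maximal value of $h^\circ$ together with $h^\circ(v) \leq h(v) \leq k-1$ forces $h(v) = h^\circ(v)$ and $|E_G(v, \overline S)| = 0$ for each $v \in R$. Hence all $j$ cut edges go from $Q := S \setminus R$ to $\overline S$. A direct calculation using Lemma~\ref{lem:Kr-sub-rho} or Lemma~\ref{lem:rho5} gives the explicit value of $\rho_h(R)$ (equal to $k$ in case (a); in $\{-1,0,5\}$ in case (b)). If $Q = \emptyset$ then $j = 0$, contradicting $j \geq 1$, so $Q \neq \emptyset$; applying the lemma inductively (strong induction on $|S|$) to $Q$ and combining with the cross-edge contributions—controlled by reapplying Observation~\ref{obs:reducible} to $G[R]$, which forces $|E_G(R,Q)|$ to be small—closes the argument in these cases.

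In case (c), the key inequality is the pointwise vertex estimate
\[
  \rho_h(v) - \rho_{h^\circ}(v) \;\geq\; \lambda\bigl(h(v) - h^\circ(v)\bigr),
\]
with additional boosts of $+2$ at the $k-1 \to k-2$ threshold and $+1$ at the $2 \to 1$ threshold. Summing over $v \in R$ captures at least $\lambda \cdot |E_G(R, \overline S)|$ plus these boundary boosts; adding the nonnegative contributions $\rho_h(v) \geq \lambda - 2$ for each $v \in Q$ (valid since $h(v) \geq 1$), together with the edge differences between $G[S]$ and $G^*$, is intended to yield $\rho_{G,h}(S) \geq j(\lambda - 2) + 1$.

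The main obstacle lies in case (c): the conclusion $\rho_{G^*, h^\circ}(R) \leq -2$ is only an upper bound, and so the naive decomposition $\rho_{G,h}(R) = \rho_{G^*, h^\circ}(R) + (\text{increments})$ does not immediately yield a useful lower bound on $\rho_{G,h}(R)$. I expect to close this gap by choosing $G^*$ to be vertex-minimal (so that $R$ and the slack in $\rho_{G^*,h^\circ}(R)$ are both controlled), together with the boundary boosts and the strong induction on $|S|$; an alternative route is a contradiction argument, where assuming $\rho_h(S) < j(\lambda-2) + 1$ is used to construct a smaller counterexample to Theorem~\ref{thm:stronger} by contracting $\overline S$ to a single vertex with an appropriate list size.
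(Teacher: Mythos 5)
There is a genuine gap, and it is exactly the one you flag yourself in case (c): the bound $\rho_{G^*,h^\circ}(R)\leq -2$ is an \emph{upper} bound, and nothing in your setup prevents $\rho_{G^*,h^\circ}(R)$ from being arbitrarily negative, so decomposing $\rho_{G,h}(S)$ as $\rho_{G^*,h^\circ}(R)$ plus pointwise increments cannot yield the \emph{lower} bound you need. The fixes you sketch do not close this. Taking $G^*$ vertex-minimal does not control the slack in the inequality $\rho_{G^*,h^\circ}(R)\leq -2$ (a vertex-minimal $h^\circ$-minimal subgraph can still have very small potential). The ``boundary boosts'' are additive constants per boundary vertex and cannot overcome an unbounded deficit. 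And the contraction route needs precisely a statement of the type you are trying to prove to control the potential of the contracted graph. Your cases (a) and (b) are also handled in a roundabout way: since $h^\circ=h$ on $R$ in those cases, $G^*$ would be a proper subgraph of $G$ that is $h|_{R}$-minimal, which directly contradicts the $h$-minimality of $G$; no induction on $Q$ is needed.

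More fundamentally, your proof works ``inward'': you try to bound $\rho_{G,h}(S)$ from below by dissecting $G[S]$ into a minimal piece $R$ and a remainder $Q$. The paper's proof works ``outward'': it fixes a hypothetical counterexample $S$ (maximal for its $j$), applies Observation~\ref{obs:reducible} to $G'=G-S$ (the \emph{complement}), extracts an $h'$-minimal subgraph on a set $U\subseteq \overline S$, and shows that $S':=S\cup U$ has potential at most $\rho_{G,h}(S)-\ell(\lambda-2)-2$ where $\ell=|E_G(S,U)|\geq 1$. Then either $S'=V(G)$, giving $\rho_h(G)\leq -2$ (contradicting that $G$ is a counterexample to Theorem~\ref{thm:stronger}), or one can run a second induction on $j$ using the leftover $j-\ell\geq 1$ boundary edges of $S'$. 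The reason the outward direction works is that adding the minimal piece of the complement always \emph{decreases} potential by a controlled amount, whereas trying to peel the minimal piece out of $S$ only gives you an uncontrolled upper bound on the core. If you want to salvage your route, you would essentially have to reconstruct the paper's argument anyway; I recommend switching to the complement-growth strategy with induction on $j$ and maximality of $S$.
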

\begin{proof}
    Suppose that the lemma is false, and let $j$ be the smallest value for which the lemma does not hold.
    Then, $G$ has a set $S \subsetneq V(G)$ for which $\rho_{G,h}(S) \leq j(\lambda -2)$ and
    $|E_G(S,\overline S)| \geq j$.
    We choose $S$ to be a counterexample to the lemma with largest size.

Let $G' = G - S$. For each vertex $v \in V(G')$, let $h'(v) =\max\{0, h(v) - d_G(v) + d_{G'}(v) \} = \max\{0, h(v) - |E_G(v,S)|\}$.
By Observation~\ref{obs:reducible}, $G'$ is not DP $h'$-colorable.
Therefore, there exists $U \subseteq V(G')$ for which $G'[U]$ has a spanning $h'$-minimal subgraph. As $G$ is $h$-minimal, it follows that $(G,h)$ and $(G',h')$ do not agree on $U$; therefore, $U$ contains a neighbor $u$ of $S$. As $h'(u) < k-1$, and as $G$ contains no $K_k$ subgraph,
$\rho_{G',h'}(U) \leq -2$.

Now, consider the set $S':=U \cup S$, and write $\ell = |E_G(S,U)|$. As $U$ contains a neighbor of $S$, $\ell \geq 1$. 
 As $\rho_h(v)-\rho_{h'}(v)\leq (\lambda+2)|E_G(v, S)|$ for each $v\in U$, 
$\rho_{G,h}(U) \leq -2 + \ell (\lambda + 2)$.
We have 
\begin{equation}
\label{eqn:j(k-2)}
\rho_{G,h}(S') \leq 
\rho_{G,h}(U) + \rho_{G,h}(S) - 2\lambda \ell \leq -2 + \ell(\lambda + 2) - \ell(2\lambda) + \rho_{G,h}(S) = -2 - \ell(\lambda - 2) + \rho_{G,h}(S).
\end{equation}

Now, suppose $j = 1$.
Then, $\rho_{G,h}(S) \leq \lambda -2$, so
\[\rho_{G,h}(S') \leq -2 - \ell(\lambda -2) + \lambda -2 = -2 + (1-\ell)(\lambda -2) \leq -2.\]

Thus, it follows from the maximality of $S$ that $S' = V(G)$. Therefore, $\rho_h(G) \leq -2$, and $G$ is not a counterexample to Theorem~\ref{thm:stronger}, a contradiction. This completes the case that $j = 1$.

    Next, suppose that $j \geq 2$. Then,
    as $\rho_{G,h}(S) \leq j(\lambda -2)$,
    (\ref{eqn:j(k-2)}) implies that 
    \begin{equation}
    \label{eqn:S'}
    \rho_{G,h}(S')     \leq -2  - \ell (\lambda -2) + j(\lambda -2) 
    =-2 + (j - \ell)(\lambda -2).
    \end{equation}
    If $S' = V(G)$, then~\eqref{eqn:S'} implies that $\rho_h(G) \leq -2$, and $G$ is not a counterexample to Theorem~\ref{thm:stronger}.
    Otherwise, $|E_G(S', \overline{S'})| \geq 1$, so the $j=1$ case implies that $\rho_{G,h}(S') \geq \lambda -1$. In both cases by~\eqref{eqn:S'}, $\ell < j$, so that there is at least one edge in $E_G(S, V(G') \setminus U)$.

    Now, since $|E_G(S,U)| = \ell$ and $|E_G(S, \overline S)| \geq j$, it follows that $|E_G(S', \overline{S'})| \geq |E_G(S, V(G') \setminus U)| \geq j - \ell$.
    As $1 \leq j - \ell \leq j-1$,
     the minimality of $j$
     tells us that $\rho_{G,h}(S') > (j-\ell)(\lambda -2)$, contradicting (\ref{eqn:S'}). 
     %{\AK A nice fact and a nice proof without introducing 1-vertices.}
\end{proof}

We can make several observations from Lemma~\ref{lem:j(k-2)}.

\begin{obs}
\label{obs:geq-1}
Each vertex subset $U \subseteq V(G)$ satisfies $\rho_{h}(U) \geq -1$. 
\end{obs}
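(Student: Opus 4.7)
The plan is to split on the three cases $U = \emptyset$, $U = V(G)$, and $\emptyset \neq U \subsetneq V(G)$, and to use Lemma~\ref{lem:j(k-2)} only in the last case. The first two are essentially boundary cases, while the middle case requires one additional small observation: that $G$ itself is connected.

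For the boundary cases: when $U = \emptyset$, both sums defining $\rho_{G,h}(U)$ are empty, so $\rho_h(U) = 0 \geq -1$. When $U = V(G)$, I will use that $G$ is a counterexample to Theorem~\ref{thm:stronger}, so conclusion 3 fails; hence $\rho_h(G) > -2$. Since every $\rho_h(v)$ and every $\rho_h(xy)$ is an integer by the case definitions of $\rho$, the value $\rho_h(G)$ is an integer, and thus $\rho_h(G) \geq -1$.

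For the main case $\emptyset \neq U \subsetneq V(G)$, I first observe that $G$ must be connected. Indeed, if $G$ had two components, then for any $h$-cover $(H,L)$ the restriction to each component is an $h$-cover of that component, and since each component is a proper subgraph of $G$ it is DP-$h$-colorable by $h$-minimality; combining the colorings yields an $(H,L)$-coloring of $G$, contradicting that $G$ is not DP-$h$-colorable. Hence for any proper nonempty $U \subsetneq V(G)$ we have $|E_G(U,\overline U)| \geq 1$, and Lemma~\ref{lem:j(k-2)} applied with $j=1$ gives $\rho_{G,h}(U) \geq \lambda - 1$. Since $\lambda = \lceil (k^2-7)/(2k-7)\rceil \geq 6$ for $k \geq 5$, we get $\rho_h(U) \geq \lambda - 1 \geq -1$.

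I do not anticipate a real obstacle here: the only subtle point is keeping track that $\rho_h$ is integer-valued, so the strict inequality $\rho_h(G) > -2$ coming from the choice of $G$ as a counterexample to Theorem~\ref{thm:stronger} upgrades to $\rho_h(G) \geq -1$. All remaining cases reduce to a direct appeal to Lemma~\ref{lem:j(k-2)}.
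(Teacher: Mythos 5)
Your proposal is correct and takes essentially the same approach as the paper: apply Lemma~\ref{lem:j(k-2)} with $j=1$ to nonempty proper subsets, and handle $U = V(G)$ via the counterexample assumption plus integrality of $\rho_h$. You add two details that the paper leaves implicit—the case $U=\emptyset$ and the connectedness of $G$ (needed so that $|E_G(U,\overline U)|\geq 1$)—but these only make the same argument more explicit rather than changing its route.
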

\begin{proof}
If $1 \leq |U| \leq |V(G) | - 1$, then $\rho_h(U) \geq \lambda -1$ by Lemma~\ref{lem:j(k-2)}. If $|U| = |V(G)|$, then $\rho_h(U) = \rho_h(G) \geq -1$ by our assumption that $G$ is a counterexample to Theorem~\ref{thm:stronger}.
\end{proof}

\begin{obs}
\label{obs:geqk-1}
    For every nonempty proper subset $S\subsetneq V(G)$, $\rho_G(S) \geq \lambda -1$.
\end{obs}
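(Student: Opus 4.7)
The plan is to reduce the observation directly to Lemma~\ref{lem:j(k-2)} applied with $j=1$. That lemma tells us that any nonempty proper subset $S$ satisfying $|E_G(S,\overline S)|\geq 1$ already has $\rho_{G,h}(S) \geq 1\cdot(\lambda-2)+1 = \lambda-1$. So the entire content of the observation reduces to verifying that every nonempty proper subset of $V(G)$ sends at least one edge to its complement, i.e., that $G$ is connected.

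To prove connectedness, I will use the $h$-minimality of $G$. Suppose for contradiction that $G$ has two or more connected components $G_1, \ldots, G_r$ with $r \geq 2$. Then an $(H,L)$-coloring of $G$ is precisely the union of $(H|_{L(V(G_i))}, L|_{V(G_i)})$-colorings of the $G_i$, so since $G$ has no $(H,L)$-coloring, at least one component $G_i$ has no $(H,L)$-coloring either. But $G_i$ is a proper subgraph of $G$ (since $r\geq 2$), which contradicts $h$-minimality. Hence $G$ is connected. (Recall from the first observation of this section that every vertex satisfies $h(v)\geq 1$, so there are no trivial isolated-vertex components.)

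Combining these two steps: for any nonempty proper $S \subsetneq V(G)$, the connectedness of $G$ forces $|E_G(S,\overline S)| \geq 1$, and then Lemma~\ref{lem:j(k-2)} with $j=1$ yields $\rho_{G}(S) \geq \lambda - 1$, as required.

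There is no real obstacle; this observation is essentially a repackaging of the $j=1$ case of Lemma~\ref{lem:j(k-2)}, stated in a form that is convenient to cite in the discharging argument of Section~\ref{sec:discharging}. The only subtlety — already handled implicitly in the proof of Observation~\ref{obs:geq-1} — is pointing out why the hypothesis $|E_G(S,\overline S)|\geq 1$ of the lemma is automatic here.
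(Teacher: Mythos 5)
Your proof is correct and takes essentially the same route as the paper: the paper also proves this observation by noting $|E_G(S,\overline S)|\geq 1$ (since $G$ is connected) and invoking Lemma~\ref{lem:j(k-2)} with $j=1$. The only difference is that you spell out the connectedness argument, which the paper treats as implicit; that step is a routine consequence of $h$-minimality, as you show.
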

\begin{proof}
    As $S \subsetneq V(G)$ and $G$ is connected, $|E_G(S, \overline S)| \geq 1$, so the statement follows by setting $j=1$ in Lemma~\ref{lem:j(k-2)}.
\end{proof}

\begin{lemma}
\label{lem:k-1k}
    If $G$ has a nonempty proper subset $S\subsetneq V(G)$ for which $|E_G(S, \overline S)| = 1$, then $\rho_{G,h}(S) \in \{\lambda -1,\lambda \}$.
\end{lemma}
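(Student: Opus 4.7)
The lower bound $\rho_{G,h}(S) \geq \lambda - 1$ is immediate from Lemma~\ref{lem:j(k-2)} applied with $j = 1$. For the upper bound $\rho_{G,h}(S) \leq \lambda$, the plan is to argue by contradiction, assuming $\rho_h(S) \geq \lambda + 1$. Let $uv$ denote the unique edge between $S$ and $\overline{S}$, with $u \in S$ and $v \in \overline{S}$.

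The heart of the argument is to mimic the subset-extraction used in the proof of Lemma~\ref{lem:j(k-2)}, applied separately to $G[S]$ and $G[\overline S]$. Define $h_S'$ on $V(G[S])$ by $h_S'(u) = h(u) - 1$ and $h_S'(w) = h(w)$ otherwise; by Observation~\ref{obs:reducible}, $G[S]$ is not DP $h_S'$-colorable, so there is some $U' \subseteq S$ for which $G[U']$ contains a spanning $h_S'$-minimal subgraph $G^*$. Since $(G,h)$ and $(G[S],h_S')$ agree on $S \setminus \{u\}$, the $h$-minimality of $G$ forces $u \in U'$ (otherwise $G^*$ would be a proper subgraph of $G$ that is not DP $h$-colorable). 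Because $h_S'(u) = h(u) - 1 \leq k - 2$, neither case (1) nor case (2) of Theorem~\ref{thm:stronger} applies to $G^*$, so $\rho_{h_S'}(G^*) \leq -2$ by the minimality of our counterexample $G$. Passing from $G^*$ to $G[U']$ only adds edges, each contributing negatively to the potential, hence $\rho_{G,h_S'}(U') \leq -2$. A direct case check on the piecewise definition of $\rho_h(\cdot)$ shows $\rho_h(u) - \rho_{h_S'}(u) \leq \lambda + 2$ for every allowed value of $h(u)$, so
\[
\rho_{G,h}(U') \;\leq\; -2 + (\lambda + 2) \;=\; \lambda.
\]
The same construction applied to $G[\overline S]$ produces $U \subseteq \overline S$ with $v \in U$ and $\rho_{G,h}(U) \leq \lambda$.

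The endgame is a short case split. If $U' = S$, then $\rho_h(S) = \rho_h(U') \leq \lambda$, contradicting the assumption. Otherwise $U' \subsetneq S$, and I would form $T := U' \cup U$. Since $U \subseteq \overline S$ and $U' \subsetneq S$, the set $T$ is a nonempty proper subset of $V(G)$, and the sole edge of $G$ with one endpoint in $U'$ and the other in $U$ is $uv$; hence
\[
\rho_h(T) \;=\; \rho_h(U') + \rho_h(U) - 2\lambda \;\leq\; \lambda + \lambda - 2\lambda \;=\; 0.
\]
But $G$ must be connected (otherwise a component of $G$ would itself be a proper subgraph that is not DP $h$-colorable, violating $h$-minimality), so $|E_G(T, \overline T)| \geq 1$, and Lemma~\ref{lem:j(k-2)} with $j = 1$ yields $\rho_h(T) \geq \lambda - 1$. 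Since $\lambda \geq 6$ for $k \geq 5$, this contradicts $\rho_h(T) \leq 0$.

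The main obstacle will be the first step: correctly extracting $U'$ and verifying that $u \in U'$ via the $h$-minimality of $G$, together with ruling out the exceptional cases of Theorem~\ref{thm:stronger} on $G^*$ using $h_S'(u) < k-1$. This essentially requires recycling the inductive core from the proof of Lemma~\ref{lem:j(k-2)}, and carefully tracking the shift $\rho_h(u) - \rho_{h_S'}(u)$ across the three pieces of the definition of $\rho_h(\cdot)$ is what pins the upper bound at exactly $\lambda$.
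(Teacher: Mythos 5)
Your proof is correct and follows essentially the same route as the paper's own: extract an $h'$-minimal set $X \ni u$ inside $S$ with $\rho_{G,h}(X) \le \lambda$ (using the shift $\rho_h(u) - \rho_{h'}(u) \le \lambda+2$), symmetrically a set $Y \ni v$ inside $\overline S$, and combine. The paper phrases the endgame directly rather than by contradiction — from $\rho_h(X\cup Y)\le 0$ and Observation~\ref{obs:geqk-1} it concludes $X\cup Y = V(G)$, hence $X=S$ — but this is only a cosmetic difference.
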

\begin{proof}
     Suppose that $E_G(S,\overline S) = \{x_0y_0\}$, where $x_0 \in S$ and $y_0 \in \overline S$.
    If we define $h':V(S) \to \N \cup\{0\}$ so that $h'(x_0) = h(x_0) - 1$ and $h'(v) = h(v)$ for each $v \in S \setminus \{ x_0 \}$, Observation~\ref{obs:reducible} implies that
    $G[S]$ is not $h'$-colorable,
    and hence $S$ has a subset $X$ for which
    $G[X]$ has a spanning $h'$-minimal subgraph. As $G$ is $h$-minimal, $h$ and $h'$ do not agree on $X$, so $x_0 \in X$. As $h'(x_0) < k-1$,
    and as $G$ contains no $K_k$ subgraph,
    $\rho_{G,h'}(X) \leq -2$.
     As $\rho_h(x_0) - \rho_{h'}(x_0) \leq \lambda + 2$, we have
    $\rho_{G,h}(X) \leq \rho_{G,h'}(X) + (\lambda +2) \leq -2 + (\lambda +2) =  \lambda $. 
    By a symmetric argument, $\overline S$ has a vertex subset $Y$ containing $y_0$ for which $\rho_{G,h}(Y) \leq \lambda $.

    Now, $\rho_{G,h}(X \cup Y) \leq \lambda + \lambda  -2\lambda  =0$. Therefore, Observation~\ref{obs:geqk-1} implies that $V(G) = X \cup Y$, further implying $X = S$ and $\rho_{G,h}(S) \leq \lambda $. As Observation~\ref{obs:geqk-1} implies that $\rho_{G,h}(S) \geq \lambda -1$, the proof is complete.
\end{proof}

\begin{lemma}\label{lem:no1}
    $G$ has no vertex $v$ satisfying $h(v) = 1$.
\end{lemma}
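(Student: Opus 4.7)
The plan is to derive a contradiction by applying Lemma~\ref{lem:j(k-2)} directly to the singleton $\{v\}$. If $h(v) = 1$, then $\rho_h(v) = h(v)\lambda - 2 = \lambda - 2$, which sits exactly one unit below the lower bound $\lambda - 1$ that the lemma (with $j=1$) forces on any nonempty proper subset with at least one outgoing edge. So the whole strategy reduces to verifying that the hypotheses of Lemma~\ref{lem:j(k-2)} apply to $S = \{v\}$ with $j = 1$, which immediately produces the contradiction $\lambda - 2 = \rho_{G,h}(\{v\}) \geq \lambda - 1$.

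Concretely, I would suppose for contradiction that some $v \in V(G)$ has $h(v) = 1$. First I would rule out the degenerate case $V(G) = \{v\}$: with list size at least $1$, a single vertex can always be colored from its list, contradicting the assumption that $G$ is not DP $h$-colorable. Thus $\{v\}$ is a proper subset of $V(G)$. Second, I would verify that at least one edge leaves $\{v\}$. By Observation~\ref{obs:list-degree}, $d_G(v) \geq h(v) = 1$, so $v$ has at least one neighbor in $G$; this gives $|E_G(\{v\}, V(G) \setminus \{v\})| \geq 1$. (If one is worried that $G$ might be disconnected, note that an $h$-minimal multigraph with $h > 0$ everywhere must be connected, since otherwise some connected component would be a proper subgraph that is not DP $h$-colorable, contradicting $h$-minimality.)

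With both hypotheses verified, Lemma~\ref{lem:j(k-2)} (applied with $j=1$) yields $\rho_{G,h}(\{v\}) \geq \lambda - 1$. But $\rho_{G,h}(\{v\}) = \rho_h(v) = \lambda - 2$, a contradiction.

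There is no real obstacle here: the hard work was already done in Lemma~\ref{lem:j(k-2)}, which was designed precisely so that vertices with unusually small individual potential cannot sit inside proper subsets of $V(G)$. The only bookkeeping required is the trivial verification that $v$ has at least one neighbor and that $V(G)$ contains more than just $v$, both of which follow from the basic structural assumptions on the minimal counterexample.
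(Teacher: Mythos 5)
Your proof is correct and follows essentially the same route as the paper, which simply cites Observation~\ref{obs:geqk-1} (itself the $j=1$ case of Lemma~\ref{lem:j(k-2)}) to get $\rho_h(v) \geq \lambda - 1$ and compares against $\rho_h(v) = \lambda - 2$. Your extra bookkeeping verifying that $\{v\}$ is a proper subset with an outgoing edge is the content already folded into Observations~\ref{obs:list-degree} and~\ref{obs:geqk-1}, so there is no substantive difference.
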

\begin{proof}
        If $h(v) = 1$, then $\rho_h(v) = \lambda -2$, contradicting Observation~\ref{obs:geqk-1}.
\end{proof}

\begin{lemma}
\label{lem:low}
    If $h(v) \in [2,\ldots, \frac{\lambda -1}{2}]$, then $v$ is low.
\end{lemma}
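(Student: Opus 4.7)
The plan is to derive a contradiction by applying Lemma~\ref{lem:j(k-2)} to the singleton $S=\{v\}$. Suppose toward contradiction that some $v$ has $h(v)\in\{2,\ldots,\frac{\lambda-1}{2}\}$ but is not low; by Observation~\ref{obs:list-degree} this means $d(v)\geq h(v)+1\geq 3$. In particular $|V(G)|\geq 2$ (a one-vertex graph with $h(v)\geq 1$ is trivially DP $h$-colorable, so could not be our counterexample), so $S=\{v\}$ is a proper subset of $V(G)$ with $|E_G(S,\overline{S})|=d(v)$.

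Applying Lemma~\ref{lem:j(k-2)} with $j=d(v)$ yields $\rho_h(\{v\})\geq d(v)(\lambda-2)+1$. Since $h(v)\in\{2,\ldots,k-2\}$ (the upper bound $\frac{\lambda-1}{2}$ is at most $k-2$ for all $k\geq 5$, as a routine check on $\lambda=\lceil(k^2-7)/(2k-7)\rceil$ confirms), the definition of the potential gives $\rho_h(\{v\})=h(v)\lambda-1$. Substituting and using $d(v)\geq h(v)+1$, one rearranges
$$h(v)\lambda-1 \;\geq\; (h(v)+1)(\lambda-2)+1$$
to obtain $2h(v)\geq\lambda$, which directly contradicts the hypothesis $h(v)\leq\frac{\lambda-1}{2}$.

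There is no serious obstacle here: the whole argument is a one-line computation from Lemma~\ref{lem:j(k-2)} once one recognizes that a non-low vertex $v$ with small $h(v)$ is sending many edges out of $S=\{v\}$, which forces $\rho_h(\{v\})$ to be larger than the small value $h(v)\lambda-1$ allows. The only mildly delicate point is confirming that $h(v)\leq k-2$ so that the middle branch of the definition of $\rho_h(v)$ applies, but this is immediate from $\lambda\leq k-1$ combined with $h(v)\leq (\lambda-1)/2$.
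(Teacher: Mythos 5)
Your proof is correct in its main line and is essentially the same argument as the paper's: apply Lemma~\ref{lem:j(k-2)} to the singleton $S=\{v\}$, note $\rho_h(v)=h(v)\lambda-1$, and conclude that $d(v)\leq h(v)$ (equivalently, derive a contradiction from $d(v)\geq h(v)+1$). The computation $h(v)\lambda-1\geq(h(v)+1)(\lambda-2)+1\Rightarrow 2h(v)\geq\lambda$ is exactly the rearrangement the paper encodes via $\bigl\lceil\frac{h(v)\lambda-1}{\lambda-2}\bigr\rceil-1=h(v)$.

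One side justification is wrong, though. You claim that $h(v)\leq\frac{\lambda-1}{2}\leq k-2$ follows ``immediately from $\lambda\leq k-1$.'' But $\lambda\leq k-1$ is false for $k=5$ and $k=6$: in both cases $\lambda=\lceil\frac{k^2-7}{2k-7}\rceil=6$, whereas $k-1$ equals $4$ and $5$ respectively. What you actually need is the weaker bound $\lambda\leq 2k-3$, which is equivalent to $\frac{\lambda-1}{2}\leq k-2$ and does hold for all $k\geq 5$: indeed $\frac{k^2-7}{2k-7}\leq 2k-3$ reduces to $(3k-14)(k-2)\geq 0$, true for $k\geq 5$, and then taking ceilings preserves the inequality since $2k-3$ is an integer. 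So the conclusion of that sentence is right; the stated reason is not.
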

\begin{proof}
    We have $\rho_h(v) = h(v) \lambda  - 1 \leq \lceil \frac{h(v) \lambda  - 1}{\lambda -2} \rceil (\lambda -2) $. Then, Lemma~\ref{lem:j(k-2)} implies that $$d(v) \leq \left\lceil  
    \frac{h(v) \lambda  - 1}{\lambda -2} \right\rceil - 1 = h(v) + \left\lceil \frac{2h(v) - 1 }{\lambda -2} \right\rceil - 1 =h(v).$$
\end{proof}

\begin{lemma}
\label{lem:no2}
    $G$ has no vertex $v$ satisfying $h(v) = 2$.
\end{lemma}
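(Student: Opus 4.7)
Suppose for contradiction that $h(v) = 2$. By Lemma~\ref{lem:low}, $v$ is low, so $d(v) = 2$. The argument splits into two cases. In Case (i), $v$'s two edges form a double edge to a single vertex $u$; I apply Observation~\ref{obs:reducible} to $G - v$ with $h'(u) = h(u) - 2$ and $h'$ unchanged elsewhere, so $G - v$ is not DP $h'$-colorable. Let $\tilde G$ be any $h'$-minimal subgraph on a set $X$. Since $h'(u) \leq k - 3$, cases (1)-(2) of Theorem~\ref{thm:stronger} are ruled out for $\tilde G$ whenever $u \in X$; and if $u \notin X$ then $h = h'$ on $X$, so $\tilde G$ is a proper $h$-minimal subgraph of $G$, contradicting $h$-minimality. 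Hence $u \in X$ and $\rho_{h'}(\tilde G) \leq -2$. Using $\rho_h(u) - \rho_{h'}(u) \leq 2\lambda + 2$ together with $\rho_h(v) + \rho_h(vu) = (2\lambda - 1) + (-4\lambda - 1) = -2\lambda - 2$, I deduce $\rho_h(X \cup \{v\}) \leq -2$, contradicting either Observation~\ref{obs:geqk-1} (when $X \cup \{v\} \subsetneq V(G)$) or the assumption that $G$ is a counterexample (when $X \cup \{v\} = V(G)$).

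In Case (ii), $v$ has distinct neighbors $u_1 \neq u_2$; I apply Observation~\ref{obs:reducible} to $G - v$ with $h'(u_i) = h(u_i) - 1$ for both $i$. Arguing analogously, I obtain an $h'$-minimal $\tilde G$ on $X$ with $\rho_{h'}(\tilde G) \leq -2$ and $X \cap \{u_1, u_2\} \neq \emptyset$. If $\{u_1, u_2\} \subseteq X$, then $\rho_h(u_i) - \rho_{h'}(u_i) \leq \lambda + 2$ gives $\rho_h(X \cup \{v\}) \leq -2 + 2(\lambda + 2) + (2\lambda - 1) - 4\lambda = 1$. For $X \cup \{v\} \subsetneq V(G)$ this contradicts Observation~\ref{obs:geqk-1} (since $\lambda - 1 \geq 5 > 1$); and for $X \cup \{v\} = V(G)$, tightness forces $h(u_i) = k - 1$ and $G[X] = \tilde G$ exactly, so that the proper-subgraph $h$-coloring of $\tilde G$ combined with a matching-based argument at $v$ (choosing $v$'s color to avoid the pair of forbidden combinations) yields an $h$-coloring of $G$, contradicting that $G$ is uncolorable.

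If only one of $u_1, u_2$ (say $u_1$) lies in $X$, then $X \cup \{v\} \subsetneq V(G)$ (since $u_2 \notin X \cup \{v\}$) and $\rho_h(X \cup \{v\}) \leq \lambda - 1$. Observation~\ref{obs:geqk-1} forces equality, and then Lemmas~\ref{lem:j(k-2)} and~\ref{lem:k-1k} imply $|E_G(X \cup \{v\}, \overline{X \cup \{v\}})| = 1$, with the sole outgoing edge being $vu_2$. Hence $V(G) = X \sqcup \{v\} \sqcup W$ with $W = V(G) \setminus (X \cup \{v\})$, and the edges $vu_1, vu_2$ are the only inter-connections between $X$ and $W$.

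The main obstacle is this last configuration, where the potential bounds are marginal. I plan to apply the symmetric reduction on the $W$-side to obtain analogous tightness ($\rho_h(W) = \lambda$ and $h(u_2) = k - 1$), giving $\rho_h(G) = -1$. I then argue that the combined rigidity---namely, the requirement that every $h$-coloring of the proper subgraph $X$ (respectively $W$) must assign $u_1$ (respectively $u_2$) a unique achievable color in order for $G$ to remain uncolorable---combined with the one-to-one nature of the matchings $L(v) \leftrightarrow L(u_i)$ in the cover makes the ``both colors of $L(v)$ simultaneously forbidden'' condition impossible to sustain; a short case analysis of the two matchings at $v$ then produces a compatible coloring of $X \cup \{v\} \cup W$, contradicting the assumption that $G$ is not DP $h$-colorable.
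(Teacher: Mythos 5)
Your Case (i) (parallel edges) is correct and is arguably cleaner than the paper's: instead of fixing a color $c\in L(v)$ and constructing restricted lists, you go straight through Observation~\ref{obs:reducible} with $h'(u)=h(u)-2$, correctly compute $\rho_h(u)-\rho_{h'}(u)\leq 2\lambda+2$, and land at $\rho_h(X\cup\{v\})\leq -2$, contradicting Observation~\ref{obs:geq-1}. That is sound.

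Case (ii) is where there is a genuine gap, and it stems from taking a weaker reduction than the paper does. The paper does \emph{not} reduce to $G-v$ with $h'(u_i)=h(u_i)-1$. Instead, it forms $G'=(G-v)+u_1u_2$ and equips $G'$ with an $h$-cover in which the new edge $u_1u_2$ carries a specifically chosen matching ($\{1_{u_1}2_{u_2},\,2_{u_1}1_{u_2}\}$). This matching is designed so that any $(H',L)$-coloring of $G'$ automatically leaves a free color for $v$, hence $G'$ contains an $h$-minimal subgraph $G''$ through $u_1,u_2$; the potential gain from the added edge $u_1u_2$ (an extra $-2\lambda$) is exactly what pushes $\rho_{G,h}(V(G'')\cup\{v\})$ down to $-2$ (or yields the rigid $G''=K_k$ structure that the paper then dismantles with Lemma~\ref{lem:j(k-2)} and the GDP-tree characterization). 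Your reduction gives only $\rho_h(X\cup\{v\})\leq 1$ when $u_1,u_2\in X$, and $\leq \lambda-1$ when only $u_1\in X$; neither of these is itself a contradiction, and you do not have the structural leverage (the known $K_k$, the known cut configuration) that the paper's added-edge trick supplies.

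Concretely, two steps do not go through as written. First, in the subcase $\{u_1,u_2\}\subseteq X$ with $X\cup\{v\}=V(G)$: you assert that ``tightness forces $h(u_i)=k-1$ and $G[X]=\tilde G$ exactly,'' but $\rho_h(G)$ only needs to be $\geq -1$, not $=1$; the chain of inequalities need not be tight, so you cannot conclude $h(u_i)=k-1$ for both $i$, let alone that $(G-v)[X]$ coincides with $\tilde G$. The ``matching-based argument at $v$'' is then invoked without being given. Second, in the subcase where only $u_1\in X$, the entire closing argument is stated as a plan (``I plan to apply the symmetric reduction\dots I then argue\dots a short case analysis\dots then produces a compatible coloring''). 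Even granting the decomposition $V(G)=X\sqcup\{v\}\sqcup W$ with the two bridges $vu_1,vu_2$, the parity check gives $\rho_h(W)\in\{\lambda-1,\lambda\}$ by Lemma~\ref{lem:k-1k}, and only $\rho_h(W)=\lambda-1$ yields the needed $\rho_h(G)\leq -2$; when $\rho_h(W)=\lambda$ you are left with $\rho_h(G)=-1$, and the rigidity-of-the-two-matchings argument you sketch (that all colorings of $G[X]$ fix $u_1$'s color and all colorings of $G[W]$ fix $u_2$'s color) is not a contradiction by itself without a further structural lemma. This is precisely the case the paper's added-edge construction is built to avoid.

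In short: Case (i) is fine and takes a simpler route than the paper; Case (ii) takes a genuinely different route from the paper, but that route does not close, and the key idea that makes the paper's Case 2 work---adding the edge $u_1u_2$ with a tailored matching to strengthen the potential bound to $-2$ and to force the $h$-minimal subgraph to contain both $u_1$ and $u_2$ with $h$ unchanged---is missing from your argument.
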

\begin{proof}
    Suppose that $v \in V(G)$ satisfies $h(v) = 2$.
    By Lemma~\ref{lem:low}, $d(v) = 2$.
    Suppose first that $v$ is joined to a neighbor $w$ by a pair of parallel edges. 
    Then, $N(v) = \{w\}$.

    Now, 
    let $G' = G-v$. Choose a color $c \in L(v)$, and obtain $H'$ from $H$ by deleting $L(v) \cup N_H(c)$. We may assume without loss of generality that $|L(w) \cap N_H(c)| \geq 2$.
    Let $L'(w) = L(w) \cap V(H')$, and let $L'(x) = L(x)$ for each $x \in V(G') - w$. If $G'$ has an $(H',L')$-coloring $\varphi'$, then we can extend it to $G$ by letting $\varphi'(v)=c$.
    So assume $G'$ has no $(H',L')$-coloring. 
    Then there exists a subset $U' \subseteq V(G')$ for which $G'[U']$ has a spanning $h'$-minimal subgraph. As $G$ is $h$-minimal and $U' \subsetneq V(G)$, it follows that $h'$ and $h$ do not agree on some vertex of $U'$; therefore, $w \in U'$.  As $h'(w) \leq k-3$, it follows from the minimality of $G$ that $\rho_{G',h'}(U') \leq -2$, so that $\rho_{G,h}(U') \leq -2 + \lambda  + (\lambda +2) = 2\lambda $. Then,
    \[\rho_{G,h}(U' + v) \leq 2\lambda  - (4\lambda +1) + (2\lambda -1) = -2,\]
    contradicting Observation~\ref{obs:geq-1}.

Otherwise, suppose $N(v)=\{w,u\}$, where $u\neq w$.
    By Lemma~\ref{lem:no1}, $|L(u)| \geq 2$ and $|L(w)| \geq 2$. 
We may assume that $\{1_v 1_u, 2_v 2_u, 1_v 1_w, 2_v 2_w \} \subseteq E(H)$.

Obtain $G'$ from $G-v$ by adding an edge $uw$.
Also, let $(H',L)$ be an $h$-cover of $G'$ obtained by deleting $L(v)$ and then adding 
 the matching $\{1_u2_w,2_u1_w\}$. 
 (Here, we consider $h$ restricted to $V(G')$.)
If $G'$ has an $(H',L)$-coloring $\varphi$, then we can extend $\varphi$ to $v$, because if $\varphi(u)$ forbids the color $i_v$ for $v$, then (due to our new edge $uw$) $\varphi(w)$ does not forbid the color $(3-i)_v$ for $v$. 
 Thus, $G'$ has an $h$-minimal subgraph $G''$
 containing $u$ and $w$. Let $U=V(G'')$.

 As $|V(G')| < |V(G)|$, it follows from our induction hypothesis that $\rho_{G',h}(U) \leq -2$ or  $G''$ is exceptional.
 We consider several cases.

 First, suppose that 
 $k=5$, $h(x) = 4$ for each $x \in U$, and that $G''$ is a copy of $4 K_2$ or $2 C_t$ for some integer $t$.
 Since each adjacent pair of vertices in $G''$ has edge multiplicity at least $2$,
 and as $u,w \in U$,
 we have $uw \in E(G)$.
 Furthermore, as $G''$ is exceptional, 
 $\rho_{G'',h}(U) \leq 0$.
 Therefore,
 \[\rho_{G,h}(U \cup \{v\}) \leq 0 + (2 \lambda + 1) - 2(2\lambda) + (2\lambda - 1) = 0.\]
 Thus, Observation \ref{obs:geqk-1} implies that $V(G) = U \cup \{v\}$.
Furthermore, Observation \ref{obs:geq-1} 
implies that $h(x) = 4$ for each $x \in U$.
Then, $G$ is DP $h$-colorable by Theorem \ref{thm:deg-choosable}, a contradiction.
 %(By the way, why didn't we use $U$?)

Next, suppose that $\rho_{G',h}(U) \leq -2$. %(continued below)
 
 \iffalse
% a copy of $K_k$, {\JX or $k = 5$, $h(x) = 4$ for every $x\in U'$, and $G''$ is a copy of $K_2^4$ or $C_t^2$ for some integer $t$. 
%  If $u$ and $w$ were not adjacent in $G$, then removing $v$ and adding edge $uw$ would add $-(2\lambda - 1) + 2\lambda = 1$
%  to the potential of any set containing $u, w$ in $G'$. And if $u$ and $w$ were adjacent in $G$, then this operation would add $-(2\lambda - 1) + 4\lambda - (2\lambda + 1) = 0$ to the potential of any set containing $u, w$ in $G'$.
%  Therefore, in the case of $k = 5$ and $G''$ is an exceptional graph, $U = V(G)$. Otherwise this is a contradiction to Observation~\ref{obs:geqk-1}. But then by Lemma~\ref{lem:k24twist} and~\ref{lem:C2ttwist}, $G$ is $h$-colorable, a contradiction.
%  }
 \fi
 Then $G[U\cup\{v\}]$ is constructed by deleting an edge $uw$ from $G'[U]$, adding a vertex $v$ with $h(v) = 2$, and adding two edges $vu$ and $vw$; hence,
 \[\rho_{G,h}(U \cup  \{v\}) \leq 
 -2 + (2\lambda -1) - 2(2\lambda ) + (2\lambda +1) = -2,\]
 contradicting Observation~\ref{obs:geq-1}. 
 
 Therefore, $G''$ is  a copy of $K_k$, and for some $\varepsilon\in \{0,1\}$, $h(x) = k-1$ for
  $k-1-\varepsilon$ vertices $x\in U$, and $h(y) = k-2$ for $\varepsilon$ vertices $y \in U$.
 Then, writing $U' = U \cup \{v\}$,
 \[\rho_{G,h}(U') = 
 k + (2\lambda - 1) - 2(2\lambda) + 2 \lambda-\varepsilon(\lambda+2) = k-1-\varepsilon(\lambda+2).
 \]

 For $k \geq 5$, 
 \[2 \lambda - 3 = 2\left\lceil\frac{k^2-7}{2k-7}\right\rceil - 3 \geq  \frac{2k^2 - 14}{2k-7}  - 3 = k - 1 + \frac{3k}{2k-7} > k-1, \]
 so by Lemma~\ref{lem:j(k-2)}, 
 $j:=|E_G(U', \overline{U'})| \leq 1$ and if $\varepsilon=1$, then $j=0$.
 
If $j = 0$, then $U'=V(G)$, every vertex of $G$ is low, and $G$ is not a GDP-tree. Therefore, $G$ is DP $h$-colorable, a contradiction. 
If $j = 1$, then $\varepsilon=0$.
Write $x$ for the unique vertex of $U'$ with a neighbor outside of 
$U'$.
As $d(v) = 2$, $x \neq v$. 
If $x \not \in \{u,w\}$, then each vertex $z \in U' \setminus \{x\}$ is low, and $G'[U' \setminus \{x\}]$ is $2$-connected and not a GDP-tree. 
Hence, $G'[U' \setminus \{x\}]$ is DP $(h-d_G+d_{G[U' \setminus \{x\}]})$-colorable, contradicting Observation~\ref{obs:reducible}.

Otherwise, $x \in \{u,w\}$. 
We claim that $G[U']$ is DP $(h-d_G+d_{G[U']})$-colorable. For each $z \in U'$, we write $h'(z) = h(z) - d_G(z) + d_{G[U']}(z)$, and observe that $h'(x) = k-2 \geq 3$, and $h'(z)= h(z)$ for each $z \in U' \setminus \{x\}$.
Consider an $h'$-cover $(H',L')$ of $G[U']$.
We assign a color $c \in L'(x)$ to $x$ with no neighbor in $L(v)$; this is possible, as $|L(v)| = h'(v) = 2 < |L'(x)|$. Afterward, each vertex $z \in U' \setminus \{x\}$ has at least $d_{G[U' \setminus \{x\}]}(z)$ colors in $L'(z)$ not adjacent to $c$. 
Furthermore, $v$ has two colors in $L(v)$ not adjacent to $c$, while $v$ has only one neighbor in $U \setminus \{x\}$. 
{ We give a linear ordering to the vertices of $U' \setminus \{x\}$
that ends with $v$, and  observe that for each $z \in U' \setminus \{x\}$, the number of neighbors of $z$ appearing before $z$ in the ordering is less than $|L'(z) \setminus \{c\}|$.
Therefore,  by greedily coloring the vertices of $U' \setminus \{x\}$  according to our linear ordering, we complete an $(H',L')$-coloring of $G[U']$, contradicting Observation \ref{obs:reducible}.
}
Hence  we can finish an $(H',L')$-coloring of $G[U']$ using degeneracy,
contradicting Observation~\ref{obs:reducible}.
\end{proof}

\begin{cor}\label{cor:h-geq3}
    Every vertex $v$ in $G$ satisfies $h(v)\geq 3$.
\end{cor}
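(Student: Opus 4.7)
The corollary is an immediate consolidation of results already established in this section, so the plan is essentially bookkeeping rather than new mathematics. The first observation in this subsection rules out $h(v)=0$: any such $v$ would give $\rho_h(v)=-2$, and either $G=\{v\}$ fails to be a counterexample or $G[\{v\}]$ is a forbidden proper subgraph violating $h$-minimality. Lemma~\ref{lem:no1} eliminates $h(v)=1$ directly, since in that case $\rho_h(v)=\lambda-2$ would contradict the lower bound $\rho_{G,h}(\{v\})\geq \lambda-1$ from Observation~\ref{obs:geqk-1}. Lemma~\ref{lem:no2} eliminates $h(v)=2$ via the more delicate argument involving collapsing two parallel edges or replacing $v$ with a new edge $uw$ and appealing to $h$-minimality on the resulting smaller multigraph.

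Hence the plan for the corollary is simply: assume for contradiction that some $v\in V(G)$ has $h(v)\leq 2$; split into the three cases $h(v)\in\{0,1,2\}$; and cite the observation at the start of the subsection, Lemma~\ref{lem:no1}, and Lemma~\ref{lem:no2}, respectively, to derive a contradiction in each case.

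There is no genuine obstacle here; the only care needed is to make sure the three prior results together cover every value in $\{0,1,2\}$, which they do. I would therefore write a one-line proof of the form below.

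\begin{proof}
    By the first observation of this subsection, $h(v)\neq 0$. By Lemma~\ref{lem:no1}, $h(v)\neq 1$. By Lemma~\ref{lem:no2}, $h(v)\neq 2$. Since $h(v)\in\{0,\ldots,k-1\}$, we conclude $h(v)\geq 3$ for every $v\in V(G)$.
\end{proof}
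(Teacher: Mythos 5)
Your proof is correct and takes essentially the same route as the paper, which simply cites Lemmas~\ref{lem:no1} and~\ref{lem:no2}; you additionally spell out the $h(v)=0$ case via the first observation of the subsection, which the paper leaves implicit.
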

\begin{proof}
    This follows from Observation~\ref{obs:zero} and Lemmas~\ref{lem:no1} and~\ref{lem:no2}.
\end{proof}

\begin{lemma}
\label{lem:k-1-reg}
     Let $k\geq 5$.
    If $G$ is $2$-connected and $h(v) = k-1$ for each $v \in V(G)$, then $G$ is not $(k-1)$-regular.
\end{lemma}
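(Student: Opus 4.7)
The plan is to argue by contradiction: assume $G$ is $2$-connected, $(k-1)$-regular, and has $h(v)=k-1$ for every vertex. The first observation is that every vertex is then low ($d(v)=k-1=h(v)$), so Lemma~\ref{lem:ERT} applied with $U=V(G)$ forces every block of $G$ to be isomorphic to $K_t^s$ or $C_t^s$. Because $G$ is $2$-connected, it consists of a single such block: either $G=K_t^s$ with $s(t-1)=k-1$, or $G=C_t^s$ with $t\geq 3$ and $2s=k-1$.

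Next I would carry out a short potential computation. Substituting $h\equiv k-1$ and using the regularity identities $s(t-1)=k-1$ and $2s=k-1$, the terms involving $\lambda$ cancel, producing the clean formulas
\[ \rho_h(K_t^s)=\frac{t(t-k+2)}{2}, \qquad \rho_h(C_t^s)=t(2-s). \]
It remains to check, by a short case analysis on $(t,s,k)$, that each admissible $G$ matches one of the conclusions of Theorem~\ref{thm:stronger}, contradicting that $G$ is a counterexample. Specifically: $s=1$ in the $K_t^s$ case forces $t=k$, so $G=K_k$ (conclusion 1); for $k=5$ the only remaining solutions are $(t,s)=(2,4)$ and $(t,s)=(3,2)$ (noting $K_3^2=C_3^2$) among the $K_t^s$, together with $s=2$ among the $C_t^s$, all of which are $K_2^4$ or double cycles (conclusion 2); in every other case the formulas give $\rho_h(G)\leq -2$ directly (the tightest case being $(t,s,k)=(2,5,6)$ in $K_t^s$, where $\rho_h=-2$; and $s\geq 3$ in $C_t^s$ forces $\rho_h\leq -3$).

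No step presents a real obstacle: the core content is the application of Lemma~\ref{lem:ERT} to restrict $G$ to a single GDP-block, after which the proof reduces to a clean potential computation whose purpose is to verify that the only $2$-connected $(k-1)$-regular GDP-blocks with $h\equiv k-1$ are exactly the three families of exceptions already set aside in Theorem~\ref{thm:stronger}. The only mildly delicate piece is enumerating the small cases when $k=5$, which is precisely where the exceptional multigraphs $K_2^4$ and double cycles appear.
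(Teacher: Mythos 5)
Your proposal is correct and follows essentially the same route as the paper: the paper invokes Theorem~\ref{thm:deg-choosable} directly (rather than routing through Lemma~\ref{lem:ERT}, which is equivalent here) to conclude that $G$ is a single block $K_t^s$ or $C_t^s$, and then carries out the same potential computations, arriving at the same formulas $\rho_h(K_t^s)=\tfrac12 t(t-k+2)$ and $\rho_h(C_t^s)=t\bigl(\tfrac52-\tfrac k2\bigr)$ and the same case split separating $k=5$ (where the exceptional multigraphs appear) from $k\geq 6$.
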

\begin{proof}
 Suppose $G$ is $(k-1)$-regular.
    As $G$ is $h$-minimal, Theorem~\ref{thm:deg-choosable} implies that $G$ is either ${\frac{k-1}{t-1}} K_t$ for some $2 \leq t \leq  k-2$ (satisfying $(t-1) | (k-1)$) or ${(k-1)/2} C_n$ for some $n \geq 3$ (provided $k $ is odd). If $G$ is ${\frac{k-1}{t-1}} K_t$, then 
    \[\rho_h(G) = t((k-1)\lambda +1) - \binom t2 \left ( \frac{k-1}{t-1} \right ) (2\lambda +1) + \binom t2 = \binom{t+1}{2} - \frac 12 t(k-1) = \frac 12 t (t-k+2)  .\]
    As a function of $t$, this expression is a convex quadratic function and
    is maximized at $t=2$.
    When $t = 2$ and $k\geq 6$, we have $\rho_h(G) = 2 -  k+2 \leq -2$. 
    Similarly, if $k\geq 6$ and $G={(k-1)/2} C_n$ (so $k$ is odd), then 
\begin{eqnarray*}
\rho_h(G) &=& n((k-1)\lambda +1)  - n \left ( \frac {k-1}{2} \right )(2\lambda +1) + n 
= n \left (   \frac 52 - \frac 12 k \right )
\leq  -3
\end{eqnarray*}
for $k \geq 7$ and $n \geq 3$. 

In both cases, $\rho_h(G) \leq -2$, so $G$ is not a counterexample to Theorem~\ref{thm:stronger}, a contradiction. 

So suppose $k=5$.
If $G=2 C_n$, then Part 2 of Theorem~\ref{thm:stronger} holds.
If $G={\frac{5-1}{t-1}} K_t$ for some $2 \leq t \leq  5-2$, then $t\in \{2,3\}$. 
If $t=2$, then $G$ is isomorphic to $4 K_2$. If $t=3$, then $G$ is isomorphic to $2 C_3$.
In all cases, Part 2 of Theorem~\ref{thm:stronger} holds. 
\end{proof}

\subsection{A special set $S_0^*$ of vertices in $G$}

An {\em edge-block} in a multigraph $G$ is an inclusion maximal connected subgraph $G'$ of $G$ such that
either $|V(G')|=2$ or $G'$ has no cut edges. In particular, every cut edge forms an edge-block, and each connected multigraph decomposes into edge-blocks.

Define a special subset $S^*_0 \subseteq V(G)$ as follows. If $G$ has no cut edges, then $S^*_0=V(G)$. Otherwise, we fix a smallest pendent edge-block $B^*$ distinct from $K_2$
  and let $S^*_0=V(B^*)$. By Lemma~\ref{lem:no1} and Observation~\ref{obs:list-degree}, a cut edge cannot be a pendent edge-block, so $S^*_0$ is  well-defined.

    If $B^* \subsetneq G$, then
    since $B^*$ is pendent, there are $x^*_0\in S^*_0$ and $y^*_0\in V(G)-S^*_0$ such that $x^*_0y^*_0$ is the unique edge connecting $B^*$ with the rest of $G$. Fix these $B^*$, $S^*_0$, $x^*_0,y^*_0$.
  By definition, $B^*$ is $2$-edge-connected.

We have the following lemma.

\begin{lemma}
\label{lem:terminal}
Suppose that $S^*_0 \subsetneq V(G)$.
For each  $S \subseteq V(G)$ satisfying $\emptyset \subsetneq S \cap S_0^* \subsetneq S_0^*$, $\rho_{G,h}(S) \geq 2\lambda -3$.
\end{lemma}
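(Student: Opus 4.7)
The strategy is to reduce the claim to an application of Lemma~\ref{lem:j(k-2)} with $j = 2$. Specifically, observing that $2(\lambda-2) + 1 = 2\lambda - 3$, it suffices to show that $|E_G(S,\overline{S})| \geq 2$ for every $S$ with $\emptyset \subsetneq S \cap S_0^* \subsetneq S_0^*$.

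To establish this, I will exploit the fact that $B^*$ was chosen as a pendent edge-block distinct from $K_2$. Since an edge-block with more than two vertices has no cut edges by definition, $B^*$ is $2$-edge-connected. Therefore, for any proper nonempty subset $T$ of $V(B^*) = S_0^*$, we have $|E_{B^*}(T, S_0^* \setminus T)| \geq 2$.

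Given $S$ with $\emptyset \subsetneq S \cap S_0^* \subsetneq S_0^*$, let $T := S \cap S_0^*$. Then $T$ is a nonempty proper subset of $S_0^*$, and $S_0^* \setminus T = S_0^* \setminus S \subseteq \overline S$, while $T \subseteq S$. Hence every edge of $E_{B^*}(T, S_0^* \setminus T)$ is also an edge of $E_G(S, \overline S)$, giving $|E_G(S, \overline S)| \geq 2$. An application of Lemma~\ref{lem:j(k-2)} with $j = 2$ then yields $\rho_{G,h}(S) \geq 2(\lambda - 2) + 1 = 2\lambda - 3$, as required.

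There is no substantive obstacle here: the lemma is essentially a direct consequence of the $2$-edge-connectivity of $B^*$ (guaranteed by the way $S_0^*$ was selected) combined with the earlier potential bound Lemma~\ref{lem:j(k-2)}. The only small point to keep in mind is to verify that $B^* \neq K_2$ forces $B^*$ to have no cut edges, which is immediate from the definition of edge-block.
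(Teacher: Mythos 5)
Your proof is correct, and it takes a genuinely shorter and more uniform route than the paper's. Both arguments hinge on the $2$-edge-connectedness of $B^*$ together with Lemma~\ref{lem:j(k-2)}, but the paper splits into two cases according to whether $S \subseteq S_0^*$. When $S \subsetneq S_0^*$, the paper's argument is essentially a contrapositive restatement of yours. When $S \not\subseteq S_0^*$, however, the paper introduces $G' = G - S_0^*$, invokes Lemma~\ref{lem:k-1k} to bound $\rho_{G,h}(V(G'))$, and carries out a submodularity-style decomposition
$\rho_{G,h}(S) = -2\lambda + \rho_{G,h}(S \cap S_0^*) + \rho_{G,h}(S \cap V(G'))$
before combining the pieces. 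Your observation --- that with $T := S \cap S_0^*$ the edges of $E_{B^*}(T, S_0^* \setminus T)$ already lie entirely inside the cut $(S, \overline S)$, no matter how $S$ behaves outside $S_0^*$ --- lets you apply Lemma~\ref{lem:j(k-2)} with $j=2$ directly in all cases and avoids the second case of the paper's proof altogether. The only imprecision is your remark that ``an edge-block with more than two vertices has no cut edges''; $B^*$ could also consist of two vertices joined by at least two parallel edges (since $B^* \neq K_2$), but that configuration is also $2$-edge-connected, so the conclusion you need (every nontrivial edge cut of $B^*$ has size at least $2$) holds in either case, and the paper itself asserts this fact without further argument.
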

\begin{proof}
If $G[S]$ is not connected, 
then each vertex set $S'$ inducing a component of $G[S]$ is a proper subset of $V(G)$, and hence $\rho_{G,h}(S') \geq \lambda-1$ 
by Lemma \ref{lem:j(k-2)}.
As $G[S]$ has at least two components, it follows that $\rho_{G,h}(S) \geq 2 \lambda - 2$. Thus, we assume that $G[S]$ is connected.

Recall that $x_0^*y_0^*$ is the cut edge joining 
$S_0^*$ and $\overline{S_0^*}$.
    First, suppose that $S \subsetneq S_0^*$.
    If $\rho_{G,h}(S) \leq 2\lambda -4$, then by Lemma~\ref{lem:j(k-2)}, $|E_G(S, \overline S)| \leq 1$. 
    Then, $S$ and $\overline S \cap S_0^*$ are joined by at most one edge, contradicting the $2$-edge-connectedness of $B^*$.

Next, suppose that $S$ is not contained in $S_0^*$, so that $x_0^*, y_0^* \in S$. 
We write $G' = G-S_0^*$ and observe that $\rho_{G,h}(V(G')) \leq \lambda $ by Lemma~\ref{lem:k-1k}.
We make several claims.

First, we claim that  $\rho_{G,h}(S \cap S_0^*)  + \rho_{G,h}(V(G')) \geq 4\lambda -3 $.
Indeed, as $S \cap S_0^*$ is joined to  $S_0^*-S$
by at least two edges, Lemma~\ref{lem:j(k-2)} implies that 
    \[2\lambda -3 \leq \rho_{G,h}(S \cup ( V(G')) \leq -2\lambda  + \rho_{G,h}(S \cap S_0^*) + \rho_{G,h}(V(G')).\]
    Rearranging, $\rho_{G,h}(S \cap S_0^*) + \rho_{G,h}(V(G')) \geq 4\lambda -3$.

    Next, we claim that $\rho_{G,h}(S \cap V(G')) \geq \rho_{G,h}(V(G'))$.
    Indeed, if $V(G') \subseteq S$, then the statement is clearly true. If $S \cap V(G') \subsetneq V(G')$, then as $G'$ is connected, $E_G(S \cap V(G'), \overline {S\cap V(G')})$ contains $x_0^* y_0^*$ as well as at least one additional edge of $G'$. Therefore, by Lemma~\ref{lem:j(k-2)}, $\rho_{G,h}(S \cap V(G')) \geq 2\lambda -3 > \rho_{G,h}(V(G'))$.

    Now, putting these claims together,
    \[\rho_{G,h}(S) = -2\lambda  + \rho_{G,h}( S \cap S_0^*) + \rho_{G,h} (S \cap V(G') )
    \geq -2\lambda  + \rho_{G,h}(S \cap S_0^*) + \rho_{G,h}(V(G')) \geq 2\lambda -3.\]
\end{proof}

\subsection{Subgraphs induced by low vertices in $G$}
Let $\BB$ be the subgraph of $G$ induced by low vertices in $G$ and let ${\BB}_0$ be the subgraph of $\BB$ induced by the vertices of $\BB$ in $S^*_0$.
By Lemma~\ref{lem:ERT}, each block in $\BB$ is a multiple of either a complete graph or a cycle.
In particular, each such block is regular.
Given a $(k-2)$-regular block $B \subseteq \BB$  along with distinct vertices $u,u' \in N(B) \setminus V(B)$, we write $F(B,u,u')$ for the multigraph obtained from $G - V(B)$ by adding an edge $uu'$.

\begin{lemma}\label{comp}
If $B$ is a $(k-2)$-regular block  of $\BB$, then the multigraph $F(B,u,u')$ is not DP $h$-colorable.
\end{lemma}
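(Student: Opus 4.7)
My plan is proof by contradiction. Assume $F := F(B,u,u')$ is DP $h$-colorable, and fix an $h$-cover $(H,L)$ of $G$ admitting no $(H,L)$-coloring. Since $G$ is $h$-minimal, $G - V(B)$ has $(H,L)$-colorings; since $G$ has none, every such coloring $\phi$ fails to extend to $V(B)$. Define
\[
M := \{(\phi(u), \phi(u')) : \phi \text{ is an } (H,L)\text{-coloring of } G - V(B)\} \subseteq L(u) \times L(u').
\]
Build $H^*$ from $H[L(V(F))]$ by adding $M$ as the matching corresponding to the new edge $uu'$ in $F$. Provided $M$ is a matching, $(H^*, L|_{V(F)})$ is a valid $h$-cover of $F$ with no coloring, because any $(H^*, L|_{V(F)})$-coloring $\psi$ of $F$ would restrict to an $(H,L)$-coloring of $G - V(B)$ with $(\psi(u), \psi(u')) \in M$, contradicting $\psi$'s compatibility with the new matching on $uu'$. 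This would contradict $F$'s assumed DP $h$-colorability, completing the proof.

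The main task is therefore to show that $M$ is a matching. Every $v \in V(B)$ is low with $d_B(v) = k-2$, so $d_G(v) \in \{k-2, k-1\}$ and $v$ has at most one neighbor $x_v$ outside $B$. By Lemma~\ref{lem:ERT}, $B \cong K_t^s$ or $C_t^s$. For $\phi$ to fail extending, the residual cover on $B$ with lists $L_\phi(v) = L(v) \setminus N_H(\phi(x_v))$ (and $L_\phi(v) = L(v)$ when $x_v$ is undefined) must be uncolorable; Lemma~\ref{lem:clique-cover} then forces $|L_\phi(v)| = k-2$ for every $v$ and a rigid $s$-blowup structure on the residual cover, with the ``killed'' color in each $L(v)$ (the unique neighbor of $\phi(x_v)$ in $L(v)$) lying outside the blowup classes. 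By rigidity, fixing the killed colors in all $L(v)$ with $x_v \neq u'$ determines the blowup-class decomposition of each such $L(v)$, which in turn forces the decomposition of $L(v_{u'})$ (where $v_{u'} \in V(B)$ is a neighbor of $u'$) and hence the killed color in $L(v_{u'})$; since the cover matching between $L(u')$ and $L(v_{u'})$ is a matching, this pins down $\phi(u')$ from the rest of $\phi$. Tracing the rigid structure starting from $c_u = \phi(u)$ (which fixes the killed color in $L(v_u)$) and propagating through the blowup forces at most one possible $c_{u'} = \phi(u')$ compatible with a bad extension, so each $c_u \in L(u)$ appears in at most one pair of $M$; a symmetric argument handles $c_{u'}$.

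The main obstacle is making this rigidity argument rigorous across the cases of Lemma~\ref{lem:clique-cover} --- clique blocks $K_t^s$, even-cycle blocks $C_{2t}^s$, and odd-cycle blocks $C_{2t+1}^s$ --- and in the subcase where several vertices of $B$ share $u$ or $u'$ as their outside neighbor. A further subtlety is that two colorings $\phi_1, \phi_2$ realizing distinct bad pairs $(c_u, c_1), (c_u, c_2)$ need not agree on $V(F) - \{u, u'\}$, so the rigidity argument must propagate through the coupling of killed colors across all of $B$. The $(k-2)$-regularity of $H|_B$ combined with the blowup rigidity from Lemma~\ref{lem:clique-cover} should suffice to force the desired one-to-one correspondence and establish that $M$ is a matching.
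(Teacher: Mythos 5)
Your approach is genuinely different from the paper's and is sound in outline. The paper splits into two cases according to the structure of $H[L(V(B))]$ (whether $B$ is a simple $K_{k-1}$ with all $h$-values $k-1$, or not) and constructs a bespoke $h$-cover of $F(B,u,u')$ in each case: a perfect matching between $L(u)$ and $L(u')$ based on "common components" in the first case, and the single edge $\{f(u),f(u')\}$ in the second. It then shows that any coloring of $F$ under that cover would extend (via Lemma~\ref{lem:clique-cover}) to a coloring of $G$. You instead build one universal cover by adding the observed matching $M=\{(\phi(u),\phi(u'))\}$, and your verification that this cover admits no coloring is pleasingly short---it avoids the "extend $f'$ to $G$" step entirely and follows directly from the definition of $M$. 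The cost is that you must prove $M$ is a matching, a structural fact the paper never needs to make explicit because its case split pre-packages it.

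The claim that $M$ is a matching is correct, but the justification you sketch misidentifies where the determinism comes from. You write of ``fixing the killed colors in all $L(v)$ with $x_v\neq u'$'' and then propagating, but those killed colors are not determined by $\phi(u)$---they depend on $\phi(x_v)$ for outside neighbors $x_v$ far from $u$. The actual mechanism is more local: a failing $\phi$ forces the residual cover to be $(k-2)$-regular (first paragraph of the proof of Lemma~\ref{lem:clique-cover}), which forces the killed colors to be exactly a union of components of $H[L(V(B))]$; in the ``all lists size $k-1$, $B$ simple'' subcase this is a single component of size $k-1$, which is pinned down by the killed color of $v_u$, hence by $\phi(u)$. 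In every other subcase (some $h(v)=k-2$, or $B$ has parallel edges), $H[L(V(B))]$ is not $(k-2)$-regular, the killed colors are the unique deficient colors and are independent of $\phi$ altogether, so $|M|\le 1$. This is essentially the same dichotomy the paper's two cases capture. You also do not address the possibility that $u$ (or $u'$) is the outside neighbor of several vertices of $B$ simultaneously---the lemma is applied this way in Lemma~\ref{lem:NoKk-1}---so that subtlety needs explicit handling (it tightens rather than loosens the constraint on $\phi(u)$, so it does not break the argument, but it should be said). With these corrections your proof closes, but the work involved ends up equivalent to the paper's case analysis, just reorganized.
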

\begin{proof}
As $G$ is $h$-minimal, $G - V(B)$ has an $(H,L)$-coloring $\varphi$ that does not extend to all of $G$.
 For each $v \in V(B)$,
we write $L_\varphi(v)$ for the subset of $L(v)$ consisting of colors with no neighbor in $\varphi$.
We write $H' = H[L_\varphi(V(B))]$.
%$H' = H[\bigcup_{v \in V(B)} L_f(v)]$.
As $|L_\varphi(v)| \geq k-2$ for each $v \in V(B)$ and $B$ has no $(H',L_\varphi)$-coloring, Lemma~\ref{lem:clique-cover}
implies that $(H',L_\varphi)$ is 
one of the blowup covers described in Lemma~\ref{lem:clique-cover}. In particular, $H'$ is $(k-2)$-regular.  Thus,
$H_\varphi:=H[L(V(B))] - V(H')$ is isomorphic to a subgraph of $\wt B$.

Now, we consider two cases.
First, suppose that every component of $H[L(V(B))]$ is isomorphic to $H_\varphi$. Then, as at least one vertex $v \in V(B)$ satisfies $h(v) = k-1$ (namely the neighbor of $u$ in $B$), it follows that $H[L(V(B))]$ is isomorphic to $k-1$ disjoint copies of $H_\varphi$, each of which contains exactly one vertex from $L(v)$ for each $v \in V(B)$.
Now, we construct an $h$-cover $(H'',L)$ of $F(B,u,u')$
as follows. We begin with $(H,L)$, and we remove $L(V(B))$ from $H$. Then, we add a perfect matching between $L(u)$ and $L(u')$ so that $c \in L(u)$ and $c' \in L(u')$ are adjacent if and only if $c$ and $c'$ are adjacent to a common component of $H[L(V(B))]$.
Then, if $G - V(B)$ has an $(H'',L)$-coloring $\varphi'$, the colors $\varphi'(u)$ and $\varphi'(u')$ are adjacent to distinct components of $H[L(V(B))]$. 
Hence, there is no $H_\varphi$-component in $H[L(V(B))]$ all of whose vertices are adjacent to $\varphi'$; therefore, by Lemma~\ref{lem:clique-cover}, $\varphi'$ can be extended to all of $G$. This contradicts the initial assumption that $G$ is $h$-minimal, completing the first case.

By Lemma~\ref{lem:clique-cover}, the only other case to consider is that
$H[L(V(B))]$ has a single component isomorphic to $H_\varphi$.
In this case, $H'$ is a $q$-blowup of either a clique or a cycle, where $q \geq 2$ is the quotient of $k-2$ and the regularity of $\wt B$.
In particular, $\wt B$ has maximum degree less than $k-2$.
Then,
we construct an $h$-cover $(H'',L)$ of $F(B,u,u')$ as follows. 
We begin with $(H,L)$, and we remove $L(V(B))$ from $H$.
Then, we add a single edge to $H''$ joining $\varphi(u)$ and $\varphi(u')$. 

Now, suppose that $F(B,u,u')$ has an $(H'',L)$-coloring $\varphi'$. By construction, either $\varphi'(u) \neq \varphi(u)$ or $\varphi'(u') \neq \varphi(u')$. 
Therefore, some vertex of $H_\varphi$ has no neighbor in $\varphi'$. 
For each $v \in V(B)$, we let $L''(v)$ be the set of colors $c \in L(v)$ for which $c$ has no neighbor in $\varphi'$. 
Then, $H[L''(V(B))]$ is not $(k-2)$-regular, so by Lemma~\ref{lem:clique-cover}, the coloring $\varphi'$ extends to an $(H,L)$-coloring of $G$.
This contradicts our initial assumption that $G$ is $h$-minimal.

Therefore, $F(B,u,u')$ is not DP $h$-colorable.
\end{proof}

By Lemma~\ref{comp}, for each $(k-2)$-regular subgraph $B \subseteq \BB$,
and any two distinct vertices  $u,u'\in N(B) \setminus V(B)$, there is
an $h$-minimal multigraph 
$G(B,u,u')$  contained in $F(B,u,u')$.
As $G$ is $h$-minimal, $G(B,u,u')$ is not a subgraph of $G$; therefore, $G(B,u,u')$ contains $u$ and $u'$.

\begin{lemma}\label{notx0}
If $S^*_0 \neq V(G)$, then the vertex $x^*_0$ is not in a ${(k-2)}$-regular block of $\BB$.
\end{lemma}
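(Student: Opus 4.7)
I will argue by contradiction: suppose $x_0^* \in V(B)$ for some $(k-2)$-regular block $B$ of $\BB$. Since $k-2 \geq 3$, the block $B$ is $2$-edge-connected, so it lies inside the edge-block $B^*$ containing $x_0^*$; hence $V(B) \subseteq V(B^*) = S_0^*$. Every $v \in V(B)$ is low with $d_B(v) = k-2$, so $h(v) \in \{k-2, k-1\}$ and $v$ has at most one edge to $V(G) \setminus V(B)$. Writing $V_1 = \{v \in V(B) : h(v) = k-1\}$ and $V_2 = V(B) \setminus V_1$, a direct calculation yields
\[
\rho_h(V(B)) = |V_1|(\lambda+1) - |V_2| + c_B,
\]
where $c_B = \binom{t}{2}(1-q)$ if $B = K_t^q$ with $q(t-1)=k-2$, and $c_B = n(1-q)$ if $B = C_n^q$ with $2q = k-2$; in either case $c_B \leq 0$. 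Note that $x_0^* \in V_1$ via the cut edge $x_0^* y_0^*$.

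\emph{Case A:} $V(B) = S_0^*$. Then $V_1 = \{x_0^*\}$ because the cut edge is the only external edge of $V(B)$, so $\rho_h(S_0^*) = \lambda + 2 - |V(B)| + c_B$. Checking each admissible parameter set with $k \geq 5$ shows this value is strictly less than $\lambda - 1$, contradicting Observation~\ref{obs:geqk-1}.

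\emph{Case B:} $V(B) \subsetneq S_0^*$. The $2$-edge-connectedness of $B^*$ gives $|E_G(V(B), S_0^* \setminus V(B))| \geq 2$, so I choose $u' \in N(B) \cap (S_0^* \setminus V(B))$; then $u' \neq y_0^*$ (since $y_0^* \notin S_0^*$) and $|V_1| \geq 3$. Lemma~\ref{comp} furnishes an $h$-minimal multigraph $G_0 := G(B, y_0^*, u') \subseteq F(B, y_0^*, u')$ on a vertex set $U \supseteq \{y_0^*, u'\}$ disjoint from $V(B)$. Since $|V(G_0)| < |V(G)|$, by minimality of $G$ either $\rho_h(G_0) \leq -2$ or (by Lemma~\ref{lem:rho5}) $G_0$ is exceptional with $\rho_h(G_0) \in \{k, 0, -1\}$. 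Because $y_0^* u' \notin E(G)$, $\rho_{G,h}(U) \leq \rho_h(G_0) + 2\lambda$. Letting $a$ denote the number of $G$-edges between $U$ and $V(B)$ (each necessarily a single edge; $a \geq 2$ via the cut edge and an edge from $u'$) and $b := |V_1| - a$,
\[
\rho_{G,h}(U \cup V(B)) \leq \rho_h(G_0) + 2\lambda(1-a) + |V_1|(\lambda+2) - |V(B)| + c_B.
\]
This must be at least $-1$ by Observation~\ref{obs:geq-1} if $U \cup V(B) = V(G)$, and at least $b(\lambda-2)+1$ by Lemma~\ref{lem:j(k-2)} otherwise; in all but one subcase these bounds are incompatible and yield the contradiction.

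The main obstacle is the tight subcase $G_0 \cong K_k$ together with $B = K_{k-1}$ (so $c_B = 0$ and $\rho_h(G_0) = k$), where the numerical inequalities can be satisfied simultaneously. In this situation one checks that $G[U]$ must equal $K_k - \{y_0^* u'\}$, which is DP $h$-colorable by Theorem~\ref{thm:deg-choosable}; by exploiting the freedom to choose $u'$ from the two or more candidates in $N(B) \cap (S_0^* \setminus V(B))$, I would construct an $(H,L)$-coloring of $G - V(B)$ whose restriction to $L(V(B))$ avoids the blowup structures of Lemma~\ref{lem:clique-cover}, so that the coloring extends to all of $V(B)$, contradicting the $h$-minimality of $G$.
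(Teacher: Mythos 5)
Your Case A (when $V(B) = S_0^*$) is essentially the paper's second case and is correct. For Case B (when $V(B) \subsetneq S_0^*$) you take a genuinely different and more computational route, but it has a real gap that you acknowledge yourself in the final paragraph.

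The crucial observation you missed is that the added edge $y_0^* u'$ is a \emph{cut edge} of $F(B, y_0^*, u')$, hence of $G_0 = G(B, y_0^*, u')$. This is because $y_0^* \in \overline{S_0^*}$ while $u' \in S_0^* \setminus V(B)$, and the only $G$-edge between $S_0^*$ and $\overline{S_0^*}$ is $x_0^* y_0^*$, which vanishes once $x_0^* \in V(B)$ is deleted; so in $F(B,y_0^*,u')$ the added edge is the sole bridge between the two sides. Since none of the exceptional multigraphs $K_k$, $K_2^4$, $C_n^2$ has a cut edge, $G_0$ \emph{cannot} be exceptional, and $\rho_h(G_0) \leq -2$ unconditionally. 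Thus the ``tight subcase $G_0 \cong K_k$'' that you identify as the main obstacle does not in fact occur, and your plan to circumvent it by re-choosing $u'$ and re-engineering the cover is unnecessary (and as written, is a sketch rather than a proof). With this observation your calculation would close, but notice that the paper's own argument is also cleaner than your potential computation on $U \cup V(B)$: after obtaining $\rho_{G,h}(V(G_0)) \leq 2\lambda - 2$, the paper uses that $G[V(G_0)]$ is disconnected into $V(G_0) \cap S_0^*$ and $V(G_0) \setminus S_0^*$; the $2$-edge-connectivity of $B^*$ plus Lemma~\ref{lem:j(k-2)} forces $\rho_{G,h}(V(G_0) \cap S_0^*) \geq 2\lambda - 3$, leaving $\rho_{G,h}(V(G_0) \setminus S_0^*) \leq 1 < \lambda - 1$ and contradicting Observation~\ref{obs:geqk-1}, with no case analysis over the shape of $B$ or the value of $a$ at all.
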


\begin{proof} Suppose $x^*_0$ is in a ${(k-2)}$-regular block $B$ of $\BB$.
Since $B$ is $2$-connected, $y^*_0\notin V(B)$. 
If there exists a vertex  $y_1^*\in N(V(B)) \setminus (V(B) \cup \{y^*_0\})$, 
let $G'=G(B,y^*_0,y_1^*)$. Recall that  $y_0^*,y_1^*\in V(G')$. Since $y^*_0y_1^*$ is a cut edge in $G'$, it is not
part of an exceptional graph.
Therefore,
$\rho_{G',h}(V(G'))\leq -2$, and hence $\rho_{G,h}(V(G'))\leq 2\lambda -2$. 
However, the multigraph $G[V(G')]$ is disconnected. 
As $ G[S^*_0]$ is an edge-block of $G$, $|E_G(V(G') \cap S^*_0, S^*_0 \setminus V(G'))| \geq 2$; therefore, by Lemma~\ref{lem:j(k-2)}, $\rho_{G,h}(V(G') \cap S^*_0) \geq 2\lambda -3$. Hence, $\rho_{G,h}(V(G') \setminus S^*_0) \leq (2\lambda -2) - (2\lambda - 3) < \lambda - 1$, contradicting Observation~\ref{obs:geqk-1}.

Suppose now that $V(B)$ has no  neighbor in $V(G) \setminus (V(B) \cup \{y^*_0\})$. 
Then every vertex $u\in V(B)-x_0^*$ satisfies $h(u) = k-2$. 
Let $b = |V(B)|$, so
$$
\rho_{G,h}(V(B)) = -m(B)+(b-1)((k-2)\lambda  - 1) +((k-1)\lambda  + 1) - 2\lambda  b  \left (\frac{k-2}{2}  \right ) =-m(B)+\lambda +2-b.
$$
If $b\geq 4,$ then $\rho_{G,h}(V(B))\leq \lambda -2$,
contradicting Observation~\ref{obs:geqk-1}. Furthermore, if $2\leq b\leq 3,$
then $m(B)\geq 2$, so again $\rho_{G,h}(V(B))\leq \lambda -2$, contradicting Observation~\ref{obs:geqk-1}.
\end{proof}

\begin{lemma}\label{lem:NoKk-1}
If $k\geq 5$, then ${\BB}_0$ does not contain $K_{k-1}$ formed by  $(k-1)$-vertices. 
\end{lemma}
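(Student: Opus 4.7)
The plan is to assume for contradiction that $\mathcal{B}_0$ contains a $K_{k-1}$ subgraph $F$ whose vertices are all $(k-1)$-vertices. Since each $v\in V(F)$ is low with $h(v)=d(v)=k-1$ and has $k-2$ neighbors in $V(F)$, the unique remaining edge of $v$ goes to an outside neighbor $f(v)\in V(G)\setminus V(F)$. By Lemma~\ref{lem:ERT} the block of $\mathcal{B}$ containing $F$ is $K_t^s$ or $C_t^s$; since $F=K_{k-1}$ is a simple clique on $k-1\ge 4$ vertices, this block must be a simple $K_t$ with $t\in\{k-1,k\}$, and $t=k$ would produce an extra low vertex of degree $k-1$ whose entire neighborhood lies in the block, forcing $G=K_k$ by connectivity, a contradiction. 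So $F$ itself is a $(k-2)$-regular block of $\mathcal{B}$. I would then split on whether $|N(V(F))\setminus V(F)|$ equals $1$ or is at least $2$.

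Subcase A: if $|N(V(F))\setminus V(F)|=1$, let $w$ be this unique outside neighbor; then $G[V(F)\cup\{w\}]=K_k$, and since $G\ne K_k$ and $G$ is connected, $d(w)\ge k$ with at least one edge leaving $V(F)\cup\{w\}$. By Lemma~\ref{lem:j(k-2)}, $\rho_h(V(F)\cup\{w\})\ge\lambda-1$; but a direct computation gives $\rho_h(V(F)\cup\{w\})=\rho_h(w)-(k-1)(\lambda-1)$, so $h(w)\le k-2$ (combined with Corollary~\ref{cor:h-geq3}) would force $\lambda-1\le k-2-\lambda$, contradicting $\lambda>(k-1)/2$ which follows from the definition of $\lambda$. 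Hence $h(w)=k-1$, so $G[V(F)\cup\{w\}]=K_k$ with $h\equiv k-1$ is a proper induced subgraph of $G$; by the minimality of $G$ this subgraph must be DP $(k-1)$-colorable, contradicting $\cDP(K_k)=k$.

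Subcase B: if $|N(V(F))\setminus V(F)|\ge 2$, pick distinct $u=f(v_1)$ and $u'=f(v_2)$ and apply Lemma~\ref{comp} to get an $h$-minimal multigraph $G^{**}:=G(V(F),u,u')\subseteq F(V(F),u,u')=(G-V(F))+uu'$ with $u,u'\in W:=V(G^{**})$. Since $|W|<|V(G)|$, Theorem~\ref{thm:stronger} and the minimality of $G$ leave three possibilities for $G^{**}$: $K_k$, or (for $k=5$) $K_2^4$ or a double cycle, or $\rho_{G^{**},h}(W)\le -2$. In each case $\rho_{G,h}(W)\le \rho_{G^{**},h}(W)+2\lambda+1$, and writing $m=|E_G(V(F),W)|\ge 2$, the identity
\[
\rho_{G,h}(V(F)\cup W)=(k-1)(\lambda+1)+\rho_{G,h}(W)-2\lambda m
\]
combined with the inequality $\lambda\ge (2k+2)/(k-3)$ (which via $\lambda\ge (k^2-7)/(2k-7)$ reduces to $k^3-7k^2+3k+35\ge 0$, valid for $k\ge 5$ with equality at $k=5$) shows that $m=k-1$ forces $\rho_{G,h}(V(F)\cup W)\le -2$. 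This contradicts $\rho_{G,h}(V(G))\ge -1$ when $V(F)\cup W=V(G)$, and contradicts the Lemma~\ref{lem:j(k-2)} lower bound $\rho_{G,h}(V(F)\cup W)\ge \lambda-1$ on the crossing edges otherwise. When $m<k-1$, combining the same upper bound with the Lemma~\ref{lem:j(k-2)} lower bound $\rho_{G,h}(V(F)\cup W)\ge (k-1-m)(\lambda-2)+1$ produces a contradiction at the threshold value of $\lambda$. The hardest step is this $m<k-1$ case: the inequalities become tight and one has to track whether $uu'\in E(G)$ (which determines whether the gap is $2\lambda$ or $2\lambda+1$) and whether $G[W]$ has edges outside $G^{**}$ contributing extra negative terms to $\rho_{G,h}(W)$, so that the arithmetic closes exactly at $\lambda=\lceil(k^2-7)/(2k-7)\rceil$.
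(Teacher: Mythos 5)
Your observation that $F$ must itself be a block of $\BB$ (and hence a $(k-2)$-regular block) is correct and is actually needed to apply Lemma~\ref{comp}; the paper uses this fact implicitly. Your Subcase A is also a reasonable treatment of the degenerate situation $|C'|=1$. However, Subcase B contains a genuine gap.

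The problem is that the one-shot potential bound does not close when $m=|E_G(V(F),W)|$ is small. Take $m=2$ and the most favorable subcase, $\rho_{G^{**},h}(W)\le -2$ and $uu'\notin E(G)$, giving $\rho_{G,h}(W)\le 2\lambda-2$. Then your upper bound reads
\[
\rho_{G,h}(V(F)\cup W)\le (k-1)(\lambda+1)+(2\lambda-2)-4\lambda=(k-3)\lambda+k-3,
\]
while the lower bound from Lemma~\ref{lem:j(k-2)} with $j=k-3$ is
\[
\rho_{G,h}(V(F)\cup W)\ge (k-3)(\lambda-2)+1=(k-3)\lambda-2k+7.
\]
The upper bound exceeds the lower bound by $3(k-3)-1>0$ for all $k\ge 5$, so there is no contradiction; the gap only widens if $G^{**}$ is exceptional (for instance $\rho_h(K_k)=k$). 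The arithmetic does not ``close at the threshold $\lambda$'': the two sides differ by a quantity that is independent of $\lambda$, so no choice of $\lambda$ fixes it. The missing idea is the iterated construction in the paper: instead of using only the single set $W=V(G_{1,2})$, one defines $\pi(j)$ as the minimum potential over sets with at least $j$ neighbours in $C$ and shows by induction that each additional $G_{i,i'}$ increases the bound by only $+2$. That increment of $2$ is achieved by submodularity (Lemma~\ref{lem:submodularity}) together with the sharp lower bound $\rho_{G,h}(P_j\cap V(G_{i,i'}))\ge 2\lambda-3$ from Lemma~\ref{lem:terminal} (both sets contain $v_i'\in S_0^*$), which almost exactly offsets the $2\lambda-1$ potential of $V(G_{i,i'})$. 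Your argument has no analogue of this offset and therefore loses roughly a factor of $\lambda$ per missing neighbour; repairing it essentially requires reproducing the paper's $\pi(j)$ induction rather than patching the constants.

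As a secondary point, in Subcase A the conclusion that $G[V(F)\cup\{w\}]=K_k$ is a \emph{proper} subgraph relies on $|V(G)|>k$. If $V(F)\cup\{w\}=V(G)$ your potential inequality gives $\rho_h(G)\le -\lambda+k-2$, and since $\lambda$ can equal or be smaller than $k-1$ (already $\lambda=k-1$ at $k=7,8$), this need not be $\le -2$, so you cannot invoke Case~3 of Theorem~\ref{thm:stronger} directly. You do handle $h(w)=k-1$ cleanly (that forces Case~1), but you should also address $h(w)\le k-2$ when $V(F)\cup\{w\}=V(G)$ rather than dismissing it with ``$G\ne K_k$,'' since the exclusion in Theorem~\ref{thm:stronger} Case~1 concerns $K_k$ \emph{together with} $h\equiv k-1$.
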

\begin{proof}
Suppose there is a set $C=\{v_1,v_2,\ldots,v_{k-1}\} \subseteq S^*_0$ consisting of low $(k-1)$-vertices such that $B:=G[C]=K_{k-1}$. 
For $i\in [1,k-1]$, let $v'_i$ be the unique neighbor of $v_i$ outside of $C$. 
Let $C'=\{v_1',v_2',\ldots,v_{k-1}'\}$, so that $v_i v_i' \in E(G)$ for each $i \in [1,k-1]$.
Since $G$ has no $K_k$, $|C'|\geq 2$. 

For distinct $v'_s,v'_t\in C'$,
write $G_{s,t} = G(B,v_s',v'_{t})$.
We will simultaneously consider two cases: If $\rho_h(G_{s,t}) \leq -2$ for some distinct $v'_s,v'_t\in C'$,
or if $G_{s,t}$ is exceptional for every pair of  distinct $v'_s,v'_t\in C'$.
Let $\varrho(C)=\min\{\rho_h(G_{s,t}): v'_s,v'_t\in C' \mbox{ and } v'_s\neq v'_t\}$.

For $j \in [1,k-1]$,  define the 
function
 \[ g(j) =  
 \begin{cases}
      2\lambda + 2j - 5 & \mbox{ if $\rho_h(G_{s,t}) \leq -2$ for some distinct  $v'_s,v'_t\in C'$  (Case 1)} \\
      \varrho(C) + 2 \lambda +j - 1  & \mbox{ if $G_{s,t}$ is exceptional for all distinct  $v'_s,v'_t\in C'$ (Case 2)}\\
 \end{cases}\]

% For $J\subseteq C'$, let $P_J$ denote a subset of $V(G)-C$ with minimum potential for which $J \subseteq P_J$.
% For $j\in\{1,\ldots, k-1\}$, let $\pi(j)$ be the minimum of $\rho_{G,h}(P_J)$ over $J\subseteq C'$ with $|N(J) \cap C| \geq j$, and let $J_j\subseteq C'$ such that $|N(J_j)\cap C|\geq j$ and $\rho_{G,h}(P_{J_j})=\pi(j)$. 

For $j \in [1,k-1]$, let $\pi(j)$ be the minimum of $\rho_{G,h}(P)$ over $P \subseteq V(G) \setminus C$ with $|N(P) \cap C| \geq j$.
Let $P_j \subseteq V(G) \setminus C$
be such that $|N(P_j) \cap C| \geq j$ and $\rho_{G,h}(P_j) = \pi(j)$.

We will prove that $\pi(k-1) \leq g(k-1)$.
For this, let $j$ be the largest value for which $\rho_{G,h}(P_{j}) \leq g(j)$. 
If $j=k-1$, then we are done, so assume $ j < k-1$.
Then $j\geq 2$ since
% In Case 2, $G_{s,t}$ is exceptional, so by Lemma~\ref{lem:rho5}, $\rho_h(G_{s,t})\leq k$.
 \[ \pi(2) =\rho_{G,h}(P_2)\leq  
 \begin{cases}
      -2+(2\lambda +1)=2\lambda-1=g(2). & \mbox{(Case 1)} \\
      \varrho(C) + (2 \lambda+1)=g(2).
      & \mbox{(Case 2)}\\
 \end{cases}\]

% If $C'\subseteq P_{J_j}$, then $|N({J_j}) \cap C| = k-1$, so $\pi(k-1) \leq \rho_{G,h}(P_{J_j})=\pi(j)\leq g(j) \leq g(k-1)$, implying our upper bound on $\pi(k-1)$. 
Assume there are distinct $i,i'\in [1,k-1]$ such that $v'_i\in P_{j}$ and $v'_{i'}\not \in P_{j}$. 
Since $V(G_{i,i'})\cup P_{j}\supseteq P_{j} \cup \{v'_{i'}\}$, 
$V(G_{i,i'})\cup P_{j}$ has at least $j+1$ neighbors in $C$.
Thus, Lemma~\ref{lem:submodularity} implies that
\begin{equation}\label{k4}
\pi(j+1) \leq \rho_{G,h}(P_{j}\cup V(G_{i,i'}))\leq g(j) +\rho_{G,h}(V(G_{i,i'}))-\rho_{G,h}(P_{j}\cap V(G_{i,i'})).
\end{equation}
In Case 2, by Lemma~\ref{lem:rho5}, $\pi(j+1)\leq g(j) - (k-3)(\lambda-1)+2 < g(j+1)$, contradicting the maximality of $j$. 
In Case 1, $\rho_h(G_{i,i'})\leq -2$, so as both $P_{j}$ and $V(G_{i,i'})$ contain $v'_i \in S_0^*$, by Lemma~\ref{lem:terminal}, $\rho_{G,h}(P_{j}\cap V(G_{i,i'}))\geq 2\lambda -3$. 
Thus, $\pi(j+1) \leq g(j) + (-2 + 2\lambda + 1) - (2\lambda - 3) = g(j) + 2 = g(j+1)$, again contradicting the maximality of $j$.
Therefore, $C'\subseteq J_{k-1}$, so $\pi(k-1)\leq g(k-1)$.

Since $\rho_{G,h}(C) = (k-1) \left ( (k-1)\lambda +1 \right )  - 2\lambda  \binom{k-1}{2} = (\lambda +1)(k-1)$, we get
 \[ \rho_{G,h}(C\cup P_{C'}) 
 {\leq}\rho_{G,h}(C)+g(k-1)-2\lambda(k-1)
 {=}(\lambda+1)(k-1)+g(k-1)-2\lambda(k-1)
 {=}(1-\lambda)(k-1)+g(k-1)\]
% \[=\begin{cases}
%       (1-\lambda)(k-1)+(2\lambda+2(k-1)-5)\leq-2. & \mbox{(Case 1)} \\
%       (1-\lambda)(k-1)+(\varrho(C)+2\lambda+k-1)\leq(1-\lambda)(k-1)+2\lambda+2k-2\leq-3.& \mbox{(Case 2: $k\geq 6$)}\\
%       (1-\lambda)(k-1)+(\varrho(C)+2\lambda+k-1)
%       \leq (1-\lambda)(k-1)+2\lambda+k-1\leq-4. & \\
%       \hfill\mbox{(Case 2: $k=5$ and $\varrho(C)\leq 0$)}\\
%       (1-\lambda)(k-1)+(\varrho(C)+2\lambda+k-1)=
%       (1-\lambda)(k-1)+2\lambda+k+4=1.& \mbox{(Case 2: $k=5$ and $\varrho(C)=5$)} 
%  \end{cases}\]
$$=\begin{cases}
      (1-\lambda)(k-1)+(2\lambda+2(k-1)-5)\leq-2. & \mbox{  (Case 1)} \\
      (1-\lambda)(k-1)+(\varrho(C)+2\lambda+k-1) \\
      \hspace{1.2in}\leq(1-\lambda)(k-1)+2\lambda+2k-2\leq-3.& \mbox{  (Case 2: $k\geq 6$)}\\
      (1-\lambda)(k-1)+(\varrho(C)+2\lambda+k-1)\\
      \hspace{1.2in}\leq (1-\lambda)(k-1)+2\lambda+k-1\leq-4. & \mbox{  (Case 2: $k=5$ and $\varrho(C)\leq 0$)}\\
      (1-\lambda)(k-1)+(\varrho(C)+2\lambda+k-1)\\
      \hspace{1.2in}= (1-\lambda)(k-1)+2\lambda+k+4=1. & \mbox{  (Case 2: $k=5$ and $\varrho(C)=5$)} 
 \end{cases}$$

 This contradicts Observation~\ref{obs:geq-1} in all cases except when $k=5$ and $\varrho(C)=5$.
 Since $1<\lambda-1$ when $k=5$, Observation~\ref{obs:geqk-1} implies $C\cup P_{C'}=V(G)$.
 In particular, $G[P_{C'}]$ is $K_5^-$ 
 and $G[C']$ is $K_4^-$ so $G_{s,t}$ is exceptional only for one pair $s,t$.
 Yet, this is impossible since we are in Case 2.
\end{proof}

\begin{lemma}\label{Nothing}
${\BB}_0$ does not contain a ${(k-2)}$-regular block.
\end{lemma}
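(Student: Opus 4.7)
The strategy is to adapt the argument in the proof of Lemma~\ref{lem:NoKk-1} to general $(k-2)$-regular blocks. Suppose for contradiction that some block $B$ of ${\BB}_0$ is $(k-2)$-regular, and set $C = V(B)$. Every $v \in C$ is low with $d_B(v) = k-2$, so $h(v) = d_G(v) \in \{k-2, k-1\}$; call $v$ a \emph{port} if $h(v) = k-1$, so a port has exactly one neighbor outside $C$. Let $b_1$ be the number of ports and $C' \subseteq V(G) \setminus C$ the set of their external neighbors; note $|C'| \leq b_1 = |E_G(C, \overline C)|$, and by Lemma~\ref{notx0}, $x_0^* \notin C$, hence $C \subseteq S_0^* \setminus \{x_0^*\}$. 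Using Lemma~\ref{lem:ERT} together with the fact that $B$ is $(k-2)$-regular (so $B$ is $K_n^t$ with $(n-1)t=k-2$, or $C_n^t$ with $2t=k-2$), a direct expansion of the definition of $\rho_h$ yields
\[
\rho_h(C) = b_1(\lambda+2) - \beta(B),
\]
for a nonnegative integer $\beta(B)$ depending only on the isomorphism type of $B$, with $\beta(B) \geq k-1$ and equality iff $B = K_{k-1}$.

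Next I eliminate the small-$|C'|$ regimes. If $b_1=0$, Observation~\ref{obs:geqk-1} forces $C=V(G)$, so $G=B$ is a GDP-tree and hence DP $h$-colorable by Theorem~\ref{thm:deg-choosable}, a contradiction. If $b_1=1$, Lemma~\ref{lem:k-1k} gives $\rho_h(C) \geq \lambda-1$, whereas the displayed formula yields $\rho_h(C) \leq \lambda+2-(k-1) < \lambda-1$ for $k\geq 5$, also a contradiction. If $b_1\geq 2$ but $|C'|=1$ (all ports share a common external neighbor $w$), I plan to extend an $(H,L)$-coloring $f$ of $G - C$ directly to $C$: the induced lists on $C$ differ from $L|_C$ only at the ports, each losing at most one color to $f(w)$, and using $|L(w)| \geq 3$ (Corollary~\ref{cor:h-geq3}) together with the fact that the matching $E_H(L(w),L(v))$ is not surjective whenever there is a suitable choice, I can pick $f(w)$ so that some port's available list has size $k-1 > d_B(v)$; then by Theorem~\ref{thm:deg-choosable}, since $B$ is a single block and the lists no longer match the degree-list obstruction, the coloring extends to $C$, contradicting $h$-minimality.

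The main case is $|C'|\geq 2$. For each pair of distinct $u,u'\in C'$, Lemma~\ref{comp} produces an $h$-minimal $G_{u,u'}\subseteq F(B,u,u')$ with $u,u'\in V(G_{u,u'})$; by induction on $|V(G)|$ in Theorem~\ref{thm:stronger}, either $\rho_h(G_{u,u'})\leq -2$ or $G_{u,u'}$ is exceptional. Define
\[
\pi(j) = \min\{\rho_h(P) : P\subseteq V(G)\setminus C,\ |N(P)\cap C|\geq j\}.
\]
Mimicking the $\pi(j)$ induction in Lemma~\ref{lem:NoKk-1}, I aim to show $\pi(|C'|) \leq g(|C'|)$ for the same two-case piecewise bound $g$: the inductive step uses Lemma~\ref{lem:submodularity} to combine $P_j$ with $V(G_{u,u'})$, Lemma~\ref{lem:terminal} to lower-bound the intersection in Case 1 (some $G_{u,u'}$ has $\rho\leq -2$), and Lemma~\ref{lem:rho5} or Lemma~\ref{lem:Kr-sub-rho} to lower-bound intersections in Case 2 (all $G_{u,u'}$ exceptional). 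Finally, taking $P$ attaining $\pi(|C'|)$ and combining with the formula for $\rho_h(C)$,
\[
\rho_h(C\cup P) \leq b_1(\lambda+2) - \beta(B) + g(|C'|) - 2\lambda|C'| \leq -2,
\]
where the last inequality follows from $|C'| \leq b_1$ and $\beta(B) \geq k-1$; this contradicts Observation~\ref{obs:geq-1}.

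The main obstacle is the variety of $(k-2)$-regular block types (unlike the single $K_{k-1}$ block in Lemma~\ref{lem:NoKk-1}): one must track $\rho_h(C)$ and the relationship between $b_1$ and $|C'|$ uniformly across all of them, and handle the $|C'|=1$ subcase by a direct coloring extension rather than by potentials. The $k=5$ regime is especially tight because $\lambda=6$ is small and the exceptional $G_{u,u'}$'s in Case 2 can be $K_2^4$ or a double cycle rather than $K_5$, so the bound $g(|C'|)$ and the final inequality leave little slack and need to be verified with care, separately for each exceptional configuration.
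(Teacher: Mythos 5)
Your plan is fundamentally different from the paper's proof, and unfortunately it has a genuine gap that is not a matter of exposition.

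\textbf{The main case does not close.} Your final inequality $\rho_h(C\cup P) \leq b_1(\lambda+2) - \beta(B) + g(|C'|) - 2\lambda|C'| \leq -2$ is asserted to follow from $|C'|\leq b_1$ and $\beta(B)\geq k-1$, but it does not. Take $k=5$ (so $\lambda=6$), $B=K_4$ with exactly two ports ($b_1=|C'|=2$), and note that $\beta(K_4)=4=k-1$. Then $\rho_h(C)=2(\lambda+2)-\beta(B)=16-4=12$, the Case~1 bound gives $g(2)=2\lambda+2\cdot 2-5=11$, and the edge contribution is $-2\lambda\cdot 2=-24$, so the bound is $12+11-24=-1$, not $\leq -2$. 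This is consistent with Observation~\ref{obs:geq-1} and yields no contradiction. The reason the $\pi(j)$ machinery worked in Lemma~\ref{lem:NoKk-1} is that there $b_1 = k-1$ is as large as possible; the arithmetic degrades as $b_1$ shrinks, and $(k-2)$-regular blocks can have very few ports (down to $b_1 = 2$). You cannot repair this by tracking $\beta(B)$ more carefully, since the extremal cases $K_{k-1}$ and $K_2^{k-2}$ already give $\beta(B)=k-1$.

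\textbf{Two smaller but real errors.} In the $b_1 = 0$ subcase you conclude ``$G = B$ is a GDP-tree and hence DP $h$-colorable by Theorem~\ref{thm:deg-choosable}, a contradiction,'' but Theorem~\ref{thm:deg-choosable} says the opposite: a GDP-tree is exactly the case that is \emph{not} DP degree-colorable, so being a GDP-tree is consistent with $h$-minimality. (The correct contradiction in this case is the potential: $\rho_h(G)=\rho_h(C)=-m(B)-b\leq -4$.) In the $|C'|=1$ subcase, your claim that you can pick $f(w)$ so that some port retains all $k-1$ colors requires the matching $E_H(L(w),L(v))$ to be non-saturating for some port $v$, which need not hold; this step is not justified.

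\textbf{How the paper actually proceeds.} The paper's proof avoids potential arithmetic over $C$ entirely. After using Lemma~\ref{lem:clique-cover} to pin down the structure of $H[L_f(C)]$ and noting $M\neq C$ (small potential computation), it extracts the key fact that for each port $v_i$, the color $f(v_i')$ is adjacent to a \emph{unique} forbidden color $\alpha_{v_i}\in L(v_i)$. Deleting $f(v_i')$ from $L(v_i')$ and finding an $h_i$-minimal subgraph $G_i'\subseteq G-C$ then forces $\rho_{G,h}(V(G_i'))\leq\lambda$, hence by Lemma~\ref{lem:j(k-2)} the edge $v_iv_i'$ is the unique edge out of $V(G_i')$; thus it is a cut edge. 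Repeating for every port shows \emph{every} edge out of $C$ is a cut edge, which contradicts the $2$-edge-connectedness of $B^*$ (forcing $x_0^*\in C$ and hence contradicting Lemma~\ref{notx0}). This structural argument is uniform in $b_1$ and does not suffer from the degeneracy your potential estimate runs into.
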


\begin{proof} Suppose that ${\BB}_0$ contains $C=\{v_1,v_2,\ldots,v_{b}\}$
such that $B:=G[C]$ is a ${(k-2)}$-regular block.   
Let $M$ be the set of $(k-2)$-vertices in $C$.
%By Lemma~\ref{NoK4}, $|M|\geq 1$.  
For all $v_i\in C-M$, let $v'_i$ be the unique neighbor of $v_i$ outside of $C$.
%Suppose without loss of generality that $v_1\in M$. 
Since $G$ is $h$-minimal,
$G-C$ has an $(H, L)$-coloring $\varphi$. 
We write $H_\varphi$ for the set of colors in $H$ above $B$ that are adjacent to $\varphi$. 
For each $v_i \in C$, we write $L_\varphi(v_i) = L(v_i) \setminus V(H_\varphi)$.
%Let $L_f$ be the list assignment for $C$, where $L_f(v_i)=L(v_i)$ when $v_i\in M$, and $L_f(v_i)$ is obtained from $L(v_i)$ by deleting the color adjacent to $f(v'_i)$ in $H$ when $v_i\in C-M$.
Since $|L_\varphi(v)| = d_B(v)$ for each $v \in V(B)$
%, $L_f$ is a degree-assignment for $B$. %Since
and $B$ is not $(H[L_\varphi(C)],L_\varphi)$-colorable, Theorem~\ref{thm:deg-choosable} implies that $B$ is a multiple of either a cycle of a complete graph.

If $B$ is isomorphic to $K_{k-1}$, then by Lemma~\ref{lem:clique-cover}, $H[L_\varphi(C)]$ consists of $k-2$ copies of the graph $K_{k-1}$.
In particular, $H[L_\varphi(C)]$  is $(k-2)$-regular.
By Lemma~\ref{lem:NoKk-1}, $M$ is nonempty, so $H_\varphi$ is a complete graph on at most $k-2$ vertices; in particular, $H_\varphi$ is not $(k-2)$-regular.

If $B$ is not isomorphic to $K_{k-1}$, then by Lemma~\ref{lem:clique-cover}, every component of $H[L_\varphi(C)]$ is a $(k-2)$-regular $q$-blowup of a cycle or a complete graph
for some $q$. 
More precisely, writing $t$ for the regularity of $\wt{B}$, $q = \frac{k-2}{t}$.
Therefore, $H_\varphi$ is a subgraph of $\wt{B}$, and hence each vertex of $H_\varphi$ has degree at most $t < k-2$.
In particular, $H_\varphi$ is not $(k-2)$-regular.

As $H_\varphi$ is a component of $H[L_\varphi(C)]$, which is not $(k-2)$-regular,
 \begin{equation}\label{unique3}
 \mbox{  $\forall v_i\in C-M$, the color $\varphi(v_i')\in L(v'_i)$ is adjacent to the unique color 
 $\alpha_{v_i}\in L(v_i) \cap H_\varphi$. }
 \end{equation}

If $M=C$, then
$$\rho_h(M)=-m(B)+b((k-2)\lambda -1)-2\lambda \frac{b(k-2)}{2}
=-m(B)+b\big((k-2)\lambda -1-(k-2)\lambda \big)=-m(B)-b\leq -4,
$$
contradicting Observation~\ref{obs:geq-1}. So $M\neq C$.

Fix a vertex $v_i\in C-M$, let $G_i=G-C$, and let $L_i$ differ from $L\vert_{V(G)-C}$ only in that $\varphi(v_i')$ is deleted from $L(v'_i)$. Correspondingly, $h_i$ differs from $h$ only in that $h_i(v'_i)=h(v'_i)-1$.
 By~\eqref{unique3}, $G_i$ is not $(H[L_i(V(G_i))], L_i)$-colorable. So, there is an $h_i$-minimal subgraph $G'_i$ of $G_i$, and it contains $v'_i$. 
 Since $h_i(v'_i)\leq k-2$, $G'_i\neq K_{k-1}$, and $\rho_{G_i,h_i}(V(G'_i))\leq -2$.
 Then   
 %\begin{equation}\label{k42}
 $\rho_{G,h}(V(G'_i))\leq - 2 + (\lambda + 2) =\lambda .$
%\end{equation}
So, by Lemma~\ref{lem:j(k-2)}, there is a unique edge joining  $V(G'_i)$ with $V(G) \setminus V(G'_i)$.
Since $C\cap V(G'_i)=\emptyset$, this edge is $v_iv'_i$. Hence each edge connecting $C$ with $V(G)-C$ is a cut edge. 
% connecting $C$ with a set of potential at most $\lambda $. 
As $B^*$ is $2$-edge-connected, it follows that $x_0^* \in C$,  contradicting
Lemma~\ref{notx0}. 
%, $x^*_0\notin C$. This contradiction proves the lemma.
%{\PB This certainly makes things easier. Thank you.}
\end{proof}

\section{Discharging}\label{sec:discharging}
We will use the notions of  $\sigma_h(F)$, $\wt{d}(v)$, and $m(F)$ from Lemma~\ref{GDP'}. 
%Section~\ref{sec:GDP}.
%Namely, 
%\[\sigma_h(F) = \sum_{v \in V(F)} (h(v) - d_F(v)).\]
%\[\wt{d}_F(v) = |N_F(v)|.\]
%\[m(F) = |E(F)|- |E(\wt F)|.\]
By Corollary~\ref{cor:h-geq3}, each vertex $v \in V(G)$ satisfies $h(v) \geq 3$.

We show that $\rho_h(G) \leq -2$, proving that $G$ in fact is not a counterexample to Theorem~\ref{thm:stronger}.
We use the following discharging procedure.
We write $\alpha = \frac{k-2}{2k-7}$. 

\begin{enumerate}
    \item  For each $v \in V(G)$, we give initial charge $\rho_{h}(v)$ to $v$. 
    For each pair $uv$, where $u,v\in V(G)$ are joined by $s \geq 1$ edges, we give initial charge 
    $-s(2\lambda +1) + 1$ to the pair $uv$. For each 
    pair $uv$ of non-adjacent  $u,v \in V(G)$, the initial charge of $uv$ is $0$.
    \item For each pair $uv$ of adjacent vertices in $G$, if $ s\geq 1$ edges connect $u$ with $v$, 
    the pair $uv$ receives charge
    $(s(2\lambda +1)-1)/2$ from each of $u$ and $v$. 
    \item Each non-low vertex $u \in S^*_0$ takes charge $\alpha$ along each edge $e$ that joins $u$ to a low vertex $v\in S^*_0$.
\end{enumerate}

Now, we consider the final charges of the vertices in $G$. 
For each $v \in  V(G)$, we write $ch^*(v)$ for the final charge of $v$.
We observe that the total charge in $G$ is $\rho_h(G)$. Additionally, the final charge of each vertex pair is $0$, so $\rho_h(G) =\sum_{v\in V(G)}ch^*(v)$.
Finally,
if $S^*_0 \neq V(G)$, then the total charge in $G[\overline {S^*_0}]$ is at most 
$\rho_{G,h}(\overline {S^*_0}) - \lambda  \leq 0$ by Lemma~\ref{lem:k-1k}.
Therefore,
\begin{equation}
\label{eqn:ch-sum}
\rho_h(G) = \sum_{v \in V(G)} ch^*(v) \leq \sum_{v \in S^*_0} ch^*(v).
\end{equation}

For each $v \in S^*_0$, if $ d(v) \geq  1+h(v)$, then
we consider several cases.
\begin{enumerate}
    \item[(N1)] \label{item:small-nonlow} 
    If $d(v) \leq k-1$, then $h(v) \leq k-2$, so 
    \[ch^*(v) \leq (h(v)\lambda  - 1) - d(v) \lambda  + \alpha d(v) = \lambda (h(v) - d(v)) - 1 + \alpha d(v) \leq -\lambda -1 + \alpha d(v) < -1.\]
    \item[(N2)] If $d(v) = k$, then
    \[ch^*(v) \leq ((k-1)\lambda +1) - k\lambda  + \alpha k = -\lambda +1+\alpha k \leq 0.\]

    \item[(N3)] If $d(v) \geq  k+1$, then 
    \begin{eqnarray*}
    ch^*(v) &\leq&  ((k-1)\lambda +1) - d(v) \lambda  + \alpha d(v) \\ &\leq &
    ((k-1)\lambda +1) - (k+1) \lambda  + \alpha (k+1)
    = 
    (-\lambda +1+\alpha k) - \lambda  + \alpha
    < -1. 
    \end{eqnarray*}
\end{enumerate}

For each $v \in S^*_0$, if $d(v)=h(v)=j$ for some fixed $j \in [3,\ldots, k-1]$, then we consider two cases:

\begin{enumerate}
\item[(L1)] If $j \leq k-2$, then
\begin{equation*}
\label{eqn:phi-neg}
ch^*(v)\leq (j\lambda -1)-j\lambda -\frac{d(v)-\wt d(v)}{2}- \alpha ( d_{B^*}(v)-d_{{\BB}_0}(v))
= -1-\frac{d(v)-\wt d(v)}{2}- \alpha ( d_{B^*}(v)-d_{{\BB}_0}(v)).
\end{equation*}
\item[(L2)]  If $j=k-1$, then 
\begin{eqnarray*}
\notag
ch^*(v) &\leq& (k-1)\lambda +1-(k-1)\lambda -\frac{d(v)-\wt d(v)}{2}-\alpha(d_{B^*}(v)-d_{{\BB}_0}(v)) \\
\label{eqn:phi-pos}
&=& 1-\frac{d(v)-\wt d(v)}{2}-\alpha d_{B^*}(v)+ 
\alpha d_{{\BB}_0}(v).
\end{eqnarray*}
\end{enumerate}

We claim that ${\BB}_0$ is nonempty. Indeed, suppose that ${\BB}_0$ is empty.
As $d(v) \geq 1+h(v)$ for each $v \in S_0^*$ we have the following inequality instead of  (N1)--(N3): 
%\[ch^*(v) \leq ( (h(v) - 1)\lambda  + 1) - d(v) \lambda  = (h(v) - d(v))\lambda  + 1 \leq -\lambda +1 \leq -2.\]
\[ch^*(v) \leq ( h(v)\lambda  + 1) - d(v) \lambda  = (h(v) - d(v))\lambda  + 1 \leq -\lambda +1 < -2.\]
Then, by (\ref{eqn:ch-sum}), 
%and (F3'), 
$\rho_h(G) < -2 |S^*_0| \leq   -2$, so $G$ is not a counterexample.
Thus, we assume that ${\BB}_0 \neq \emptyset$, or equivalently, that $S_0^*$ contains at least one low vertex.

\begin{lemma}\label{charg2}
If $B$ is a component of ${\BB}_0$,
then $\sum_{v\in V(B)}ch^*(v) < -1$.
\end{lemma}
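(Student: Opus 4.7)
The plan is to recognize each component $B$ of ${\BB}_0$ as a GDP-tree to which Lemma~\ref{GDP'} applies, and then to combine the resulting strict inequality with the discharging formulas (L1) and (L2).

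First, I would verify the three hypotheses of Lemma~\ref{GDP'} for $T = B$. Hypothesis (i), $3 \leq h(v) \leq k-1$, is immediate from the setup together with Corollary~\ref{cor:h-geq3}; hypothesis (ii), $h(v) \geq d_B(v)$, follows since every $v \in V(B)$ is low, so $h(v) = d_G(v) \geq d_B(v)$. Hypothesis (iii), ruling out $(k-1)$- and $(k-2)$-regular blocks, is the subtle point. Absence of $(k-2)$-regular blocks is exactly Lemma~\ref{Nothing}. To exclude $(k-1)$-regular blocks, I note that if $B'$ were such a block of $B$, then each $v \in V(B')$ would satisfy $k-1 = d_{B'}(v) \leq d_G(v) = h(v) \leq k-1$, forcing all $G$-edges at $v$ to lie in $B'$; connectedness of $G$ then gives $V(B') = V(G)$, and $G$ would be a $2$-connected $(k-1)$-regular multigraph with $h \equiv k-1$, contradicting Lemma~\ref{lem:k-1-reg}. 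Hence Lemma~\ref{GDP'} yields
\[\alpha\sigma_h(B) + m(B) + |V_{k-1}^-(B)| - |V_{k-1}(B)| > 1 + \alpha.\]

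Next I would sum (L1) and (L2) over $v \in V(B)$ and do the edge bookkeeping. Let $E_1$ count the edges (with multiplicity) from $V(B)$ to non-low vertices of $B^*$, and let $\mu \geq 0$ denote the multi-edge excess on edges leaving $V(B)$. Because $B$ is a component of the induced subgraph ${\BB}_0$, every edge out of $V(B)$ either goes to a non-low vertex of $B^*$ or is the single cut edge $x_0^* y_0^*$ (the latter only when $x_0^* \in V(B)$ and $S_0^* \neq V(G)$); writing $\epsilon \in \{0,1\}$ for the indicator of this cut-edge case, we obtain $\sigma_h(B) = E_1 + \epsilon$, $\sum_v(d_{B^*}(v) - d_{{\BB}_0}(v)) = E_1$, and $\sum_v(d_G(v) - \wt d_G(v)) = 2m(B) + \mu$. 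Summing (L1) and (L2) therefore gives
\[\sum_{v \in V(B)} ch^*(v) = |V_{k-1}(B)| - |V_{k-1}^-(B)| - m(B) - \tfrac{1}{2}\mu - \alpha E_1.\]

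Substituting the strict Lemma~\ref{GDP'} bound into the right side then yields
\[\sum_{v \in V(B)} ch^*(v) < \alpha\epsilon - 1 - \alpha - \tfrac{1}{2}\mu \leq -1,\]
since $\alpha(\epsilon - 1) \leq 0$ and $\mu \geq 0$. The main obstacle I anticipate is verifying the absence of $(k-1)$-regular blocks for hypothesis (iii), which is not covered directly by Lemma~\ref{lem:NoKk-1}; everything else is direct bookkeeping from the charge formulas and the structural lemmas already established.
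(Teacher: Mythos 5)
Your proposal is correct and follows essentially the same route as the paper: verify hypotheses (i)--(iii) of Lemma~\ref{GDP'} for $T=B$ (using Corollary~\ref{cor:h-geq3}, the fact that $B$ is low, Lemma~\ref{Nothing}, and, for the $(k-1)$-regular case, the observation that a $(k-1)$-regular block would force $B=G$ and contradict Lemma~\ref{lem:k-1-reg}), then sum (L1)--(L2) and substitute the strict bound $\Phi_k(B)>1+\alpha$. Your edge bookkeeping with $E_1$, $\mu$, and $\epsilon$ is a slightly more explicit version of the paper's one-line inequality $\sum_{v\in V(B)}ch^*(v)\leq -\Phi_k(B)+\alpha$, but the content is identical.
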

\begin{proof} We note that  $d_{B^*}(v)=h(v)$ for $v\in V(B) \setminus \{x^*_0 \}$, and $d_{B^*}(x^*_0) = h(x^*_0) - 1$ whenever $x^*_0 \in V({\BB}_0)$.
Hence, by (L1)--(L2),
$$\sum_{v\in V(B)}ch^*(v)\leq |V_{k-1}(B)|-|V_{k-1}^-(B)|-\sum_{v\in V(B)}\left(\frac{d(v)-\wt d(v)}{2} \right ) - \alpha \sum_{v \in V(B)} \left (h(v)-d_{{\BB}_0}(v)
\right)+ \alpha $$
$$%\begin{equation}\label{nox0}
= |V_{k-1}(B)|-|V_{k-1}^-(B)|-m(B)- \alpha \sigma_h(B) + \alpha =-\Phi_k(B) + \alpha.$$
%\end{equation}
Note that the $\alpha$ term accounts for the possibility that $x^*_0 \in V(B)$, in which case $-d_{B^*}(x^*_0) = -h(x^*_0) + 1$.
%Hence, the $+\alpha$ term can be removed whenever $x_0 \not \in V(B)$.

By Corollary~\ref{cor:h-geq3}, Condition (i) of 
Lemma~\ref{GDP'} holds for $T=B$. Condition (ii) holds because the vertices in $B$ are low.
By Lemma~\ref{Nothing}, 
$B$ has no $(k-2)$-regular block. 
If $B$ is $(k-1)$-regular, then as $B$ is low and $G$ is connected, $G = B$, which contradicts Lemma~\ref{lem:k-1-reg}.
Therefore, Condition (iii) holds.
 So, by Lemma~\ref{GDP'},
$\sum_{v\in V(B)}ch^*(v)\leq -\Phi_k(B) + \alpha < -1$.
%Similarly, when $x_0 \not \in V(B)$, 
%$\sum_{v\in V(B)}ch^*(v)\leq -\Phi_k(B) < -1$.
%\sigma_h(B)+m(B)\geq |V_4(B)|+3$. This together with~\eqref{nox0} yields the lemma.
\end{proof}

Now, by (N1)--(N3), the vertices of $S_0^* \setminus {\BB}_0$ have total nonpositive charge. 
Therefore,
\[\rho_h(G) =  \sum_{v \in V(G)} ch^*(v) \leq \sum_{v \in S^*_0} ch^*(v) \leq \sum_{v \in {\BB}_0} ch^*(v).\]
As ${\BB}_0 \neq \emptyset$, Lemma~\ref{charg2} implies that 
$\sum_{v \in {\BB}_0} ch^*(v) < -1$. 
Therefore, $\rho_h(G) < -1$, and as $\rho_h(G)$ is integral, $\rho_h(G) \leq -2$.
This completes the proof.

\section{Bound for list coloring}\label{listco}

We say that $G$ is \emph{$(H,L)$-minimal} if $G$ has no $(H,L)$-coloring but every proper subgraph $G'$ of $G$ has an $(H[L(V(G'))], L\vert_{V(G')})$-coloring.

\begin{lemma}
\label{lem:HLcrit}
    Let $k \geq 5$, and let $G$ be a multigraph
    with a function $h:V(G) \to [0,k-1]$. Suppose that $G$ has no exceptional subgraph. 
    If $(H,L)$ is an $h$-cover of $G$ and $G$ is $(H,L)$-minimal, then $\rho_h(G) \leq -2$.
\end{lemma}
\begin{proof}
    Suppose for the sake of contradiction that $\rho_h(G) \geq -1$.
    As $G$ is $(H,L)$-minimal, no vertex $v \in V(G)$ satisfies $h(v) = 0$.
    By Theorem~\ref{thm:stronger}, $G$ is not $h$-minimal.
    Since $G$ is not DP $h$-colorable, $G$ has a proper subgraph $G_0$ that is $h$-minimal. 
    Since $G$ has no exceptional subgraph, Theorem~\ref{thm:stronger} implies that $\rho_{G,h}(V(G_0)) \leq -2$.
    We let $G_1 \supseteq G_0$ be a maximal subgraph of $G$ for which $\rho_{G,h}(V(G_1)) \leq -2$, and we observe that $V(G_1) \subsetneq V(G)$.
    By the $(H,L)$-minimality of $G$, $G_1$ has an $(H,L)$-coloring $\varphi$.

    Now, define $G' = G - V(G_1)$.
    We observe that for each $v \in V(G')$, if $|E_G(v,V(G_1))| = j$, then $h(v) \geq j+1$. 
    Indeed, if $j=0$, then the inequality is clear. If $j \geq 1$ and 
    $h(v) \leq j$, then $\rho_{G,h}(V(G_1) \cup \{v\}) \leq -2 + j \lambda + 1 - 2 j \lambda < -2$, contradicting the maximality of $G_1$.
    
    For each vertex $v \in V(G')$, write $h'(v) = h(v) - |E_G(v,V(G_1))|$. By our previous observation, $h'(v) \geq 1$ for each $v \in V(G')$.
    Consider a vertex subset $U \subseteq V(G')$, and let $j = |E_G(U,V(G_1))|$. By the maximality of $G_1$, 
    \[-1 \leq \rho_{G,h}(V(G_1) \cup U) \leq -2 - 2j\lambda + \rho_{G,h}(U).\]
    Rearranging, $\rho_{G,h}(U) \geq 1 + 2j\lambda$, which implies that $\rho_{G,h'}(U) \geq 1 + j(\lambda - 2)$.
    Thus,
    every subgraph $G''$ of $G'$ satisfies $\rho_{h'}(G'') \geq 1$, and hence by Theorem~\ref{thm:stronger}, $G'$ has no $h'$-critical subgraph. Therefore, $G'$ is DP $h'$-colorable, and thus 
    $\varphi$ can be extended to an $(H,L)$-coloring of $G$. This contradicts the original assumption that $G$ is $(H,L)$-minimal, completing the proof.
\end{proof}

Now, with Lemma~\ref{lem:HLcrit}, we can prove Theorem~\ref{thm:fl_intro}.

%{\PB Do we want to restate the result?}
% IC: Sure, why not. 

\begin{thm:fl_intro}
    If $k \geq 5$,  $\lambda  = \left \lceil \frac{k^2 - 7}{2k-7}\right \rceil$, and $n \geq k+2$, then $f_{\ell}(n,k) \geq \left(k-1 + \frac {1}{\lambda} \right) \frac n2 + \frac 1 {\lambda}$.
\end{thm:fl_intro}
\begin{proof}[Proof of Theorem~\ref{thm:fl_intro}]
    Let $G$ be a list $k$-critical multigraph on $n \geq k+2$ vertices and $m$ edges. As $G$ has no $k$-critical proper subgraph, $G$ is $K_k$-free.
    Furthermore, as $G$ is list $k$-critical, $G$ has no parallel edges. Therefore, $G$ has no exceptional subgraph.
    Let $L': V(G) \to \powerset{\N}$ be a list assignment for which $|L'(v)|\geq k-1$ for every $v\in V(G)$ and $G$ has no $L'$-coloring. 

    Now, we construct a $(k-1)$-cover $(H,L)$ of $G$ for which the $L'$-coloring and the $(H,L)$-coloring problems on $G$ are equivalent. As $G$ has no $L'$-coloring and is list $k$-critical, it follows that $G$ is $(H,L)$-minimal. 
    Therefore, by Lemma~\ref{lem:HLcrit}, $\rho_h(G) = (k \lambda +1 )n  -2 \lambda m \leq -2$. Rearranging, $m \geq  (k-1 + \frac {1}{\lambda} ) \frac n2 + \frac 1 {\lambda}$. 
\end{proof}

{\bf Concluding remarks.} We do not think that our bounds are sharp. 
On the other hand, it seems that $\fDP(n,k)<f(n,k)$ for many values of $n$ and $k$. 
An interesting problem is to find the asymptotics of $\frac{\fDP(n,k)}{n}$ for a fixed $k$ and $n\to\infty$.

\bibliographystyle{abbrv}
{\small
\bibliography{ref}}
 
  \end{document}